\DeclareSymbolFont{cyrletters}{OT2}{wncyr}{m}{n}
\DeclareMathSymbol{\Sha}{\mathalpha}{cyrletters}{"58}
\let\mathcal\mathscr
\numberwithin{equation}{section}
\newtheorem{theorem}{Theorem}[section]
\newtheorem{lemma}[theorem]{Lemma}
\newtheorem{corollary}[theorem]{Corollary}
\newtheorem{proposition}[theorem]{Proposition}
\theoremstyle{definition}
\newtheorem{remark}[theorem]{Remark}
\newtheorem{remarks}[theorem]{Remarks}
\newcommand{\BA}{{\mathbb {A}}}
\newcommand{\BF}{{\mathbb {F}}}
\newcommand{\BG}{{\mathbb {G}}}
\newcommand{\BL}{{\mathbb {L}}}
\newcommand{\BN}{{\mathbb {N}}}
\newcommand{\BP}{{\mathbb {P}}}
\newcommand{\BQ}{{\mathbb {Q}}}
\newcommand{\BR}{{\mathbb {R}}}
\newcommand{\BZ}{{\mathbb {Z}}}
\newcommand{\CA}{{\mathcal {A}}}
\newcommand{\CB}{{\mathcal {B}}}
\newcommand{\CC}{{\mathcal{C}}}
\newcommand{\CD}{{\mathcal {D}}}
\newcommand{\CE}{{\mathcal {E}}}
\newcommand{\CG}{{\mathcal {G}}}
\newcommand{\CH}{{\mathcal {H}}}
\newcommand{\CI}{{\mathcal {I}}}
\newcommand{\CJ}{{\mathcal {J}}}
\newcommand{\CK}{{\mathcal {K}}}
\newcommand{\CL}{{\mathcal {L}}}
\newcommand{\CN}{{\mathcal {N}}}
\newcommand{\CR}{{\mathcal {R}}}
\newcommand{\CS}{{\mathcal {S}}}
\newcommand{\CU}{{\mathcal {U}}}
\newcommand{\CV}{{\mathcal {V}}}
\newcommand{\CW}{{\mathcal {W}}}
\newcommand{\bb}{{\mathbf{b}}}
\newcommand{\cc}{{\mathbf{c}}}
\renewcommand{\phi}{\varphi}
\renewcommand{\rho}{\varrho}
\renewcommand{\epsilon}{\varepsilon}
\renewcommand{\leq}{\leqslant}
\renewcommand{\geq}{\geqslant}
\renewcommand{\bar}{\overline}
\newcommand{\bx}{\boldsymbol{x}}
\newcommand{\by}{\boldsymbol{y}}
\newcommand{\bt}{\boldsymbol{t}}
\newcommand{\bc}{\boldsymbol{c}}
\newcommand{\bd}{\boldsymbol{d}}
\newcommand{\bu}{\boldsymbol{u}}
\newcommand{\bv}{\boldsymbol{v}}
\newcommand{\bxi}{\boldsymbol{\xi}}
\newcommand{\blambda}{\boldsymbol{\lambda}}
\newcommand{\bLambda}{\boldsymbol{\Lambda}}
\newcommand{\bGamma}{\boldsymbol{\Gamma}}
\newcommand{\bsigma}{\boldsymbol{\sigma}}
\newcommand{\balpha}{\boldsymbol{\alpha}}
\newcommand{\e}{\textup{e}}
\newcommand{\aess}{\alpha_{\operatorname{ess}}}
\newcommand{\Pic}{\operatorname{Pic}}
\newcommand{\wtil}{\widetilde{w}}
\title[Strong approximation for quaternary quadratic forms]{Quantitative strong approximation for quaternary quadratic forms}
\date{\today}
\author{Zhizhong Huang \and Damaris Schindler \and Alec Shute}
\address{Institute of Mathematics, Academy of Mathematics and Systems Science, Chinese Academy of Sciences, Beijing, 100190, China}
\email{zhizhong.huang@yahoo.com}
\address{Mathematisches Institut, Georg-August-Universit\"{a}t G\"{o}ttingen, Bunsenstrasse 3--5,  37073, G\"{o}ttingen, Germany}
\email{damaris.schindler@mathematik.uni-goettingen.de}
\address{School of Mathematics, University of Bristol, Woodland Road, Bristol, UK}
\email{alec.shute@bristol.ac.uk}
\begin{document}
		\begin{abstract}
		The purpose of this article is twofold. On the one hand, we prove asymptotic formulas for the quantitative distribution of rational points on any smooth non-split projective quadratic surface. We obtain the optimal error term for the real place. On the other hand, we also study the growth of integral points on the three-dimensional punctured affine cone, as a quantitative version of strong approximation with Brauer--Manin obstruction for this quasi-affine variety.

	\end{abstract}
	\maketitle
	\tableofcontents
	
	\section{Introduction}
	Let $V\subseteq\BP_{\mathbb{Q}}^n$ be a smooth projective quadric defined by a non-degenerate integral quadratic form $F$ in $(n+1)$-variables. We always assume that $V$ has local points at all places of $\mathbb{Q}$. Let  $W\subseteq\BA^{n+1}$ be its corresponding affine cone. Let $\CV\subseteq \BP_{\BZ}^n,\CW\subseteq\BA_{\BZ}^{n+1}$ be respectively their integral models defined by $F$.	In \cite{PART1}, assuming $n\geqslant 4$, on employing the delta method in a form developed by Heath-Brown \cite{H-Bdelta}, we prove asymptotic formulas for the distribution of rational points on $V$, with optimal error terms. This is done via counting integral points of $\CW$ inside of an arbitrary adelic neighbourhood, with additional assumptions regarding the real condition.  From this we deduce the best growth rate of the ``size'' of local conditions for which equidistribution is preserved.
		
	Adopting the approach from \cite{PART1}, the goal of the present concurrent article is to focus on the non-split quaternary case, i.e. we assume that $n=3$ and $\Delta_F$, the discriminant of $F$, is not a square.
    In this case the delta method has already been applied by Lindqvist \cite{Lindqvist}  in the context of strong approximation for the affine cone and she found additional main terms which did not have any clear interpretation yet. We provide a generalisation of her work to quantitative strong approximation, we compute the additional appearing terms explicitly and we show how they are related to the Brauer-Manin obstruction. 
    This is the first instance that we are aware of in which the circle method detects the Brauer-Manin obstruction in a quantitative way with an explicit leading constant. 
	
	\subsection{Effective equidistribution on the non-split projective quadratic surface}
	Let us fix a parameter space $S$ and a family of smooth weight functions $\mathscr{C}(S)$ as in \cite{H-Bdelta}. As in \cite[(1.1)]{PART1}, we define a \textit{real zoom condition} in terms of a fixed real point $\bxi\in V(\BR)$, a neighborhood $U_0$ of $\bxi$, and a scaling parameter $R \geq 1$. For $B\gg 1$, we choose $w_{B,R}:\BR^{4}\to \BR$ (see \eqref{eq:weightBR}), constructed from a certain fixed $w\in \mathscr{C}(S)$ as in \cite{PART1}, which describes the bounded height condition depending on $B$ and the real zoom condition. It satisfies \begin{equation}\label{eq:symmetric}
		w_{B,R}(-\bx)=w_{B,R}(\bx).
	\end{equation} See \S\ref{se:poisson} for a sketch and \cite[\S2]{PART1} for more details. 
	By abuse of notation, for $P\in \BP^3(\BQ)$, $w_{B,R}(P)$ means that we choose a lift of $P$ into a primitive integer vector of $\BZ^4$ and apply $w_{B,R}$.

	Our first result is on equidistribution with only a real zoom condition, and provides an \emph{optimal} error term.  
	We consider the counting function 
	\begin{equation}\label{eq:countingfunVreal}
		\CN_{V}(w_{B,R}):=\sum_{\substack{P\in V(\BQ)}}w_{B,R}(P).
	\end{equation} 
 
	\begin{theorem}\label{thm:realzoommain}
			Assume \eqref{eq:symmetric}, $\Delta_F\neq\square$, and \begin{equation}\label{eq:Rtau}
				R^2\asymp B^{1-\tau} \quad \text{for some } 0<\tau<1.
			\end{equation} 
   Assume that $V$ has local points at all places of $\mathbb{Q}$. Then
		$$\CN_{V}(w_{B,R})=B^{2}\frac{\CI(w_R)}{2}\mathfrak{S}(V)\left(1+O_\varepsilon\left(B^{-\tau+\varepsilon}\right)\right),$$ where $\CI(w_R)$ is the ``weighted singular integral'' (see \eqref{eq:weightedsingint} for its definition) and $\mathfrak{S}(V)$ is the finite Tamagawa measure of $\CV(\widehat{\BZ})$ (à la Peyre \cite{Peyre}, with respect to the naive metric). 
	\end{theorem}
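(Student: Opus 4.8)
The plan is to run the delta-method argument of \cite{PART1} in the quaternary case, the new feature being that the circle method here produces secondary terms---those found but left unexplained by Lindqvist \cite{Lindqvist}---which in the present purely real setting must be shown to be of lower order. First I would pass from $V$ to its affine cone: since $w_{B,R}$ vanishes near the origin and satisfies \eqref{eq:symmetric}, each $P\in V(\BQ)$ has exactly two primitive integral lifts $\pm\bx$ with $F(\bx)=0$, so $\CN_V(w_{B,R})=\tfrac12\sum_{\bx\in\ZZp,\,F(\bx)=0}w_{B,R}(\bx)$. Möbius inversion removes the primitivity constraint,
\[
\CN_V(w_{B,R})=\tfrac12\sum_{d\ge 1}\mu(d)\,N\bigl(w_{B,R}(d\,\cdot\,)\bigr),\qquad N(g):=\sum_{\bx\in\BZ^4,\,F(\bx)=0}g(\bx),
\]
and since $w_{B,R}$ is supported where $\|\bx\|\asymp B$ only $d\ll B$ contribute.

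Next I would apply Heath-Brown's form of the circle method \cite{H-Bdelta} to each $N(g)$, $g=w_{B,R}(d\,\cdot\,)$, with parameter $Q\asymp B/d$: writing $\e(t)=\e^{2\pi i t}$ and $\e_q(t)=\e(t/q)$, one replaces $\mathbf{1}_{F(\bx)=0}$ by $c_QQ^{-2}\sum_{q\ge1}\sum_{a\bmod q}^{*}\e_q(aF(\bx))\,h(q/Q,F(\bx)/Q^2)$ and applies Poisson summation in $\bx$ to obtain
\[
N(g)=c_QQ^{-2}\sum_{q\ge1}q^{-4}\sum_{\bc\in\BZ^4}S_q(\bc)\,I_q(\bc),
\]
where $S_q(\bc)=\sum_{a\bmod q}^{*}\sum_{\bb\bmod q}\e_q\bigl(aF(\bb)+\bc\cdot\bb\bigr)$ and $I_q(\bc)=\int_{\BR^4}g(\bx)\,h(q/Q,F(\bx)/Q^2)\,\e_q(-\bc\cdot\bx)\,d\bx$. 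The term $\bc=\mathbf 0$ supplies the main term; the others carry the error together with the secondary terms.

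For the main term I would observe that completing the square in $\bb$ expresses $S_q(\mathbf0)$ as $q^2$ times a Ramanujan sum times the character $\bigl(\tfrac{\Delta_F}{q}\bigr)$ and a unit, so that $\sum_{q\ge1}q^{-4}S_q(\mathbf0)$ equals, up to finitely many bad-prime and archimedean factors, the special value $L(1,\chi_{\Delta_F})$ of the Dirichlet $L$-function of $\BQ(\sqrt{\Delta_F})$; this series converges (conditionally) \emph{precisely} because $\Delta_F\ne\square$ (for $\Delta_F=\square$ it diverges logarithmically---the familiar split-case $\log B$). The archimedean factor $c_QQ^{-2}\sum_q I_q(\mathbf0)$ collapses, by the defining property of $h$, to the Leray integral of $g$ over the real cone---non-zero since $V(\BR)\ne\varnothing$---and rescaling $\bx=B\by$ extracts the factor $B^2$ and identifies the remaining integral with $\CI(w_R)$ as in \eqref{eq:weightedsingint}. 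Summing the Möbius contributions over $d$ then supplies exactly the Euler-product correction that converts the affine singular series into the finite Tamagawa volume $\mathfrak{S}(V)$ of $\CV(\widehat{\BZ})$ in the naive metric, and with the overall factor $\tfrac12$ one arrives at $B^2\,\CI(w_R)\,\mathfrak{S}(V)/2$.

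The hard part will be bounding the $\bc\ne\mathbf0$ contribution by $O_\varepsilon(B^{2-\tau+\varepsilon})$, i.e.\ saving the factor $B^{-\tau+\varepsilon}$ against the main term---\emph{optimally}. The leverage is the real zoom: since $w_{B,R}$ concentrates $\bx$ near $B\bxi$ on scale $\asymp B/R$ in the two directions tangent to $V(\BR)$ (and on scale $\asymp B$ in the radial and normal directions), repeated integration by parts makes $I_q(\bc)$ negligible unless $\bc$ has size $\lesssim qR/B$ in those two directions and $\lesssim q/B$ in the other two, with explicit decay beyond; performing this integration by parts without loss is where the relation $R^2\asymp B^{1-\tau}$ is spent and where sharpness is won or lost. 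On the arithmetic side $S_q(\bc)$ is, after completing the square, $q^2$ times a Ramanujan sum in $F^{*}(\bc)$ for the adjoint form $F^{*}$, so it exhibits full cancellation when $q\nmid F^{*}(\bc)$; for $q\mid F^{*}(\bc)$---in particular for $\bc$ on the dual cone $F^{*}(\bc)=0$, which is again non-split since $\Delta_{F^{*}}\equiv\Delta_F\pmod{\square}$---it jumps to magnitude $\asymp q^3$ and carries the character $\bigl(\tfrac{\Delta_F}{q}\bigr)$, just like $S_q(\mathbf0)$. These last terms are precisely Lindqvist's secondary terms; the point is that they are again governed by $L(1,\chi_{\Delta_F})$, hence \emph{finite} because $\Delta_F\ne\square$, and combined with the zoom-decay of $I_q(\bc)$ they contribute only $O_\varepsilon(B^{2-\tau+\varepsilon})$ here. (It is the breakdown of this cancellation once finite-adelic conditions are imposed---coupling the local data at the finite places to $\chi_{\Delta_F}$---that promotes these secondary terms to a genuine extra main term, governed by the Brauer--Manin obstruction; this is the subject of the rest of the paper.) Assembling the oscillatory-integral decay with these exponential-sum estimates and summing the resulting series over $q$ (with $Q\asymp B/d$) and over $d$ gives the asserted error term. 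The one genuinely delicate point is that ``optimal at the real place'' leaves no slack, so both the stationary-phase analysis in the zoomed variables and the $\Delta_F\ne\square$ exponential-sum bounds must be carried out sharply, and the secondary terms tracked exactly rather than merely estimated crudely.
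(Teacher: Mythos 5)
Your plan follows the paper's proof essentially step for step: Möbius inversion to pass to the affine cone, Heath-Brown's $\delta$-method with Poisson summation, the $\bc=\mathbf{0}$ term giving $L(1,\psi_F)$ times local densities that Möbius summation over $d$ converts into $\mathfrak{S}(V)$, and the $F^*(\bc)=0$ dual-cone Poisson variables disposed of by the cancellation (Pólya--Vinogradov bound for the non-principal character $\psi_F$) combined with the $|\bd|\gg B^{\tau}|\bc|$ decay coming from the real zoom. The only mild imprecision is that what is actually used is the bounded-partial-sum estimate $\sum_{q\le X}S_q(\bc)/q^3\ll_\varepsilon X^\varepsilon$ (the ``double Kloosterman refinement'' separating $q$ coprime to $2\Delta_F$ from the complementary part) rather than mere convergence of the full series, and the absolute size of the $\bc\neq\mathbf{0}$ error is $O_\varepsilon(B^{2-\tau+\varepsilon}/R^2)$ rather than $O_\varepsilon(B^{2-\tau+\varepsilon})$; but since you immediately restate this as a relative saving of $B^{-\tau+\varepsilon}$, your argument is in accord with the paper's.
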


	 For $L\in\BN$, we let  $\bLambda=(\bLambda_p)_{p\mid L}$, where $\bLambda_p\in \CV(\BZ_p)$ for every $p\mid L$. The pair $(L,\bLambda)$ defines a finite adelic open subset $\CD(L,\bLambda)\subset\CV(\widehat{\BZ}):=\prod_p \CV(\BZ_p)$ (see \cite{PART1}), and gives rise to a \textit{$p$-adic zoom condition} for each $p \mid L$. We now describe our second result of counting with only $p$-adic zoom conditions (i.e. we take $R=1$).
	 We shall write $$w_B=w_{B,1}$$ and consider 	\begin{equation}\label{eq:countingfunVpadic}
	 	\CN_{V}(w_{B};(L,\boldsymbol{\Lambda})):=\sum_{\substack{P\in V(\BQ)\\ P\in \CD(L,\bLambda)}}w_{B}(P).
	 \end{equation} 
	\begin{theorem}\label{thm:padiczoommain}
		Assume \eqref{eq:symmetric}, $\Delta_F\neq\square$, and
		\begin{equation}\label{eq:tauLdelta}
	    L^2\asymp B^{1-\tau}	\text{ for some }	\frac{1}{2}<\tau< 1.
		\end{equation} Assume that $V$ has local points at all places of $\mathbb{Q}$. Then we have
		$$\CN_{V}(w_B;(L,\bLambda))= B^2\frac{\CI(w)}{2}	\mathfrak{S}_{L,\bLambda}(\CV)\left(1+O_\varepsilon\left(B^{-2\tau+1+\varepsilon}\right)\right),$$
		where $\mathfrak{S}_{L,\bLambda}(\CV)$ is the finite Tamagawa measure of $\CV(\widehat{\BZ})$ regarding the local condition $(L,\bLambda)$. 
	\end{theorem}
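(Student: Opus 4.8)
The plan is to adapt the delta-method analysis of Theorem~\ref{thm:realzoommain} (with $R=1$, so $w_R = w$) to the situation where the real zoom is switched off but $p$-adic zoom conditions at the primes dividing $L$ are imposed. The starting point is the Poisson/delta-method expansion of $\CN_V(w_B;(L,\bLambda))$ as a sum over a modulus $q$ and a dual vector $\cc\in\BZ^4$ of a product of local exponential-sum factors times an oscillatory integral; since $V$ is non-split quaternary, one expects the $\cc=0$ term to produce the expected leading constant $B^2 \CI(w)\mathfrak{S}_{L,\bLambda}(\CV)/2$, and, following Lindqvist, one expects additional "secondary" main terms of the same order coming from dual vectors $\cc$ proportional to the adjoint form, which must be identified with the (real-place trivial) contribution encoded in $\mathfrak{S}_{L,\bLambda}$. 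The effect of the $p$-adic zoom condition $(L,\bLambda)$ is twofold: it restricts the integer points counted to a congruence class modulo $L$ (after a dilation, essentially $\bx \equiv \bLambda \bmod L$ on the relevant $p$-adic components), and it rescales the height window, which is why \eqref{eq:tauLdelta} fixes $L^2 \asymp B^{1-\tau}$.

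The key steps, in order, would be: (1) Insert the indicator of $\CD(L,\bLambda)$ via its description as a union of congruence classes modulo $L$, and open it up so that the counting function becomes a (normalised) sum over residues of a delta-method count of integer points on $W$ lying in an arithmetic progression of modulus $L$ inside a box of size $\asymp B/L^{?}$ — the precise bookkeeping is exactly as in \cite{PART1,Lindqvist}. (2) Apply the delta-method expansion; the new modulus interacts multiplicatively with $L$, so the local factor at each $p\mid L$ is a "twisted" quadratic exponential sum (Gauss/Salié-type) which must be evaluated explicitly, while the factors at $p\nmid L$ are the same as in the unramified case. (3) Isolate the $\cc=0$ term: the product of local densities (including the twisted factors at $p\mid L$) reassembles into $\mathfrak{S}_{L,\bLambda}(\CV)$, and the archimedean integral gives $\CI(w)$, producing the claimed main term with its factor $1/2$ coming from \eqref{eq:symmetric} (the two primitive lifts $\pm\bx$ of each $P$). (4) Handle the secondary terms from $\cc$ proportional to the adjoint: as in Theorem~\ref{thm:realzoommain}, these are shown either to vanish (because the relevant character sum over the progression cancels, using $\Delta_F\neq\square$) or to be absorbed into the Brauer--Manin-corrected density — this is where the non-split hypothesis is essential. (5) Bound the tail $\cc\neq 0$ by the usual combination of Poisson summation in $\cc$, pointwise and averaged bounds for the complete exponential sums $S_q(\cc)$ (Weil-type square-root cancellation, with the extra $L$-aspect controlled by multiplicativity), and stationary-phase/decay estimates for the oscillatory integral $I_q(\cc)$ on the support of $w_B$; optimising against the $q$-sum and the constraint \eqref{eq:tauLdelta} yields the error term $O_\varepsilon(B^{-2\tau+1+\varepsilon})$.

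The main obstacle I expect is step (4) combined with the quantitative tail estimate in step (5) in the presence of the modulus $L$. For the secondary/Brauer--Manin terms one must track how the level-$L$ twist affects the exponential sums attached to dual vectors $\cc$ on the singular locus (the adjoint direction), and verify that the explicit leading constant one gets matches the finite Tamagawa measure $\mathfrak{S}_{L,\bLambda}(\CV)$ including the correct $p$-adic local factors at $p\mid L$; any discrepancy would have to be interpreted as a genuine Brauer--Manin contribution, and reconciling this with Peyre's constant is delicate. On the error-term side, the difficulty is that the admissible range of $\tau$ shrinks to $(1/2,1)$ precisely because the $L$-twisted exponential sums are larger than their unramified counterparts, so one needs sharp (not merely square-root-in-$q$) bounds uniformly in $L$, and the optimisation is tighter than in the real-zoom case; making the constants in \eqref{eq:tauLdelta} work out, rather than just an unspecified power saving, is the crux. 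All remaining estimates — smoothness of $w$, support bounds, the geometry-of-numbers input for the progression count — are routine given \cite{PART1}.
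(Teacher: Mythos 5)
Your high-level plan is in the right spirit, but it is vague precisely at the two points that carry the proof in the paper, and one of your alternatives in step~(4) would take you off in the wrong direction.

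First, the paper does not count on $V$ directly: it works in three stages $\CW \to \CW^o \to V$. Theorem~\ref{thm:Llambda} counts on the affine cone $\CW$ with congruence conditions and produces, besides the expected main term $\widetilde{\mathfrak{S}}_{L,\bGamma}(\CW)\CI(w_0)B^2$, an extra secondary main term $\CK_{L,\bGamma}(\widehat{w})B^2$ coming from $\bc\neq\boldsymbol{0}$ with $F^*(\bc)=0$; this is the term you correctly anticipate. The passage $\CW\to\CW^o$ (M\"obius inversion over $d$) keeps a term $\CL(D_0)\CK_{L,\bGamma}(\widehat{w})$, and only the final passage $\CW^o\to V$, which averages over $\gamma\in(\BZ/L\BZ)^\times$, makes it disappear. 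The mechanism is the ``flipping'' identity $\CK_{L,\gamma\bGamma}(\widehat{w})=\psi_F(\gamma)\CK_{L,\bGamma}(\widehat{w})$ (Lemma~\ref{le:CKflip}), so the $\CK$-contribution to $\CN_V$ is proportional to $\sum_{\gamma\in(\BZ/L\BZ)^\times}\psi_F(\gamma)=0$ whenever $\varepsilon_{\Delta_F}\mid L$ (and $\CK$ itself vanishes otherwise). Your phrasing ``either to vanish\,\ldots or to be absorbed into the Brauer--Manin-corrected density'' hedges between two outcomes; the second alternative is what happens in Theorem~\ref{thm:HLWo} for integral points on $\CW^o$, but for the projective count on $V$ only the first alternative occurs, and you need to see why (the character average), not just assert it. Without isolating this cancellation over $(\BZ/L\BZ)^\times$ you cannot get a clean $\mathfrak{S}_{L,\bLambda}(\CV)$ on the right-hand side.

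Second, a concrete technical gap: once $L>1$ and $\bc\neq\boldsymbol{0}$, the exponential sum $S_{q,L,\blambda}(\bc)$ is \emph{not} multiplicative in $q$, so the ``sharp bounds uniformly in $L$'' and the Kloosterman-refinement step you invoke in~(5) are not available as stated. The paper recovers quasi-multiplicativity by expanding the $q_2$-component $\CS_{q_2,L,\blambda}(q_1;\bc)$ in Dirichlet characters modulo~$L$ (definition~\eqref{eq:CAchic}), which converts the $q_1$-dependence into $\chi(q_1)\CR(q_1;\bc)$ and permits a partial-summation/Kloosterman-refinement argument character by character (Propositions~\ref{prop:qsumS} and~\ref{prop:cneq0qsum}). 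That device is the crux and is missing from your outline. Finally, the shape of the relative error $O_\varepsilon(B^{-2\tau+1+\varepsilon})$ and the restriction $\tau>\tfrac12$ do not come from the exponential sums being ``larger than their unramified counterparts''; they come from dividing the absolute error $O_\varepsilon(B^{2-\tau+\varepsilon})$ of Lemma~\ref{prop:Vcount} by the main term, together with the lower bound $\mathfrak{S}_{L,\bLambda}(\CV)\gg_\varepsilon L^{-2-\varepsilon}$ (Proposition~\ref{prop:TamagawaLLambda}) and $L^2\asymp B^{1-\tau}$, which forces $\tau>1-\tau$.
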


\begin{remark}
		Theorems \ref{thm:realzoommain} and  \ref{thm:padiczoommain} have the following consequence on 
	the ``\emph{essential approximation constant}'' $\aess^{\nu}$ (see \cite[Appendix A]{PART1} for more details): For any real point $\boldsymbol{\xi}\in V(\BR)$, we have
	$$\aess^{\BR}(\boldsymbol{\xi})\leqslant 2;$$ 	For any non-archimedean place $\nu$ and any $\nu$-adic point $\xi_\nu\in V(\BQ_\nu)$, we have $$\aess^{\nu}(\xi_\nu)\leqslant 4.$$ 
	Note that in \cite[Appendix A]{PART1} we show that $\aess^{\nu}(\xi)=2$ when $\xi\in V(\mathbb{Q})$. So Theorem \ref{thm:realzoommain} is optimal if $\bxi$ is a rational point. It would be very interesting to determine the exact value of such an essential constant for non-archimedean places. This is related to the best error term one can obtain in Theorem \ref{thm:padiczoommain}.
\end{remark}

	\subsection{Counting integral points on the punctured affine cone}
	The second goal of this article is to study the asymptotic growth of integral points on the punctured affine cone $\CW^o:=\CW\setminus\overline{\boldsymbol{0}}\subseteq\BA^4_{\BZ}$. In stark contrast to the higher dimensional cases, under the assumption that $\Delta_F\neq\square$, there can be a Brauer--Manin obstruction to strong approximation on $\CW^o$. 
	
	On $\CW^o(\widehat{\BZ}):=\prod_p \CW^o(\BZ_p)$, we can associate a (normalised) Tamagawa measure, denoted by $\omega_f$ locally given by the ``modified singular series'' (see \eqref{eq:omegaf} and see also Appendix A). Let $W_i,i\in I$ be the connected components of the real locus $W^o(\BR)$. If $\Delta<0$ (resp. $\Delta>0$) we then have $\#I=2$ (resp. $\# I=1$). (See Lemma \ref{lemcomp}.)
    
	We shall establish in \S\ref{se:BMCK} the following result.
	\begin{theorem}\label{thm:HLWo}
		Assume $\Delta_F\neq\square$. Then for every $i\in I$, there exists a locally constant function $\Xi_{W_i}:\CW^o(\widehat{\BZ})\to \{0,2\}$ such that for any  non-empty  open compact $\CE\subseteq \CW^o(\widehat{\BZ})$, if the support of $w_B$ is contained in  $W_i$, we have as $B\to\infty$,	$$\sum_{\bx\in \CW^o(\BZ)\cap (W_i\times\CE)} w_B(\bx)\sim \CI(w)\left(\int_{\CE}\Xi_{W_i} \operatorname{d}\omega_f\right) B^2.$$ 
        
	\end{theorem}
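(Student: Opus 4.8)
The plan is to run Heath-Brown's delta method \cite{H-Bdelta} in the form used in \cite{PART1} and \cite{Lindqvist}, and to read the Brauer--Manin obstruction off the \emph{secondary} main term that the hypothesis $\Delta_F\neq\square$ forces into the asymptotics.

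\emph{Step 1: reduction to a weighted count of primitive zeros.} An integral point of $\CW^o$ is a section of $\CW^o\to\operatorname{Spec}\BZ$, i.e.\ a primitive $\bx\in\BZ^4$ with $F(\bx)=0$; the condition $\widehat{\bx}\in\CE$ already encodes primitivity, since $\CW^o(\BZ_p)$ consists of $p$-primitive zeros of $F$, and on top of this it only imposes finitely many congruences, while membership of $W_i$ is a single real sign condition. So $\sum_{\bx\in\CW^o(\BZ)\cap(W_i\times\CE)}w_B(\bx)$ is a weighted count of primitive zeros of $F$ in prescribed congruence classes, exactly of the type treated in \cite{PART1}, and for non-split quaternary forms in \cite{Lindqvist}. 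A standard M\"obius inversion releases the primitivity condition, the convergence factors being absorbed into the normalisation of $\omega_f$, so it suffices to analyse the weighted congruence count $\sum_{\bx}w_B(\bx)\mathbf{1}_{\CE}(\bx)$ via the delta method.

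\emph{Step 2: the delta-method expansion and the secondary main term.} The delta method expands the count as a sum over moduli $q$ of weighted complete Gauss sums $S_q=\sum_{(a,q)=1}\sum_{\bx\bmod q}\e(aF(\bx)/q)$ against oscillatory integrals $I_q$, with leading size $B^2$. For a quaternary form, $S_q$ carries, for $q$ coprime to $2\Delta_F$, the Jacobi symbol $\left(\tfrac{\Delta_F}{q}\right)$; this causes the merely conditional convergence of the $q$-sum and, as Lindqvist observed, produces \emph{two} main terms of the same order $B^2$. Completing the local analysis at the primes $p\mid 2\Delta_F$ and at the real place --- where the archimedean integral over the (singular) affine cone sees the component $W_i$ --- one obtains $$\sum_{\bx\in\CW^o(\BZ)\cap(W_i\times\CE)}w_B(\bx)=\CI(w)\,B^2\bigl(\sigma(\CE)+\sigma'(\CE)\bigr)+o(B^2),$$ where $\sigma(\CE)=\int_\CE\operatorname{d}\omega_f$ is the ordinary (modified) singular series and the secondary term factors into local contributions, $$\sigma'(\CE)=\epsilon_\infty(W_i)\Bigl(\prod_p\epsilon_p\Bigr)\sigma(\CE),\qquad \epsilon_v(\bx_v)=(-1)^{2\operatorname{inv}_v\mathcal A(\bx_v)}\in\{\pm1\}.$$ Here $\mathcal A\in\operatorname{Br}(W^o)$ is the quaternion class built from the quadratic field $\BQ(\sqrt{\Delta_F})$ that emerges from the Jacobi-symbol computation (a Hilbert symbol $(\Delta_F,\ell)_v$ for a suitable rational function $\ell$ on $W^o$), and $\epsilon_\infty(W_i)$ is constant on each real component $W_i$ because $\operatorname{inv}_\infty\mathcal A$ is locally constant on $W^o(\BR)$. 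The error $o(B^2)$ is in fact a power saving, as for any quaternary form with fixed congruence conditions.

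\emph{Step 3: reciprocity, and identification with the Brauer--Manin set.} Since $\mathcal A$ has order $2$, every $\operatorname{inv}_v\mathcal A$ lies in $\{0,\tfrac12\}$, so for each adelic point $(\bx_v)$ with $\bx_\infty\in W_i$ one has $\epsilon_\infty(W_i)\prod_p\epsilon_p(\bx_p)=(-1)^{2\sum_v\operatorname{inv}_v\mathcal A(\bx_v)}$, which equals $+1$ precisely when $\sum_v\operatorname{inv}_v\mathcal A(\bx_v)=0$, i.e.\ precisely on the Brauer--Manin set. Hence $\sigma(\CE)+\sigma'(\CE)=\int_\CE\Xi_{W_i}\operatorname{d}\omega_f$ with $$\Xi_{W_i}(\widehat{\bx}):=1+\epsilon_\infty(W_i)\prod_p\epsilon_p(\widehat{\bx})=2\cdot\mathbf{1}\bigl[(W_i,\widehat{\bx})\in(\CW^o(\BA_\BZ))^{\operatorname{Br}}\bigr]\in\{0,2\},$$ which is locally constant on $\CW^o(\widehat{\BZ})$; substituting into Step 2 gives the asymptotic. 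Two auxiliary facts feed into this: that $\mathcal A$ generates $\operatorname{Br}(W^o)/\operatorname{Br}(\BQ)$ (so that this $\Xi_{W_i}$ really is the indicator of the Brauer--Manin set, not of some coarser condition), and that the product formula invoked above is exactly Hilbert reciprocity for $\mathcal A$.

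\emph{Main obstacle.} The crux is Step 2: isolating the secondary main term in closed form and showing it factors as a product of \emph{local} quaternion evaluations. This requires a uniform treatment of the sums $S_q$ including the bad primes and $p=2$, an explicit identification of each $p$-adic factor with the Hilbert symbol $(\Delta_F,\cdot)_p$, and a fine enough evaluation of the archimedean oscillatory integral near the vertex to exhibit its dependence on $W_i$. Lindqvist's analysis gives the shape of the two main terms; the genuinely new work is making every local factor explicit and recognising the resulting product as the evaluation pairing against a Brauer class, which is precisely what turns the secondary term into the $\{0,2\}$-valued density above.
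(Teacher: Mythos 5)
Your skeleton correctly identifies the three ingredients: the delta method produces a secondary main term of the same order $B^2$, the relevant Brauer class is the quaternion algebra built from $\BQ(\sqrt{\Delta_F})$, and Hilbert reciprocity converts a product of local signs into the Brauer--Manin indicator, which is precisely how the paper assembles $\Xi_{W_i}\in\{0,2\}$. But the load-bearing claim of your Step~2 --- that the secondary term factorizes as $\sigma'(\CE)=\epsilon_\infty(W_i)\bigl(\prod_p\epsilon_p\bigr)\sigma(\CE)$ with each $\epsilon_v$ a local Hilbert-symbol evaluation --- is asserted rather than proved, and you yourself flag it as the ``main obstacle.'' In the actual expansion the secondary term is $\CK_{L,\bGamma}(\widehat{w})$ from \eqref{eq:CKc}: an infinite sum over nonzero Poisson variables $\bc$ with $F^*(\bc)=0$, each weighted by a $2\Delta_F L$-adic quantity $\CB_{L,\blambda}(\bc)$ and an archimedean oscillatory integral $\CJ_L(\widehat{w};\bc)$. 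That $\bc$-sum has no visible Euler-product structure, and the real integral $\CJ_L$ does not obviously separate into an $\infty$-adic Hilbert symbol; nothing in Lindqvist's analysis delivers the factorization you need, and the paper explicitly declines to attempt it on grounds of intractability (Remark~\ref{rmk:deltanotdiv}). As written, the proposal is therefore missing its central step.

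The paper closes the gap by a different route that avoids computing $\CK_{L,\bGamma}(\widehat{w})$ directly. It establishes the flipping identity $\CK_{L,\gamma\bGamma_0}(\widehat{w})=\psi_F(\gamma)\,\CK_{L,\bGamma_0}(\widehat{w})$ (Lemma~\ref{le:CKflip}) and, independently, uses Hilbert reciprocity for $(\Delta_F,g)$ to show that $W_j\times\CE_{\CW^o}(L,\gamma\bGamma_0)$ lies in, or misses, the Brauer--Manin set exactly according to whether $\psi_F(\gamma)=1$ or $-1$ (Lemma~\ref{le:gammaBr}). When $\psi_F(\gamma)=-1$ the neighbourhood contains no integral point at all --- only the trivial direction of the Brauer--Manin obstruction is used here, not the Colliot-Th\'el\`ene--Xu approximation theorem --- so $\CN_{\CW^o}(\wtil_B;(L,\gamma\bGamma_0))=0$ identically. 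Comparing this exact vanishing with the asymptotic of Lemma~\ref{le:Wocount} then \emph{solves} for $\CK_{L,\bGamma_0}(\widehat{w})$ in closed form (Corollary~\ref{co:CK}); the local factorization you wanted drops out as a corollary rather than being the input. If you wish to keep a direct-computation approach, you would have to evaluate the $\bc$-sum and recognize it as a local product, which is precisely the work the paper's indirect argument is designed to circumvent.
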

Theorem \ref{thm:HLWo} implies a quantitative version of the Brauer--Manin obstruction to strong approximation of integral points on $\CW^o$.
According to the authors’ knowledge, this is the first instance where the circle method furnishes asymptotic formulas that encode information from the Brauer--Manin obstruction.\par
Note that Theorem \ref{thm:HLWo} also encodes a failure of strong approximation due to a Brauer-Manin obstruction in cases where the locally constant function $\Xi_{W_i}$ is equal to zero. This shows that an analogue of Corollary 3 in \cite{H-Bdelta} for primitive integer vectors in a strict sense is not true due to the possible existence of a Brauer-Manin obstruction (compare also with the paragraph after Corollary 3 in \cite{H-Bdelta} where it is claimed that a generalisation would be possible.)

\begin{remarks}
	\hfill\begin{enumerate}
		\item 	 In the terminology of Borovoi--Rudnick \cite{Borovoi-Rudnick}, up to the weighted singular integral, (an extension of) Theorem \ref{thm:HLWo} shows that (see also \cite[Proposition 2.4]{Cao-Huang2}):
		\emph{The quasi-affine variety $W^o$ is relatively Hardy--Littlewood (\cite[p. 39]{Borovoi-Rudnick}) with density function $\Xi_{W_i}$ for each connected component $W_i$.}
		
		\item 	Applying results on the geometric sieve for quadrics (\cite[Theorem 1.3]{Browning-HB} or a variant of \cite[Theorem 1.6]{Cao-Huang2}), the following stronger equidistribution result holds (see \cite[Definition 1.2]{Cao-Huang2}):
		\emph{The quasi-affine variety $W^o$ satisfies arithmetic purity of the Hardy--Littlewood property.}
		Equivalently, every open subset of $W^o$ whose complement has codimension at least two is also relatively Hardy--Littlewood.
	\end{enumerate}

\end{remarks}

\subsection{Methods and further comments}

Heath-Brown \cite{H-Bdelta} pioneers the counting for quaternary quadratic forms, applying his $\delta$-method. Our investigation also makes use of the $\delta$-method, however, to minimize the error terms in Theorems \ref{thm:realzoommain} and  \ref{thm:padiczoommain}, we need to refine a number of arguments in \cite{H-Bdelta,PART1}, and in particular, a ``double Kloosterman refinement'' is brought into play.  The  finite Tamagawa measure on $\CV(\widehat{\BZ})$ features a different form of Eulerian product, due to the necessity of introducing convergence factors. 
We also prove the accordance of the leading constant with Peyre's prediction (see Proposition \ref{prop:TamagawaLLambda}), which does not seem to have been addressed before in the literature.

Work of Lindqvist \cite{Lindqvist} first addresses counting with extra congruence conditions. One has to be more careful when handling quadratic exponential sums as the multiplicativity is lost. In \cite{Lindqvist} it is observed that, after executing the Poisson summation and introducing the Poisson variable $\bc\in\BZ^4$, the leading term does not always agree with the contribution from $\bc=\boldsymbol{0}$. A mysterious main term $\CK$ (see \eqref{eq:CKc}) appears which is expressed as an infinite sum over certain $\bc\neq \boldsymbol{0}$ with $F^*(\bc)=0$.  We are able to completely determine the value of $\CK$, and show that its appearance should be thought of as the ``defect'' of strong approximation (see Remark \ref{rmk:measureint}), thereby completing the work \cite{Lindqvist}.

We have chosen to establish the main theorems only with respect to a specific choice of smooth weights. This is solely for technical simplicity. The same procedure of ``removing smooth weights'' can be applied as in \cite[\S7.2]{PART1}, which would only affect the size of the power savings in Theorems \ref{thm:realzoommain} and \ref{thm:padiczoommain} and not the allowable growth rate of $R$ and $L$ for which the asymptotic formulas are obtained. Obtaining a joint version with both real and finite local conditions as in \cite[Theorem 1.1]{PART1} is also possible.

	\subsection{Notation and conventions} Throughout this article we assume $n=3$ and $\Delta_F\neq\square$. We assume that the quadratic form $F$ has even non-diagonal terms (which we can do after scaling), in particular we then have $\Delta_F\in \mathbb{Z}$, where $\Delta_F$ is the determinant of the corresponding matrix defining $F$.
	
	The symbol $\chi$ denotes a Dirichlet character, the modulus of which is denoted by $|\chi|$. We write $\chi^c$ for its conjugate, so as not to conflict with the notation of multiplicative inverse. We write $$\BL(s,\chi):=\sum_{n=1}^{\infty}\frac{\chi(n)}{n^s}$$ for the associated Dirichlet series.
	For $n\in\BN$, $\chi_0[n]$ denotes the principal Dirichlet character modulo $n$.
	We define the (non-principal) Dirichlet character $\psi_F(\cdot):=\left(\frac{4\Delta_F}{\cdot}\right)$ and write $\widetilde{\psi_F}$ for the primitive character which induces $\psi_F$.
	As usual we write $\e(x):=\exp\left(2\pi i x\right)$ and $\e_q(x):=\exp\left(\frac{2\pi i x}{q}\right)$. Let $\phi$ be the Euler totient function.
	We define the arithmetic functions
	\begin{equation}\label{eq:theta}
		\theta_1(n):=\prod_{p\mid n}\left(1-\frac{1}{p}\right);\quad 
	 	\theta_2(n):=\theta_1(n)\prod_{p\nmid n}\left(1-\frac{1}{p^2}\right).
	 \end{equation}
	 We have $\phi(n)=n\theta_1(n)$.

		\section{Set-up}
		
	Recall the following identity about the $\delta$-symbol in Heath-Brown's work \cite[Theorem 1]{H-Bdelta}: Let $n$ be an integer, and define $\delta(n)$ to be $1$ if $n=0$ and zero otherwise. Then there exists a ``nice'' function $h:\BR_{>0}\times \BR\to\BR$ such that for every $Q>1$, we have $$\delta(n)=\frac{C_Q}{Q^2}\sum_{q=1}^{\infty}\sum_{\substack{a\bmod q\\(a,q)=1}}\textup{e}_q\left(an\right)h\left(\frac{q}{Q},\frac{n}{Q^2}\right)$$ for a certain $C_Q=1+O_N(Q^{-N})$.
	
	\subsection{The congruence condition and Poisson summation}\label{se:poisson}
		For $w:\BR^4\to\BR_{\geqslant 0}$ a smooth weight function of class $\CC(S)$, and for $L\in\BN$ and $\bGamma\in \CW^o(\BZ/L\BZ)$,
		we consider the counting function (with respect to $\CW$)
		\begin{equation}\label{eq:countingW}
			\CN_{\CW}(w;(L,\bGamma)):=\sum_{\substack{\bx\in\BZ^4: F(\bx)=0\\\bx\equiv \bGamma\bmod L}}w(\bx).
		\end{equation}
		
		We choose $\blambda=\blambda(\bGamma)\in \BZ^4$ a lift of $\bGamma$ such that $|\blambda|\leqslant L$ and $\gcd(L,\lambda_0,\lambda_1,\lambda_2,\lambda_3)=1$. Define the affine linear polynomial 
		$$H_{\boldsymbol{\lambda},L}(\by):=\frac{F(\boldsymbol{\lambda})}{L}+\nabla F(\boldsymbol{\lambda})\cdot\by.$$
		We observe that for $\by\in\BZ^4$,
		\begin{equation}\label{eq:equivdiv}
			H_{\boldsymbol{\lambda},L}(\by)\equiv 0\bmod L\Leftrightarrow F(L\by+\boldsymbol{\lambda})\equiv 0\bmod L^2.
		\end{equation}
Then a standard manipulation via Poisson summation as in \cite[\S2.4]{PART1} yields
	\begin{equation}\label{eq:poisson}
		\begin{split}
		\CN_{\CW}(w;(L,\bGamma))&=\sum_{\substack{\by\in\BZ^4\\ L\mid H_{\boldsymbol{\lambda},L}(\by)}} w(L\by+\boldsymbol{\lambda})\delta\left(\frac{F(L\by+\boldsymbol{\lambda})}{L^2}\right)\\ &=\frac{C_Q}{Q^2}\sum_{q=1}^{\infty}\sum_{\bc\in\BZ^4}\frac{S_{q,L,\blambda}(\bc)I_{q,L,\blambda}(w;\bc)}{(qL)^4},		\end{split}
	\end{equation} where for every $q,\bc$, we define the \emph{weighted oscillatory integral}
	\begin{equation}\label{eq:Iqc1}
		I_{q,L,\blambda}(w;\bc):=\int_{\BR^4}w(L\by+\boldsymbol{\lambda})h\left(\frac{q}{Q},\frac{F(L\by+\boldsymbol{\lambda})}{L^2Q^2}\right)\e_{qL}\left(-\bc\cdot\by\right)\operatorname{d}\by,
	\end{equation} and the \emph{twisted quadratic exponential sums} \begin{equation}\label{eq:Sqc}
		S_{q,L,\blambda}(\bc):=\sum_{\substack{a\bmod q\\(a,q)=1}}\sum_{\substack{\bsigma\in(\BZ/qL\BZ)^4\\ H_{\boldsymbol{\lambda},L}(\boldsymbol{\sigma})\equiv \boldsymbol{0}\bmod L}}\textup{e}_{qL}\left(a\left(H_{\boldsymbol{\lambda},L}(\boldsymbol{\sigma})+LF(\boldsymbol{\sigma})\right)+\bc\cdot\bsigma\right).
	\end{equation}

\subsection{The real condition and the oscillatory integrals}

We pause to describe briefly the precise form of the weight function which we shall use.
Upon fixing a real point $\bxi\in V(\BR)$, by means of Witt's cancellation theorem, we do a linear change of variables $\bx\mapsto \bt$ using a matrix $M\in\operatorname{GL}_{4}(\BR)$ such that $\bxi\mapsto[1:0:0:0]$ and \begin{equation}\label{eq:FWitt}
	\widetilde{F}(\bt):=F(M^{-1}\bt)=t_0t_1 + F_2(t_2,t_3),
\end{equation} where $F_2$ is a certain non-degenerate binary quadratic form. Then $\left(\frac{t_2}{t_0}(\bx),\frac{t_3}{t_0}(\bx)\right)$ forms a local coordinate system of $V(\BR)$ around $\bxi$. Taking the real zoom condition above into account, we therefore let   
\begin{equation}\label{eq:weightBR}
	w_{B,R}(\bx)=w_1\left(R\left(\frac{t_2}{t_0}(\bx),\frac{t_3}{t_0}(\bx)\right)\right)w_2\left(\frac{R^2}{B^2}F(\bx)\right)w_3\left(\frac{t_0(\bx)}{B}\right),
\end{equation} for certain fixed $w_1:\BR^2\to\BR;w_2,w_3:\BR\to\BR$, all of which are of class $\mathscr{C}(S)$ and $w_3$ satisfying $w_3(-x)=w_3(x)$ as well as $w_3(0)\neq 0$.
Therefore $w_{B,R}(\bx)=w_{B,R}(-\bx)$.

We then choose \begin{equation}\label{eq:Q}
	Q:=\frac{B}{LR}
\end{equation} as in \cite[\S2.5]{PART1}.

We next execute a preliminary manipulation for $I_{q,L,\blambda}(w_{B,R};\bc)$, as is done in \cite[\S3.1]{PART1}:
	\begin{equation}\label{eq:IJ}
		I_{q,L,\blambda}(w_{B,R};\bc)=\frac{\e_{qL^2}(\bc\cdot\blambda)}{|\det M|}\left(\frac{B}{LR}\right)^4 \widehat{\CI}_{q}(\widehat{w};\boldsymbol{d}),
	\end{equation} 
where
\begin{equation}\label{eq:CIq}
	\widehat{\CI}_{q}(\widehat{w};\boldsymbol{d}):=\int_{\BR^4} \widehat{w}(\bu)h\left(\frac{q}{Q},\widetilde{F}(\bu)\right)\e_q\left(-\bd\cdot\bu\right)\operatorname{d}\bu,
\end{equation} where
\begin{equation}\label{eq:hatw}
	\widehat{w}(\bu):=w_1\left(\frac{u_2}{u_0},\frac{u_3}{u_0}\right)w_2\left(\widetilde{F}(\bu)\right)w_3\left(u_0\right)
\end{equation}
depending only on the ``constant part'' $w_1,w_2,w_3$ of $w_{B,R}$, and we associate the vector $\boldsymbol{d}=\boldsymbol{d}(B,R,L;\cc)\in\BR^4$ to $\bc$ by \begin{equation}\label{eq:d}
	d_0=\frac{B}{L^2}\widetilde{c_0},\quad d_1=\frac{B}{L^2R^2}\widetilde{c_1},\quad d_2=\frac{B}{L^2R}\widetilde{c_2},\quad d_3=\frac{B}{L^2R}\widetilde{c_3},
\end{equation} with \begin{equation}\label{eq:tildec0}
\widetilde{\bc}=(\widetilde{c_0},\cdots,\widetilde{c_3}):=(M^{-1})^t\bc\in\BR^4.
\end{equation} 

When the real condition is trivial (i.e. $R=1$), we just let $M$ be the identity.\par
In order to see the effect of the  Brauer--Manin obstruction at the real place, we shall work with a slightly different shape of weight functions. Let $w_0: \mathbb{R}^4\rightarrow \mathbb{R}$ be a smooth compactly supported weight function such that the support of $w_0$ is contained in a connected component of $W^o(\mathbb{R})$. We define $\wtil_B: \mathbb{R}^4\rightarrow \mathbb{R}$ by
 \begin{equation}\label{eq:weightpm}
	\wtil_B(\bx):=w_0\left(\frac{\bx}{B}\right).
\end{equation} 
The change of variables $\bu=\frac{L\by+\blambda}{B}$ yields
$$I_{q,L,\blambda}(\wtil_B;\bc)=\e_{qL^2}(\bc\cdot\blambda)\widehat{\CI}_{q}(w_0;\boldsymbol{d})\left(\frac{B}{L}\right)^4,$$ where $R=1$, $M$ is the identity matrix and $\boldsymbol{d}$ is defined as before in \eqref{eq:d}.\par
We next record Heath-Brown's ``simple estimates'' and ``harder estimates'' \cite[\S7\S8]{H-Bdelta}. We abbreviate in what follows all the functions \eqref{eq:hatw} and $w_0$ by $\widehat{w}$. For $\bc=\boldsymbol{0}$ we have
\begin{lemma}[\cite{H-Bdelta} Lemma 16]\label{le:Ic=0}
	We have $$\frac{\partial\widehat{\CI}_{q}(\widehat{w};\boldsymbol{0})}{\partial q}\ll \frac{1}{q}\quad \mbox{ and }\quad \widehat{\CI}_{q}(\widehat{w};\boldsymbol{0})\ll 1.$$
\end{lemma}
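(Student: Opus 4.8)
This is \cite[Lemma 16]{H-Bdelta}; we indicate the (small) modifications needed for our weights, the only subtlety being that the function $\widehat{w}$ of \eqref{eq:hatw} is \emph{not} compactly supported. Write $x=q/Q$, so that $q$ enters $\widehat{\CI}_{q}(\widehat{w};\boldsymbol{0})=\int_{\BR^{4}}\widehat{w}(\bu)\,h\!\left(x,\widetilde{F}(\bu)\right)\operatorname{d}\bu$ only through the first slot of $h$. Recall from \cite{H-Bdelta} that $h$ is smooth, that $h(x,y)\ll x^{-1}$ and $\tfrac{\partial}{\partial x}h(x,y)\ll x^{-2}$, and that $h(x,y)=0$ whenever $x>\max(1,2|y|)$; we shall also use the sharper one-dimensional estimates, valid for every fixed $g\in C_{c}^{\infty}(\BR)$ uniformly in $x>0$,
\begin{equation*}
	\int_{\BR}g(y)\,h(x,y)\operatorname{d}y\ll_{g}1,\qquad
	\int_{\BR}g(y)\,\tfrac{\partial}{\partial x}h(x,y)\operatorname{d}y\ll_{g}x^{-1},
\end{equation*}
which are among the properties of $h$ recorded in \cite{H-Bdelta} (and which underlie Heath-Brown's own proof).

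The plan is to collapse $\widehat{\CI}_{q}(\widehat{w};\boldsymbol{0})$ to a one-dimensional integral by fibring over the level sets of $\widetilde{F}$ (respectively of $F$, when the weight is $w_{0}$). Since $\widetilde{F}$ is non-degenerate, $\nabla\widetilde{F}$ vanishes only at $\boldsymbol{0}$; on the support of $w_{0}$ that point is excluded (the support lies in a connected component of $W^{o}(\BR)$ bounded away from the origin), while on the support of $\widehat{w}$ from \eqref{eq:hatw} the non-compact $u_{1}$-direction is harmless because the factor $w_{2}(\widetilde{F}(\bu))$ confines $t:=\widetilde{F}(\bu)$ to a fixed compact interval and, after the change of variables $u_{0}=s$, $u_{2}=sv_{2}$, $u_{3}=sv_{3}$, $u_{1}=t/s-sF_{2}(v_{2},v_{3})$ (Jacobian $|s|$), the weight $\widehat{w}$ factorises and all the resulting integrals converge absolutely (using $|h|\ll x^{-1}$). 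In either case the coarea formula gives
\begin{equation*}
	\widehat{\CI}_{q}(\widehat{w};\boldsymbol{0})=\int_{\BR}h\!\left(\tfrac{q}{Q},t\right)\rho(t)\operatorname{d}t,\qquad \rho(t):=\int_{\widetilde{F}^{-1}(t)}\frac{\widehat{w}}{\lvert\nabla\widetilde{F}\rvert}\operatorname{d}\sigma,
\end{equation*}
with $\operatorname{d}\sigma$ the induced surface measure; here $\rho$ is smooth and compactly supported — compact support because $\widehat{w}$ vanishes off $\{t\in\operatorname{supp}w_{2}\}$ (resp.\ because $w_{0}$ is compactly supported), and in the case \eqref{eq:hatw} one in fact finds, directly from the above factorisation, $\rho=c\,w_{2}$ with $c=\bigl(\int_{\BR^{2}}w_{1}(v_{2},v_{3})\operatorname{d}v_{2}\operatorname{d}v_{3}\bigr)\bigl(\int_{\BR}w_{3}(s)\lvert s\rvert\operatorname{d}s\bigr)$.

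With this reduction in place both claims follow at once from the two uniform estimates quoted above:
\begin{equation*}
	\widehat{\CI}_{q}(\widehat{w};\boldsymbol{0})=\int_{\BR}\rho(t)\,h\!\left(\tfrac{q}{Q},t\right)\operatorname{d}t\ll_{\rho}1,\qquad
	\frac{\partial}{\partial q}\widehat{\CI}_{q}(\widehat{w};\boldsymbol{0})=\frac{1}{Q}\int_{\BR}\rho(t)\,\tfrac{\partial}{\partial x}h\!\left(\tfrac{q}{Q},t\right)\operatorname{d}t\ll_{\rho}\frac{1}{Q}\cdot\frac{Q}{q}=\frac{1}{q}.
\end{equation*}
I expect the only non-formal point to be the pair of one-dimensional bounds, which lie genuinely deeper than the pointwise estimates $h\ll x^{-1}$ and $\partial_{x}h\ll x^{-2}$ (these alone would only give $\ll x^{-1}$ and $\ll Q/q^{2}$, useless for $q\ll Q$): one must exploit that the part of $h(x,\cdot)$ of size $\sim x^{-1}$ is concentrated on a set of $y$ of measure $\ll x$, equivalently an integration by parts in $y$ using the superior decay of $\partial_{y}h$. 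As this is carried out in \cite{H-Bdelta}, we do not reproduce it here.
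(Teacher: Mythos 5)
Your argument is correct and takes essentially the same route as the paper, which simply identifies $\widehat{\CI}_q(\widehat{w};\boldsymbol{0})$ with Heath-Brown's $I^*_r(\boldsymbol{0})$ (for $r=q/Q$) and cites his Lemma 16, whose own proof is exactly a reduction to a one-variable integral in $t=\widetilde F(\bu)$ followed by the averaged estimates on $h(x,\cdot)$ and $\partial_x h(x,\cdot)$ that you quote. Your explicit substitution $u_0=s$, $u_2=sv_2$, $u_3=sv_3$, $u_1=t/s-sF_2(v_2,v_3)$ — yielding $\rho=c\,w_2$ compactly supported even though $\widehat{w}$ from \eqref{eq:hatw} is not — is precisely what legitimises that identification in the present setting, so you have spelled out a step the paper delegates to \cite{PART1} and \cite[\S7]{H-Bdelta} rather than doing anything genuinely different.
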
 
Concerning $\bc\neq\boldsymbol{0}$ (see also \cite[Corollary 3.3]{PART1}), we have
\begin{lemma}[\cite{H-Bdelta} Lemma 18]\label{le:easy}
	Assume $\bd\neq\boldsymbol{0}$. Then for any $N>0$,
	$$\widehat{\CI}_{q}(\widehat{w};\boldsymbol{d})\ll_N \frac{Q}{q}\left(\frac{|\bd|}{Q}\right)^{-N}.$$
\end{lemma}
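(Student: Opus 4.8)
The plan is to prove this exactly as Heath-Brown proves \cite[Lemma 18]{H-Bdelta} (compare \cite[Corollary 3.3]{PART1}): by repeated integration by parts (a non-stationary phase argument) in the direction of $\bd$, exploiting that the amplitude $g_q(\bu):=\widehat{w}(\bu)\,h(q/Q,\widetilde F(\bu))$ varies on a coarser scale than the oscillation of the phase $\e_q(-\bd\cdot\bu)$. Concretely, I would set $\mathbf{e}:=\bd/|\bd|$, observe that $(\mathbf{e}\cdot\nabla_{\bu})\,\e_q(-\bd\cdot\bu)=-2\pi i\,|\bd|\,q^{-1}\,\e_q(-\bd\cdot\bu)$, and integrate by parts $N$ times in the $\mathbf{e}$-direction. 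Since the integrand has no boundary contribution, this pulls out a factor $(q/|\bd|)^{N}$ and replaces $g_q$ by $(\mathbf{e}\cdot\nabla_{\bu})^{N}g_q$, so that the lemma is reduced to the amplitude estimate
\[
	\partial^{\balpha}_{\bu}\,g_q(\bu)\ \ll_{\balpha}\ \left(\frac Qq\right)^{1+|\balpha|}
\]
on the support of the integrand.

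To establish this amplitude estimate I would first note that $\widetilde F(\bu)$ is bounded on the support of $\widehat{w}$, whereas $h(x,y)$ vanishes unless $x\le\max(1,2|y|)$; hence the integrand is identically zero unless $q\ll Q$, and in particular $q/Q$ stays bounded and $Q/q\gg 1$. I would then invoke the standard size estimates for $h$ from \cite{H-Bdelta}, namely $h(x,y)\ll x^{-1}$ together with $\partial_y^{j}h(x,y)\ll_{j}x^{-1-j}$ in this range of $x$, and combine them with the facts that $\widetilde F$ is a fixed quadratic form (so all its $\bu$-derivatives of positive order are $O(1)$ on the relevant region) and that the remaining factors of $\widehat{w}$ are fixed smooth weights. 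The Leibniz rule, together with the chain rule applied to $h(q/Q,\widetilde F(\bu))$, then yields the displayed bound, with the factor $Q/q$ arising precisely from the crude estimate $h\ll(q/Q)^{-1}$. Inserting $|\balpha|=N$ gives $\widehat{\CI}_{q}(\widehat{w};\bd)\ll_{N}(q/|\bd|)^{N}(Q/q)^{1+N}=\tfrac Qq(|\bd|/Q)^{-N}$, as required.

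The step I expect to demand the most care is justifying the integration by parts — and the passage of the absolute value into the integral — when $\widehat{w}$ is of the form \eqref{eq:hatw}: there the support is not compact, stretching off to infinity in the $u_1$-direction as $u_0\to 0$, so the naive $u_0$-derivative bounds are not integrable near $u_0=0$. This is handled exactly as in \cite[\S3.1]{PART1}, using that the support nevertheless has finite volume; alternatively one can excise a fixed neighbourhood of $\{u_0=0\}$ and dispose of its contribution separately, applying the argument above to the genuinely compactly supported remainder. For $\widehat{w}=w_0$ as in \eqref{eq:weightpm} this complication does not arise, since $w_0$ is compactly supported.
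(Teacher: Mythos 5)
Your proposal is correct and matches the approach of the paper, which simply identifies $\widehat{\CI}_q(\widehat{w};\bd)=I_r^*(\bd/Q)$ with $r=q/Q$ and cites Heath-Brown's non-stationary-phase Lemma 18 (itself proved by exactly the repeated integration-by-parts argument you unpack, using $h\ll x^{-1}$, $\partial_y^j h\ll_j x^{-1-j}$, and the support constraint $x\le\max(1,2|y|)$). The caveat you raise about the non-compact support of $\widehat{w}$ in the form \eqref{eq:hatw} is real and is indeed dealt with in \cite[\S3.1]{PART1} (and underlies Heath-Brown's own treatment of the zoom condition); flagging it, and observing that the $w_0$ case of \eqref{eq:weightpm} is unproblematic, is exactly the right caution.
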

\begin{lemma}[\cite{H-Bdelta} Lemma 22]\label{le:hard}
	Assume $\bd\neq\boldsymbol{0}$. Then $$\widehat{\CI}_{q}(\widehat{w};\boldsymbol{d})\ll_\varepsilon q \frac{Q^\varepsilon}{|\bd|^{1-\varepsilon}},$$ and
	$$\frac{\partial \widehat{\CI}_{q}(\widehat{w};\boldsymbol{d})}{\partial q}\ll_\varepsilon \frac{Q^\varepsilon }{|\bd|^{1-\varepsilon}}.$$
\end{lemma}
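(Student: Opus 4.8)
\emph{Proof proposal.} The statement is the rescaled form of Heath-Brown's ``harder estimate'' (\cite[Lemma 22]{H-Bdelta}, compare \cite[Corollary 3.3]{PART1}), and the plan is to reproduce the stationary-phase analysis of \cite[\S8]{H-Bdelta}. First I would record the analytic input for $h$ drawn from \cite{H-Bdelta}: $h(x,y)=0$ unless $x\leqslant\max(1,2|y|)$, one has $h(x,y)\ll x^{-1}$, and $\partial_x^a\partial_y^b h(x,y)\ll_{a,b,N}x^{-1-a}(x^{-b}+1)(1+|y|/x)^{-N}$ for all $a,b,N$; in particular $y\mapsto h(q/Q,y)$ is, up to rapidly decaying tails, supported in $|y|\ll q/Q$. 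Since the amplitude $\widehat w$ (of \eqref{eq:hatw} or \eqref{eq:weightpm}) lies in Heath-Brown's weight class $\mathscr{C}(S)$ — smooth, with all derivatives controlled, and with effective support in a fixed bounded region (after excising a sliver where $u_0$ is small, which contributes negligibly) — and $|\widetilde F|$ is bounded there, the support condition on $h$ forces $q\ll Q$, and the factor $h(q/Q,\widetilde F(\bu))$ confines $\bu$ to a slab of volume $\ll q/Q$ about the quadric $\{\widetilde F=0\}$. Hence the trivial bound reads $\widehat{\CI}_q(\widehat w;\bd)\ll 1$ (consistent with Lemma \ref{le:Ic=0}); this already gives the assertion when $|\bd|\ll q$, and Lemma \ref{le:easy} gives it, with room to spare, when $|\bd|\gg Q^{1+\varepsilon}$. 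It therefore remains to treat the range $q\ll|\bd|\ll Q^{1+\varepsilon}$, in which the phase $\e_q(-\bd\cdot\bu)$ genuinely oscillates.

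In that range I would exploit the quadratic structure by Fourier-expanding $h$ in its second variable, $h(q/Q,\widetilde F(\bu))=\int_{\BR}\widehat h(q/Q,\theta)\,\e(-\theta\,\widetilde F(\bu))\,\mathrm d\theta$ (equivalently, by foliating along the level sets of $\widetilde F$ as in \cite[\S8]{H-Bdelta}), which reduces $\widehat{\CI}_q(\widehat w;\bd)$ to $\int_{\BR}\widehat h(q/Q,\theta)\,K(\theta)\,\mathrm d\theta$ with $K(\theta):=\int_{\BR^4}\widehat w(\bu)\,\e\!\left(-\theta\,\widetilde F(\bu)-\tfrac1q\bd\cdot\bu\right)\mathrm d\bu$. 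The phase of $K(\theta)$ has Hessian $-2\theta A$, where $A$ is the Gram matrix of $\widetilde F$ (invertible, as $F$ is non-degenerate), and a single non-degenerate critical point at $-A^{-1}\bd/(2q\theta)$, which lies in the support of $\widehat w$ only once $|\theta|\gg|\bd|/q$. Thus stationary phase gives $K(\theta)\ll|\theta|^{-2}$ for $|\theta|\gg\max(1,|\bd|/q)$, while for $|\theta|\ll|\bd|/q$ the phase has gradient $\gg|\bd|/q$ throughout the support and repeated integration by parts in $\bu$ yields $K(\theta)\ll_N(q/|\bd|)^N$; in the bounded range $1\ll|\theta|\ll|\bd|/q$ one again has $K(\theta)\ll|\theta|^{-2}$ from the standard stationary-phase bound. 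Feeding these into the $\theta$-integral against $\widehat h(q/Q,\cdot)$, which is bounded and decays rapidly beyond $|\theta|\gg Q/q$, the dominant contribution is $\int_{|\theta|\gg|\bd|/q}|\theta|^{-2}\,\mathrm d\theta\ll q/|\bd|$, so $\widehat{\CI}_q(\widehat w;\bd)\ll q/|\bd|\ll qQ^\varepsilon|\bd|^{-1+\varepsilon}$, the $Q^\varepsilon$ comfortably absorbing the logarithmic and polynomial losses from a coarser treatment of the intermediate range and of the weight \eqref{eq:hatw} near $\{u_0=0\}$.

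For the derivative in $q$ I would differentiate under the integral sign. The term in which $\partial_q$ hits $h$ contributes $\partial_q h(q/Q,\widetilde F(\bu))=Q^{-1}(\partial_1 h)(q/Q,\widetilde F(\bu))$, which by $\partial_1 h(x,y)\ll x^{-2}$ (with the same concentration in $y$) is $O(q^{-1})$ times $h(q/Q,\widetilde F(\bu))$ in the estimates used above, so rerunning the previous paragraph gives a contribution $\ll q^{-1}\cdot q/|\bd|=|\bd|^{-1}$; the term in which $\partial_q$ hits $\e_q(-\bd\cdot\bu)$ carries an extra factor $\bd\cdot\bu/q^2\ll|\bd|/q^2$ against the same oscillatory integral, whose $K(\theta)$-analysis is unchanged apart from the amplitude being rescaled by $O(|\bd|)$, and this yields $\partial_q\widehat{\CI}_q(\widehat w;\bd)\ll_\varepsilon Q^\varepsilon|\bd|^{-1+\varepsilon}$. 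The hard part will be the uniformity of the stationary-phase estimates for $K(\theta)$: making the bounds uniform across the transition zone $|\theta|\asymp|\bd|/q$ where the critical point enters the support, handling the sub-ranges of $\theta$ according to how $\widetilde F^*(\bd)/(q^2\theta)$ compares with $1$, and keeping the polynomial-in-$|\bd|$ losses incurred by the non-compactly-supported weight \eqref{eq:hatw} under control (harmless since $|\bd|\ll Q^{O(1)}$ in the relevant range). This uniformity is precisely the content of \cite[\S8]{H-Bdelta} (see also \cite[\S3]{PART1}), and I would follow that treatment.
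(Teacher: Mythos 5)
Your plan is to re-derive Heath-Brown's ``harder estimate'' directly, rather than do what the paper does, which is simply to identify $\widehat{\CI}_q(\widehat w;\bd)=I_r^*(\bd/Q)$ for $r=q/Q$ and then invoke \cite[Lemmata~14 and~22]{H-Bdelta} verbatim. Your outline for the first inequality — Fourier-expand $h$ in its second variable, bound $K(\theta)$ by stationary phase on the non-degenerate phase $-\theta\widetilde F(\bu)-\bd\cdot\bu/q$, and feed back into the $\theta$-integral against $\widehat h(q/Q,\cdot)$ — is indeed a sketch of the mechanism underlying \cite[\S8]{H-Bdelta}, and you correctly defer the genuinely delicate uniformity issues (critical point near $\partial\operatorname{supp}(\widehat w)$, the various sub-ranges according to the size of $\widetilde F^*(\bd)/(q^2\theta^2)$, the non-compact support of \eqref{eq:hatw}) to Heath-Brown's treatment. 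That part is fine as an outline.

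The gap is in the derivative bound, and it is not a detail that Heath-Brown's \S8 fixes automatically: it is exactly the content of \cite[Lemma~14]{H-Bdelta} that the paper cites and you do not. When $\partial_q$ hits the exponential, you pick up the amplitude factor $2\pi i\,\bd\cdot\bu/q^2\ll|\bd|/q^2$, and you then assert this ``yields $\ll Q^\varepsilon|\bd|^{-1+\varepsilon}$''. But if you simply rerun your $K(\theta)$-analysis with the amplitude multiplied by $\bd\cdot\bu/q^2$, the stationary value $\bd\cdot\bu_*/q^2\asymp|\bd|^2/(q^3|\theta|)$ gives $K'(\theta)\ll|\bd|^2/(q^3|\theta|^3)$, and $\int_{|\theta|\gg|\bd|/q}K'(\theta)\,\mathrm d\theta\ll 1/q$ — equivalently, $(|\bd|/q^2)\cdot(q/|\bd|)=1/q$. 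This is \emph{weaker} than the claimed $|\bd|^{-1+\varepsilon}Q^\varepsilon$ precisely in the interesting regime $q\ll|\bd|$, which is where the derivative estimate is actually used in the paper (e.g.\ in the partial-summation step in the proof of Theorem~\ref{thm:realzoom}, where $1/T\cdot Q^\varepsilon/(T|\bd|^{1-\varepsilon})$ must beat the trivial $1/T^2$).

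The missing step is an integration by parts using Euler's identity for the homogeneous quadratic $\widetilde F$. Write
$$\frac{\bd\cdot\bu}{q^2}\,\e_q(-\bd\cdot\bu)=-\frac{1}{2\pi i q}\bigl(\bu\cdot\nabla_\bu\bigr)\e_q(-\bd\cdot\bu),$$
and integrate by parts in $\bu$: the Euler operator falling on the amplitude produces $(\bu\cdot\nabla_\bu\widehat w)\,h$ and, via $\bu\cdot\nabla_\bu\widetilde F=2\widetilde F$, the term $2\widehat w\,\widetilde F\,\partial_2 h(q/Q,\widetilde F)$ — both of which give new weights of the same type as in \eqref{eq:hatw} and new kernels $y\,\partial_2h(x,y)$ still in Heath-Brown's class $\CH$. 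This rewrites the offending term as $q^{-1}$ times integrals to which the first bound applies, giving the contribution $\ll q^{-1}\cdot qQ^\varepsilon|\bd|^{-1+\varepsilon}=Q^\varepsilon|\bd|^{-1+\varepsilon}$. That trade of the amplitude factor $|\bd|/q^2$ for $1/q$ is a gain of $q/|\bd|$, and without it your bound does not match the statement. This is precisely why the paper routes the derivative through \cite[Lemma~14]{H-Bdelta}, which packages exactly this identity before handing the resulting integrals to \cite[Lemma~22]{H-Bdelta}. Your write-up should either carry out this integration by parts explicitly or cite \cite[Lemma~14]{H-Bdelta} for it.
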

\begin{proof}[Sketch of Proof]
	In the notation of \cite[\S7]{H-Bdelta} $$\widehat{\CI}_{q}(\widehat{w};\boldsymbol{d})=I_{r}^*\left(\frac{\bd}{Q}\right)$$ for $r:=\frac{q}{Q}$. 
	Then Lemma \ref{le:easy} follows directly from \cite[Lemmata 14\&18]{H-Bdelta}. On the other hand, \cite[Lemma 14]{H-Bdelta} also gives
	$$\frac{\partial \widehat{\CI}_{q}(\widehat{w};\boldsymbol{d})}{\partial q}=\frac{1}{Q}\frac{\partial I_{r}^*\left(\frac{\bd}{Q}\right)}{\partial r}\ll \frac{Q}{q^2}\left|\int_{\BR^4}\widehat{w}_1(\bt)f_{\frac{q}{Q}}(\widetilde{F}(\bt))\e\left(-\frac{\bd}{q}\cdot \bt\right)\operatorname{d}\bt\right|\footnote{In \cite{H-Bdelta} this integral is denoted by $I(r;\bu)$.},$$ where the function $f_s(y)$ is either $sh(s,y)$ or $s^2\frac{\partial h(s,y)}{\partial s}$ which all belong to  the class $\CH$ of \cite[(7.1)]{H-Bdelta}, and  $\widehat{w}_1\in \CC_0(S)$ is a certain weight function.
	So Lemma \ref{le:Ic=0} follows from \cite[Lemma 16]{H-Bdelta} and Lemma \ref{le:hard} follows from  \cite[Lemma 22]{H-Bdelta}. 
\end{proof}
\subsection{The quadratic exponential sums}
	Following \cite[\S4.1]{PART1}, let us decompose  \begin{equation}\label{eq:qdecomp}
		q=q_1q_2, \quad \frac{F(\blambda)}{L}=k_2q_1+k_1q_2L
	\end{equation} with $\gcd(q_1,q_2L)=1$ and $k_1\bmod q_1,k_2\bmod q_2L$.
We use the Chinese remainder theorem to decompose
 \begin{equation}\label{eq:sqdecomp}
	S_{q,L,\blambda}(\bc)=S^{(1)}_{q,L,\blambda}(\bc)S^{(2)}_{q,L,\blambda}(\bc),
\end{equation}
where we define \begin{equation}\label{eq:S1}
		\begin{split}
			S^{(1)}_{q,L,\blambda}(\bc)&:=\sum_{\bsigma_1\in (\BZ/q_1\BZ)^4}\sum_{\substack{a_1\bmod q_1\\(a_1,q_1)=1}}T^{(1)}_{q,L,\blambda}(a_1,\bsigma_1,\bc),
		\end{split}
	\end{equation}  with
	$$T^{(1)}_{q,L,\blambda}(a_1,\bsigma_1,\bc):=\e_{q_1}\left(a_1\left((q_2L)^2F(\bsigma_1)+q_2(\nabla F(\blambda)\cdot \bsigma_1+k_1)\right)+\bc\cdot\bsigma_1\right),$$ and we also let \begin{equation}\label{eq:S2}
		S^{(2)}_{q,L,\blambda}(\bc):=\sum_{\substack{\bsigma_2\in (\BZ/q_2L\BZ)^4\\ H_{\blambda,L}(q_1\bsigma_2)\equiv 0\bmod L}}\sum_{\substack{a_2\bmod q_2\\(a_2,q_2)=1}}T^{(2)}_{q,L,\blambda}(a_2,\bsigma_2,\bc),
	\end{equation}  with
	$$T^{(2)}_{q,L,\blambda}(a_2,\bsigma_2,\bc):=\e_{q_2L}\left(a_2\left(q_1^2L F(\bsigma_2)+q_1(\nabla F(\blambda)\cdot \bsigma_2+k_2)\right)+\bc\cdot\bsigma_2\right).$$

	If moreover $\gcd(q_1,2L\Delta_F)=1$, by \cite[Proposition 4.4]{PART1} the $S^{(1)}$-term can be evaluated:
	\begin{equation}\label{eq:S1R}
		S^{(1)}_{q,L,\blambda}(\bc)=\e_{q_1}\left(-\overline{q_2L^2}\bc\cdot\blambda\right) \CR(q_1;\bc)
	\end{equation}where \begin{equation}\label{eq:Rqc}
	\CR(q_1;\bc):=q_1^2\psi_F(q_1)\sum_{\substack{a\bmod q_1\\(a,q_1)=1}}\e_{q_1}\left(-aF^*(\bc)\right).
\end{equation}

	As for the $S^{(2)}$-term, in \cite[Proposition 4.6]{PART1} we have established
	\begin{equation}\label{eq:S2bd}
			S^{(2)}_{q,L,\blambda}(\bc)\ll q_2^{3}L^3\gcd(q_2,L)^\frac{1}{2}.
	\end{equation}
The following improves upon \cite[Theorem 4.1]{PART1} in the case $n=3$.
\begin{proposition}\label{prop:fstarcneq0}	
		Uniformly for any $q,L,\blambda$ and $\bc$ we have
		$$S_{q,L,\blambda}(\bc)\ll (qL)^3\gcd(q,L)^\frac{1}{2}.$$
		If moreover $F^*(\bc)\neq 0$, we have  $$\sum_{q\leqslant X}\left|S_{q,L,\blambda}(\bc)\right|\ll_\varepsilon X^{3+\varepsilon}|\bc|^\varepsilon L^{3+\varepsilon}\min(X,L)^\frac{1}{2}.$$
\end{proposition}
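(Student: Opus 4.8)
The plan is to exploit the multiplicative decomposition \eqref{eq:sqdecomp}, feeding in the external inputs \eqref{eq:S1R}--\eqref{eq:Rqc} and \eqref{eq:S2bd}. I would write $q=q_1q_2$ with $q_1$ the largest divisor of $q$ coprime to $2L\Delta_F$ and $q_2=q/q_1$, so that every prime of $q_2$ divides $2L\Delta_F$ (thus $q_2$ ranges over divisors of $(2L\Delta_F)^{\infty}$), $\gcd(q_1,q_2L)=1$, and $\gcd(q_1,L)=1$ (whence $\gcd(q_2,L)=\gcd(q,L)$). With this choice \eqref{eq:S2bd} applies, and $\gcd(q_1,2L\Delta_F)=1$ makes \eqref{eq:S1R} applicable as well; one only has to note that \cite[Proposition~4.6]{PART1} remains valid for this (slightly non-canonical) split, its proof using just $\gcd(q_1,q_2L)=1$, or else one splits $q_2$ further into its $L$-part and its $(2\Delta_F)$-part. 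Writing $c_n(m)=\sum_{a\bmod n}^{*}\e_n(am)=\sum_{d\mid\gcd(n,m)}d\,\mu(n/d)$ for the Ramanujan sum and using $|\psi_F(q_1)|=1$, \eqref{eq:S1R} gives $|S^{(1)}_{q,L,\blambda}(\bc)|=q_1^{2}|c_{q_1}(F^*(\bc))|$, and hence
\[
\bigl|S_{q,L,\blambda}(\bc)\bigr|=q_1^{2}\,\bigl|c_{q_1}\!\bigl(F^*(\bc)\bigr)\bigr|\cdot\bigl|S^{(2)}_{q,L,\blambda}(\bc)\bigr|.
\]
For the first, uniform bound it then suffices to use $|c_{q_1}(m)|\le\phi(q_1)\le q_1$ together with \eqref{eq:S2bd}: $|S_{q,L,\blambda}(\bc)|\le q_1^{3}\cdot q_2^{3}L^{3}\gcd(q_2,L)^{1/2}=(qL)^{3}\gcd(q,L)^{1/2}$.

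For the averaged bound the point is to exploit the cancellation in the $a$-sum of $\CR(q_1;\bc)$, i.e. the fact that it is a Ramanujan sum, which is genuinely small on average over $q_1$ precisely when $F^*(\bc)\ne0$. I would sum the displayed identity over $q\le X$, bound $|S^{(2)}_{q,L,\blambda}(\bc)|$ by \eqref{eq:S2bd} (which is $q_1$-free, so the $q_1$- and $q_2$-sums separate), and insert the elementary estimate $\sum_{n\le Y}n^{2}|c_n(m)|\ll_\varepsilon Y^{3}|m|^{\varepsilon}$ for $m\ne0$ --- immediate from $c_n(m)=\sum_{d\mid\gcd(n,m)}d\mu(n/d)$ on reversing the order of summation --- applied with $m=F^*(\bc)$ (so $|m|^{\varepsilon}\ll|\bc|^{\varepsilon}$) and $Y=X/q_2$, giving
\[
\sum_{q\le X}\bigl|S_{q,L,\blambda}(\bc)\bigr|\ll_\varepsilon|\bc|^{\varepsilon}X^{3}L^{3}\sum_{\substack{q_2\le X\\ q_2\mid(2L\Delta_F)^{\infty}}}\gcd(q_2,L)^{1/2}.
\]
Finally $\gcd(q_2,L)\le\min(q_2,L)\le\min(X,L)$ on this range, while $\#\{q_2\le X:q_2\mid(2L\Delta_F)^{\infty}\}\ll_\varepsilon(XL)^{\varepsilon}$ by Rankin's trick --- the primes dividing $2\Delta_F$ contributing $O_\varepsilon(1)$, those dividing $L$ contributing $\prod_{p\mid L}(1-p^{-\varepsilon})^{-1}\ll_\varepsilon L^{\varepsilon}$ (via $\omega(L)\ll\log L/\log\log L$) --- which produces the claimed bound $\ll_\varepsilon X^{3+\varepsilon}|\bc|^{\varepsilon}L^{3+\varepsilon}\min(X,L)^{1/2}$.

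I expect the real difficulty to lie not in any single step but in keeping all the estimates uniform: the Proposition must hold for all $q,L,\blambda,\bc$ with no relation assumed between $X$ and $L$, so one cannot lose a factor that is only $X^{o(1)}L^{o(1)}$ without it being honestly absorbable into $X^{\varepsilon}L^{\varepsilon}$. This is what forces the Rankin treatment of the sum over $q_2\mid(2L\Delta_F)^{\infty}$ rather than a crude divisor bound, and what makes the bookkeeping of the Ramanujan-sum average delicate. The one remaining technical point --- justifying \eqref{eq:S2bd} for the non-canonical decomposition, or equivalently carrying out the extra split of $q_2$ --- is routine.
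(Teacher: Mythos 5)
Your proof is correct and follows essentially the same route as the paper: the same split $q=q_1q_2$ into the part coprime to $2L\Delta_F$ and the part dividing $(2L\Delta_F)^\infty$, identification of the $a$-sum in $\CR(q_1;\bc)$ as a Ramanujan sum, the input \eqref{eq:S2bd} for $S^{(2)}$, and the near-logarithmic count of the $q_2$-range. The only cosmetic difference is that the paper first records the pointwise bound $|c_{q_1}(F^*(\bc))|\le\gcd(q_1,F^*(\bc))$ and then sums, whereas you average the Ramanujan sum directly via its divisor formula; the two calculations are interchangeable.
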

\begin{proof}
		The uniform bound follows directly from \eqref{eq:S1R} \eqref{eq:Rqc} \eqref{eq:S2bd}.
		
	Assume $F^*(\bc)\neq 0$. On recalling \eqref{eq:Rqc}, the sum inside of $\CR(q_1;\bc)$ is a Ramanujan sum, which can be controlled  by 
	$$\left|\sum_{\substack{a\bmod q_1\\(a,q_1)=1}}\e_{q_1}\left(-aF^*(\bc)\right)\right|\leqslant \gcd(q_1,F^*(\bc)),$$ so that $$\left|S^{(1)}_{q,L,\blambda}(\bc)\right|\leqslant q_1^2\gcd(q_1,F^*(\bc)).$$ Compared to \cite[Corollary 4.5]{PART1}, this improved estimate for $S^{(1)}$ saves the factor $q_1^\frac{1}{2}$, while losing $\gcd(q_1,F^*(\bc))^\frac{1}{2}$. This is crucial in optimising the bound in the $X$-aspect.
	Thanks to \eqref{eq:sqdecomp} and using \eqref{eq:S2bd}, we have
	\begin{align*}
		\sum_{q\leqslant X}\left|S_{q,L,\blambda}(\bc)\right|&=\sum_{\substack{q_1q_2\leqslant X\\ (q_1,2L\Delta_F)=1,q_2\mid (2L\Delta_F)^\infty}}\left|S^{(1)}_{q,L,\blambda}(\bc)S^{(2)}_{q,L,\blambda}(\bc)\right|\\ &\leqslant  \sum_{\substack{q_2\leqslant X\\ q_2\mid (2L\Delta_F)^\infty}}q_2^3 L^3\gcd(q_2,L)^\frac{1}{2}\sum_{\substack{q_1\leqslant \frac{X}{q_2}\\ (q_1,2L\Delta_F)=1}}q_1^2\gcd(q_1,F^*(\bc))\\ &\ll |\bc|^\varepsilon L^3\sum_{\substack{q_2\leqslant X\\ q_2\mid (2L\Delta_F)^\infty}}q_2^3\gcd(q_2,L)^\frac{1}{2}\left(\frac{X}{q_2}\right)^3\\ &\ll_\varepsilon X^3|\bc|^\varepsilon L^3\sum_{\substack{q_2\leqslant X\\ q_2\mid (2L\Delta_F)^\infty}}\min(q_2,L)^\frac{1}{2}\\ &\ll_\varepsilon X^{3+\varepsilon}|\bc|^\varepsilon L^{3+\varepsilon}\min(X,L)^\frac{1}{2}.
	\end{align*}
The proof is completed.
\end{proof}

Recall the function $\theta_1$ from \eqref{eq:theta}. If $F^*(\bc)=0$, it follows from \eqref{eq:Rqc} that we have
\begin{equation}\label{eq:RqcFstarc=0}
	\CR(q;\bc)=q^3\theta_1(q)\psi_F(q)=\phi(q^3)\psi_F(q).
\end{equation}
The following elementary estimate will be frequently used.
\begin{lemma}\label{prop:fstarc=0}
	Let $\chi$ be a Dirichlet character of modulus $|\chi|$.  If $\chi\psi_F$   is not principal,  then
	$$\sum_{\substack{n\leqslant X}}\chi(n)\psi_F(n)\theta_1(n)=O_\varepsilon\left(X^{\varepsilon}|\chi|^{\frac{1}{2}+\varepsilon}\right).$$ 
\end{lemma}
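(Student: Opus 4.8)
The plan is to peel off the multiplicative weight $\theta_1$ by a Dirichlet convolution and then reduce to the Pólya--Vinogradov inequality. The starting point is the elementary identity $\theta_1(n)=\phi(n)/n=\sum_{d\mid n}\mu(d)/d$, i.e.\ $\theta_1=\mathbf 1*(\mu(\cdot)/\cdot)$ as multiplicative functions; this is the key observation, since it is what makes the ensuing $d$-sum convergent. As $\chi\psi_F$ is a completely multiplicative Dirichlet character, writing $n=dm$ and separating the variables gives
\begin{equation*}
\sum_{n\leqslant X}\chi(n)\psi_F(n)\theta_1(n)=\sum_{d\leqslant X}\frac{\mu(d)\chi(d)\psi_F(d)}{d}\,\Sigma\!\left(\frac Xd\right),\qquad\text{where}\qquad \Sigma(Y):=\sum_{m\leqslant Y}\chi(m)\psi_F(m).
\end{equation*}

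Next I would bound $\Sigma(Y)$. The product $\chi\psi_F$ is a Dirichlet character of modulus dividing $|\chi|\cdot 4|\Delta_F|$, and it is non-principal by hypothesis, hence so is the primitive character inducing it. Expanding the principal part over the squarefree divisors of the modulus and applying Pólya--Vinogradov to the inducing primitive character yields, uniformly in $Y\geqslant 1$,
\begin{equation*}
\Sigma(Y)\ll_\varepsilon \bigl(|\chi|\,|\Delta_F|\bigr)^{1/2+\varepsilon}\ll_\varepsilon |\chi|^{1/2+\varepsilon},
\end{equation*}
the last step because $F$, hence $\Delta_F$, is fixed throughout. Of course one also has the trivial bound $\Sigma(Y)\ll Y$.

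Finally I would combine these two estimates for $\Sigma$, splitting the $d$-range at $d_0:=X|\chi|^{-1/2}$: for $d\leqslant d_0$ the Pólya--Vinogradov bound contributes $\ll_\varepsilon |\chi|^{1/2+\varepsilon}\sum_{d\leqslant d_0}d^{-1}\ll_\varepsilon |\chi|^{1/2+\varepsilon}\log X$, while for $d>d_0$ the trivial bound contributes $\ll X\sum_{d>d_0}d^{-2}\ll X/d_0=|\chi|^{1/2}$; adding these and absorbing $\log X$ into $X^\varepsilon$ gives the claimed $O_\varepsilon\!\left(X^\varepsilon|\chi|^{1/2+\varepsilon}\right)$. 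There is no genuine obstacle here --- this is a routine incomplete-character-sum estimate --- and the only points needing a little care are the convolution identity for $\theta_1$ and the handling of the possible imprimitivity of $\chi\psi_F$ in Pólya--Vinogradov, noting that the dependence on the fixed discriminant $\Delta_F$ is harmless. If one wished to remove even the $\log X$ loss, one could instead observe that $\sum_n\chi(n)\psi_F(n)\theta_1(n)n^{-s}$ equals $\BL(s,\chi\psi_F)/\BL(s+1,\chi\psi_F)$ up to finitely many Euler factors and argue by a contour shift, but the elementary bound above is amply sufficient for the applications in this paper.
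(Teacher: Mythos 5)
Your proposal is correct and follows essentially the same route as the paper: write $\theta_1=\mathbf 1*(\mu(\cdot)/\cdot)$, separate variables, and apply Pólya--Vinogradov to the inner character sum over $m$. The only cosmetic difference is that you split the $d$-range at $d_0=X|\chi|^{-1/2}$ to trade between the Pólya--Vinogradov and trivial bounds; this is harmless but unnecessary, since summing $|\chi|^{1/2+\varepsilon}/d$ directly over all $d\leqslant X$ already gives the stated $X^\varepsilon|\chi|^{1/2+\varepsilon}$, which is exactly what the paper does.
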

\begin{proof}
	We have
	\begin{align*}
		\sum_{\substack{n\leqslant X}}\chi(n)\psi_F(n)\theta_1(n)&=\sum_{e_1\leqslant X}\chi(e_1)\psi_F(e_1)\frac{\mu(e_1)}{e_1}\sum_{e_2\leqslant \frac{X}{e_1}}\chi(e_2)\psi_F(e_2)\ll_\varepsilon X^{\varepsilon}|\chi|^{\frac{1}{2}+\varepsilon},
	\end{align*} on applying the Polya--Vinogradov bound to the inner character sum. \end{proof}

\subsection{The contribution from $\bc=\boldsymbol{0}$}
We define a ``modified'' singular series
\begin{equation}\label{eq:singser}
	\widetilde{\mathfrak{S}}_{L,\bGamma}(\CW):=\BL(1,\psi_F)\prod_{p<\infty}\left(1-\frac{\psi_F(p)}{p}\right)\sigma_p(\CW;L,\bGamma),
\end{equation}
where $$\sigma_p(\CW;L,\bGamma):=\lim_{k\to\infty}\frac{\#\{\bv\in \CW(\BZ/p^k\BZ):\bv\equiv\bGamma\bmod p^{\operatorname{ord}_p(L)}\}}{p^{3k}},$$ is the usual $p$-adic density of $\CW$, but with an extra congruence condition $\bv\equiv\bGamma\bmod p^{\operatorname{ord}_p(L)}$ included to account for the $p$-adic zoom condition. 
\begin{proposition}\label{prop:c=0L}
	Let $L,R\geqslant 1$. Let $w_{B,R}$ be either given by \eqref{eq:weightBR} or \eqref{eq:weightpm}. We have
	$$\sum_{q \ll Q}\frac{S_{q,L,\blambda}(\boldsymbol{0})I_{q,L,\blambda}(w_{B,R};\boldsymbol{0})}{(qL)^4}=\frac{B^4}{(LR)^2}\left(\CI(w_R)\widetilde{\mathfrak{S}}_{L,\bGamma}(\CW)+O_\varepsilon\left(B^{\varepsilon}(LR)^{-2}Q^{-1}\right)\right),$$ where for the case \eqref{eq:weightBR},	\begin{equation}\label{eq:weightedsingint}
		\CI(w_R):=
			\int_{\BR^4}\int_{\BR} w_1\left(R\left(\frac{t_2}{t_0}(\bx),\frac{t_3}{t_0}(\bx)\right)\right) w_2(F(\bx))w_3(t_0(\bx))\e\left(\theta F(\bx)\right)\operatorname{d}\bx\operatorname{d}\theta,
	\end{equation} and for the case \eqref{eq:weightpm}, 
	\begin{equation}\label{eq:weightedsingintpm}
		\CI(w_R)=\CI(w_0):=\int_{\BR^4}\int_{\BR}w_0(\bx)\e\left(\theta F(\bx)\right)\operatorname{d}\bx\operatorname{d}\theta.
	\end{equation}
\end{proposition}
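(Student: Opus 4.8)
The plan is to isolate the $\bc=\boldsymbol{0}$ contribution by a direct evaluation of $S_{q,L,\blambda}(\boldsymbol{0})$ followed by partial summation against $I_{q,L,\blambda}(w_{B,R};\boldsymbol{0})$, using the size and derivative bounds of Lemma~\ref{le:Ic=0}. First I would record that for $\bc=\boldsymbol{0}$ the sum $S_{q,L,\blambda}(\boldsymbol{0})$ factors as $S^{(1)}_{q,L,\blambda}(\boldsymbol{0})S^{(2)}_{q,L,\blambda}(\boldsymbol{0})$ via \eqref{eq:sqdecomp}; on the $S^{(1)}$-side, \eqref{eq:S1R}, \eqref{eq:Rqc} and \eqref{eq:RqcFstarc=0} (since $F^*(\boldsymbol{0})=0$) give $S^{(1)}_{q_1,L,\blambda}(\boldsymbol{0})=\phi(q_1^3)\psi_F(q_1)$ whenever $\gcd(q_1,2L\Delta_F)=1$, while the $q_2\mid(2L\Delta_F)^\infty$ part is an explicit local factor counting solutions of $F(\bx)\equiv 0$ with the congruence condition mod $L$. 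Assembling the Dirichlet series $\sum_q S_{q,L,\blambda}(\boldsymbol{0})(qL)^{-4} q^{-s}$ (or rather the relevant normalised sum) one sees that it has an Euler product whose $p\nmid 2L\Delta_F$ factors are $\bigl(1-\psi_F(p)p^{-1-s}\bigr)^{-1}$ up to lower-order terms, which is exactly $\BL(s+1,\psi_F)$ times a convergent correction; the ramified primes contribute $\prod_{p\mid 2L\Delta_F}(1-\psi_F(p)/p)\sigma_p(\CW;L,\bGamma)$. This is where the convergence factor $(1-\psi_F(p)/p)$ in \eqref{eq:singser} comes from and why $\widetilde{\mathfrak{S}}_{L,\bGamma}(\CW)$ takes that precise shape.

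Next I would handle the oscillatory integral. Using \eqref{eq:IJ} with $\bc=\boldsymbol{0}$ (so $\bd=\boldsymbol{0}$), one has $I_{q,L,\blambda}(w_{B,R};\boldsymbol{0})=|\det M|^{-1}(B/LR)^4\,\widehat{\CI}_q(\widehat w;\boldsymbol{0})$, and as $q$ ranges up to $O(Q)$ the function $\widehat{\CI}_q(\widehat w;\boldsymbol{0})$ is bounded by $1$ with $q$-derivative $\ll 1/q$ by Lemma~\ref{le:Ic=0}. The standard move (as in Heath-Brown's treatment and \cite[\S3]{PART1}) is to complete the $q$-sum to infinity and write $\widehat{\CI}_q(\widehat w;\boldsymbol{0})=\widehat{\CI}_\infty(\widehat w;\boldsymbol{0})+O(\int_q^\infty t^{-1}\,\cdots)$; the limit $\widehat{\CI}_\infty$ reconstructs the inner integral over $\theta$ in \eqref{eq:weightedsingint}/\eqref{eq:weightedsingintpm} against the weight, yielding $\CI(w_R)$. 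Partial summation then pairs the arithmetic Dirichlet series (evaluated at $s=0$, giving $\BL(1,\psi_F)$ and the finite product) with $\widehat{\CI}_\infty$, producing the main term $\dfrac{B^4}{(LR)^2}\CI(w_R)\widetilde{\mathfrak{S}}_{L,\bGamma}(\CW)$, while the tail of the $q$-sum together with the derivative bound contributes the error $O_\varepsilon\bigl(B^\varepsilon (LR)^{-2}Q^{-1}\bigr)$. The factor $(LR)^{-2}$ rather than $(LR)^{-4}$ in front is exactly the $(qL)^{-4}\cdot(B/LR)^4 = B^4 (LR)^{-4}L^{-4}\cdot\cdots$ bookkeeping combined with the fact that $S_{q,L,\blambda}(\boldsymbol 0)$ carries an $L^{3}$ (rather than $L^4$) and the local densities absorb one more power; I would check this normalisation carefully.

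The genuinely delicate step — the one I expect to be the main obstacle — is proving that the completed arithmetic Dirichlet series $\sum_{q\geqslant 1} S_{q,L,\blambda}(\boldsymbol 0)(qL)^{-4}$ converges absolutely and equals $\BL(1,\psi_F)\prod_p(1-\psi_F(p)/p)\sigma_p(\CW;L,\bGamma)$ with an \emph{explicit} tail bound of size $B^\varepsilon (LR)^{-2}Q^{-1}$ uniform in $L$. On the $p\nmid 2L\Delta_F$ factors, $S^{(1)}_{p^k,L,\blambda}(\boldsymbol 0)=\phi(p^{3k})\psi_F(p^k)$ means the Euler factor is $\sum_k \phi(p^{3k})\psi_F(p^k)p^{-4k}=\sum_k (1-p^{-1})\psi_F(p^k)p^{-k}+(\text{convergent})$, which telescopes to $(1-p^{-1})(1-\psi_F(p)/p)^{-1}$ up to a factor $(1-p^{-2})^{-1}$-type correction — and matching this against the $\sigma_p$ definition requires Lemma~\ref{prop:fstarc=0}-style estimates for the partial sums of $\psi_F(n)\theta_1(n)$ to control the truncation error when $q$ is restricted to $q\ll Q$. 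For the ramified part $q_2\mid (2L\Delta_F)^\infty$ one invokes \eqref{eq:S2bd} to bound the tail $\sum_{q_2>Z}$ geometrically, and the condition $L^2\asymp B^{1-\tau}$ (or $R^2\asymp B^{1-\tau}$) guarantees the resulting error is of the claimed order. I would also need to verify that replacing $w_{B,R}$ by the alternative weight $\wtil_B$ of \eqref{eq:weightpm} changes nothing in this analysis beyond the identification \eqref{eq:weightedsingintpm}, since there $R=1$ and $M$ is the identity, so the same computation applies verbatim.
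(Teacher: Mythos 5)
Your proposal has the right overall architecture: decomposing $S_{q,L,\blambda}(\boldsymbol 0)$ multiplicatively into the $q_1$-part (equal to $q_1^3\theta_1(q_1)\psi_F(q_1)$ for $\gcd(q_1,2L\Delta_F)=1$) and the $q_2\mid(2L\Delta_F)^\infty$-part, using a Polya--Vinogradov bound for partial sums of $\psi_F(n)\theta_1(n)$, an Euler-product analysis to identify $\sum_q S_{q,L,\blambda}(\boldsymbol 0)q^{-4}$ with $L^6\widetilde{\mathfrak S}_{L,\bGamma}(\CW)$, and partial summation to control the tail — this is exactly what the paper calls the ``double Kloosterman refinement'' and is the heart of the arithmetic side. (One small imprecision: the Euler product $\prod_p(1-\psi_F(p)/p)\sigma_p$ is over \emph{all} primes, not just the ramified ones; the convergence factor $(1-\psi_F(p)/p)$ is essential at every unramified $p$ as well.)

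There is, however, a genuine gap on the analytic side. You write $\widehat{\CI}_q(\widehat w;\boldsymbol 0)=\widehat{\CI}_\infty(\widehat w;\boldsymbol 0)+O\bigl(\int_q^\infty t^{-1}\,dt\bigr)$ and claim that the limit $\widehat{\CI}_\infty$ reconstructs the singular integral $\CI(w_R)$. Neither half of this is correct. First, $\widehat{\CI}_t(\widehat w;\boldsymbol 0)$ \emph{vanishes identically} once $t\gg Q$ (this is \cite[Lemma 4]{H-Bdelta}, used later in the paper), so $\widehat{\CI}_\infty=0$; the singular integral is recovered in the opposite regime $q/Q\to 0$, where $h(q/Q,\cdot)$ concentrates like a $\delta$. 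Second, even ignoring the wrong limit, the error you propose diverges: the only bound Lemma \ref{le:Ic=0} gives on the $q$-derivative is $\ll 1/q$, so $\int_q^\infty t^{-1}\,dt$ is infinite (and $\int_0^q t^{-1}\,dt$ equally so). What is actually needed — and what the paper cites from \cite[Theorem 6.2, Proposition 6.5]{PART1} as equation \eqref{eq:Iq0} — is the much stronger statement $I_{q,L,\blambda}(w_{B,R};\boldsymbol 0)=\tfrac{B^4}{L^4R^2}\bigl(\CI(w_R)+O_N((q/Q)^N)\bigr)$ for arbitrary $N$, a rapid-decay estimate specific to the $\delta$-method kernel $h$ that cannot be deduced from the crude derivative bound alone. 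With that ingredient in hand, the paper's proof proceeds by splitting the $q$-range at $QB^{-\varepsilon}$: for small $q$ one uses \eqref{eq:Iq0} to extract the main term $\frac{B^4}{L^8R^2}\CI(w_R)\sum_{q\geq 1}S_{q,L,\blambda}(\boldsymbol 0)q^{-4}$ plus a negligible $(q/Q)^N$-error, while for $q>QB^{-\varepsilon}$ Lemma \ref{le:Ic=0} and the double Kloosterman bound together give the advertised $O_\varepsilon(B^\varepsilon(LR)^{-2}Q^{-1})$ tail. Your sketch conflates these two distinct estimates, and without \eqref{eq:Iq0} the error term cannot be closed.
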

\begin{proof}
	By \eqref{eq:S1R} and \eqref{eq:Rqc}, 
	$$S^{(1)}_{q,L,\blambda}(\boldsymbol{0})=S_{q_1}(\boldsymbol{0}):=q_1^3\theta_1(q_1)\psi_F(q_1),$$ which is independent of $q_2$. By Lemma \ref{prop:fstarc=0},
	$$\sum_{\substack{q\leqslant Y\\ \gcd(q,2L\Delta_F)=1}}\frac{S_{q}(\boldsymbol{0})}{q^3}=\sum_{n\leqslant Y}\chi_0[2L\Delta_F](n)\psi_F(n)\theta_1(n)\ll_\varepsilon L^{\frac{1}{2}+\varepsilon} Y^\varepsilon.$$
	Moreover, by \eqref{eq:S2}, the change of variables $\bsigma_2\mapsto \bsigma_2':=q_1\bsigma_2$ yields
	$$S^{(2)}_{q,L,\blambda}(\boldsymbol{0})=\sum_{\substack{\bsigma_2'\in (\BZ/q_2L\BZ)^4\\ H_{\blambda,L}(\bsigma_2')\equiv 0\bmod L}}\sum_{\substack{a_2\bmod q_2\\(a_2,q_2)=1}}\e_{q_2L^2}\left(a_2F(L\bsigma_2'+\blambda)\right)=:\CU_{L,\blambda}(q_2),$$ which is independent of $q_1$. 
	Applying (\ref{eq:S2bd}), we thus have  the following ``double Kloosterman refinement'' result: 
	\begin{equation}\label{eq:Klostermann=3}
	\begin{split}
		\sum_{q\leqslant X} \frac{S_{q,L,\blambda}(\boldsymbol{0})}{q^3}&=\sum_{\substack{q_2\leqslant X\\ q_2\mid (2L\Delta_F)^\infty}}\frac{\CU_{L,\blambda}(q_2)}{q_2^3}\sum_{\substack{q_1\leqslant \frac{X}{q_2}\\ (q_1,2L\Delta_F)=1}}\frac{S_{q_1}(\boldsymbol{0})}{q_1^3}\\ &\leqslant \sum_{\substack{q_2\leqslant X\\ q_2\mid (2L\Delta_F)^\infty}}\left|\frac{\CU_{L,\blambda}(q_2)}{q_2^3}\right|\left|\sum_{\substack{q_1\leqslant \frac{X}{q_2}\\ (q_1,2L\Delta_F)=1}}\frac{S_{q_1}(\boldsymbol{0})}{q_1^3}\right|\\ &\ll_\varepsilon X^\varepsilon L^{\frac{7}{2}+\varepsilon}\sum_{\substack{q_2\leqslant X\\ q_2\mid (2L\Delta_F)^\infty}}\gcd(q_2,L)^\frac{1}{2}\ll_\varepsilon X^\varepsilon L^{\frac{7}{2}+\varepsilon}\min(X,L)^\frac{1}{2}.
	\end{split}	\end{equation} Consequently by partial summation we have
	\begin{equation}\label{eq:Klostermann=3L}
		\sum_{q> X} \frac{S_{q,L,\blambda}(\boldsymbol{0})}{q^4}\ll_{\varepsilon} L^{4+\varepsilon}X^{-1+\varepsilon}.
	\end{equation}
	
 Proceeding as in \cite[Proof of Theorem 6, p. 201]{H-Bdelta}, for a dyadic parameter $T\ll Q$, we give two different estimates for $$\CG(B,L,R;T):=\sum_{q\sim T}\frac{S_{q,L,\blambda}(\boldsymbol{0})I_{q,L,\blambda}(w_{B,R};\boldsymbol{0})}{q^4},$$
 where we write $\sum_{q\sim T}$ for a dyadic sum $\sum_{T<q\leq 2T}$.
	According to \cite[Theorem 6.2, Proposition 6.5]{PART1}, 
	\begin{equation}\label{eq:Iq0}
		I_{q,L,\blambda}(w_{B,R};\boldsymbol{0})=\frac{B^4}{L^4R^2}\left(\CI(w_R)+O_{N}\left(\left(\frac{q}{Q}\right)^N\right)\right),
	\end{equation} where $\CI(w_R)$ is given by \eqref{eq:weightedsingint} and \eqref{eq:weightedsingintpm}. It follows that
	$$\CG(B,L,R;T)=\frac{B^4}{L^4R^2}\left(\left(\sum_{q\sim T}\frac{S_{q,L,\blambda}(\boldsymbol{0})}{q^4}\right)\CI(w_R)+O_N\left(\sum_{q\sim T}\frac{\left|S_{q,L,\blambda}(\boldsymbol{0})\right|}{q^4}\left(\frac{q}{Q}\right)^N\right)\right).$$
	Recall that \cite[Theorem 4.1]{PART1} provides
	$$\sum_{q\leqslant X}\left|S_{q,L,\blambda}(\boldsymbol{0})\right|\ll_\varepsilon L^{3+\varepsilon}X^{4+\varepsilon}.$$
	On applying \eqref{eq:Klostermann=3L} and using \begin{equation}\label{eq:CIwbd}
		\CI(w_R)\ll R^{-2}
	\end{equation} (cf. \cite[Proposition 6.5]{PART1}), we then have, 
	\begin{align*}
		&\sum_{q\leqslant QB^{-\varepsilon}}\frac{S_{q,L,\blambda}(\boldsymbol{0})I_{q,L,\blambda}(w_B;\boldsymbol{0})}{(qL)^4}\\ =&\sum_{T\nearrow QB^{-\varepsilon}}\frac{\CG(B,L,R;T)}{L^4}\\
		= &\frac{B^4}{L^8R^2}\left(\left(\sum_{q\leqslant QB^{-\varepsilon}}\frac{S_{q,L,\blambda}(\boldsymbol{0})}{q^4}\right)\CI(w_R)+O_{N,\varepsilon}\left(L^{3+\varepsilon}B^{-\varepsilon N}\right)\right)\\ =&\frac{B^4}{L^8R^2}\left(\left(\sum_{q=1}^{\infty}\frac{S_{q,L,\blambda}(\boldsymbol{0})}{q^4}\right)\CI(w_R)+O_{N,\varepsilon} \left(B^\varepsilon L^{4+\varepsilon}R^{-2} Q^{-1}+L^{3+\varepsilon}B^{-\varepsilon N}\right)\right).
	\end{align*}
	For the second alternative, by partial summation and on combining \eqref{eq:Klostermann=3} and Lemma \ref{le:Ic=0},
	\begin{align*}
		&\CG(B,L,R;T)\\ \ll &\left(\frac{B}{LR}\right)^4T\left(\sup_{T<t\leq 2T}\left|\sum_{T < q  \leq t}\frac{S_{q,L,\blambda}(\boldsymbol{0})}{q^3}\right|\right)\left(\sup_{T < q\leq 2T}\left|\frac{\widehat{\CI}_{q}(\widehat{w};\boldsymbol{0})}{q^2}\right|+\sup_{T < q\leq 2T}\left|\frac{\partial\widehat{\CI}_{q}(\widehat{w};\boldsymbol{0})}{q\partial q}\right|\right)\\ \ll_\varepsilon & \left(\frac{B}{LR}\right)^4 T^{-1+\varepsilon}L^{\frac{7}{2}+\varepsilon}\min(T,L)^\frac{1}{2}\ll \left(\frac{B}{LR}\right)^4 T^{-1+\varepsilon}L^{4+\varepsilon}.
	\end{align*} 
	Hence  \begin{align*}
		\sum_{q>QB^{-\varepsilon}}\frac{S_{q,L,\blambda}(\boldsymbol{0})I_{q,L,\blambda}(w_{B,R};\boldsymbol{0})}{(qL)^4} \ll_{\varepsilon} \left(\frac{B}{LR}\right)^4 B^\varepsilon Q^{-1}.
	\end{align*}
	We take $N$ large enough, so that the contribution of all error terms above is $$\ll_\varepsilon \left(\frac{B}{LR}\right)^4 B^{\varepsilon}Q^{-1} .$$
	
	It remains to analyse the leading constant $\sum_{q=1}^{\infty}\frac{S_{q,L,\blambda}(\boldsymbol{0})}{q^4}$.
	We factorise $$\zeta_{L,\blambda}(s):=\sum_{q=1}^{\infty}\frac{S_{q,L,\blambda}(\boldsymbol{0})}{q^s}$$ into an eulerian product. For each $q$ with $\gcd(q,2L\Delta_F)=1$ we have seen $S_{q,L,\blambda}(\boldsymbol{0})=S_q(\boldsymbol{0})$. Hence $\zeta_{L,\blambda}(s)$ has the same analytic continuation property as the function $\zeta(s,\boldsymbol{0})$ of \cite[Lemma 29]{H-Bdelta}, and an inspection of \cite[p. 195--p. 196]{H-Bdelta} reveals that $$\zeta_{L,\blambda}(s)=\BL(s-3,\psi_F)\nu_{L,\blambda}(s),$$ for some function $\nu_{L,\blambda}(s)$ that is holomorphic in the region $\Re(s)>\frac{7}{2}$. Hence  $$\sum_{q=1}^{\infty}\frac{S_{q,L,\blambda}(\boldsymbol{0})}{q^4}=\BL(1,\psi_F)\nu_{L,\blambda}(4)=\BL(1,\psi_F)\prod_{p}\left(1-\frac{\psi_F(p)}{p}\right)\CS_p(\CW;L,\blambda),$$ where $\CS_p(\CW;L,\blambda)$ is defined as $$\lim_{k\to\infty}\frac{\#\{\bv\bmod p^{k+2\operatorname{ord}_p(L)}:\bv\equiv \blambda\bmod p^{\operatorname{ord}_p(L)},F(\bv)\equiv 0\bmod p^{k+2\operatorname{ord}_p(L)}\}}{p^{3k}}.$$
	For $p\mid L$, the proof of \cite[Theorem 6.4]{PART1} shows that $$\CS_p(\CW;L,\blambda)=p^{6\operatorname{ord}_p(L)}\sigma_p(\CW;L,\bGamma),$$ and so we have proven that
	$$\sum_{q=1}^{\infty}\frac{S_{q,L,\blambda}(\boldsymbol{0})}{q^4}=L^6\widetilde{\mathfrak{S}}_{L,\bGamma}(\CW).$$ The proof is thus completed.
\end{proof}
\begin{remark}
    We can equally express the local density $\widetilde{\mathfrak{S}}_{L,\bGamma}(\CW)$ \eqref{eq:singser} in the classical way (i.e. without convergence factors) as in e.g. \cite[(2.2) and the paragraph above (3.4)]{BrowningM}. The advantage of the expression \eqref{eq:singser} is that it is absolutely convergent while the one used in  \cite{BrowningM} is only conditionally convergent. 
\end{remark}

\subsection{The Tamagawa number $\mathfrak{S}_{L,\bLambda}(\CV)$}
\begin{proposition}\label{prop:TamagawaLLambda}
	We have
	$$\mathfrak{S}_{L,\bLambda}(\CV)=\frac{\phi(L)}{\BL(2,\chi_0[L])}\widetilde{\mathfrak{S}}_{L,\bGamma}(\CW).$$ Moreover, $$L^{-2-\varepsilon}\ll_\varepsilon\mathfrak{S}_{L,\bLambda}(\CV)\ll L^{-2}.$$
\end{proposition}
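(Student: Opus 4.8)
The plan is to derive the claimed identity by comparing two expressions for the relevant Tamagawa measure: the definition of $\mathfrak{S}_{L,\bLambda}(\CV)$ as a product of local densities on the projective quadric $\CV$ (with Peyre-style convergence factors and the naive metric), and the modified singular series $\widetilde{\mathfrak{S}}_{L,\bGamma}(\CW)$ on the affine cone $\CW$ defined in \eqref{eq:singser}. The key geometric input is the cone map $\CW^o\to\CV$, which is a $\mathbb{G}_m$-torsor; locally at each prime $p$ this relates the $p$-adic point count on $\CW$ modulo $p^k$ to the $p$-adic point count on $\CV$ modulo $p^k$, the discrepancy being a factor accounting for the scaling variable and the projectivisation (roughly a factor $p^k(1-p^{-1})$ at good primes, modified at $p\mid L$ by the zoom condition $\bGamma$ versus $\bLambda$). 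First I would write both sides as Euler products over $p$ and show the local factors agree up to the global constant $\phi(L)/\BL(2,\chi_0[L])$; the $\BL(2,\chi_0[L])=\prod_{p\nmid L}(1-p^{-2})$ in the denominator is exactly the product of the convergence factors $(1-p^{-2})^{-1}$ that Peyre's metric attaches on $\CV$ but which are absent (after the $\BL(1,\psi_F)$ normalisation) from $\widetilde{\mathfrak{S}}_{L,\bGamma}(\CW)$, while $\phi(L)=L\prod_{p\mid L}(1-p^{-1})$ collects the scaling-variable contributions at the bad primes where the zoom condition sits. I would lean on the local computations already carried out in the proof of Proposition \ref{prop:c=0L} (in particular the relation $\CS_p(\CW;L,\blambda)=p^{6\operatorname{ord}_p(L)}\sigma_p(\CW;L,\bGamma)$ and the eulerian factorisation via \cite[Lemma 29]{H-Bdelta}) and on the corresponding local computation for $\CV$ from \cite[\S6]{PART1} or \cite{Peyre}; the reconciliation is essentially bookkeeping of convergence factors, which is why the statement says this ``does not seem to have been addressed before''—it is routine but has to be done carefully.

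For the second assertion, the two-sided bound $L^{-2-\varepsilon}\ll_\varepsilon \mathfrak{S}_{L,\bLambda}(\CV)\ll L^{-2}$, I would argue directly from the first identity. The upper bound is immediate: $\phi(L)\ll L$, $\BL(2,\chi_0[L])\gg 1$ uniformly in $L$ (it is a partial Euler product of $\zeta(2)^{-1}$, bounded below by an absolute positive constant since the omitted factors are all $>1$), and $\widetilde{\mathfrak{S}}_{L,\bGamma}(\CW)\ll L^{-4}\cdot L^{2}$—more precisely one reads off from \eqref{eq:singser} that the product of the finitely many bad local densities $\sigma_p(\CW;L,\bGamma)$ contributes at most $O(L^{-3})$ relative to the unconditioned density, so together with $\phi(L)\asymp L$ one lands at $O(L^{-2})$; the convergence of the infinite product $\BL(1,\psi_F)\prod_p(1-\psi_F(p)/p)\sigma_p(\CW)$ over the good primes is absolute and independent of $L$. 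For the lower bound $L^{-2-\varepsilon}$, the point is that the bad local densities cannot be too small: each $\sigma_p(\CW;L,\bGamma)>0$ since $\bGamma$ lifts to a $\BZ_p$-point of $\CW^o$ (which exists because $V$ has local points everywhere, by the hypothesis in the ambient theorems), and Hensel-type lower bounds give $\sigma_p(\CW;L,\bGamma)\gg_\varepsilon p^{-3\operatorname{ord}_p(L)-\varepsilon}$ uniformly; multiplying over $p\mid L$ produces at worst an $L^{-3-\varepsilon}$, and $\phi(L)\gg_\varepsilon L^{1-\varepsilon}$ supplies the remaining factor, giving $\gg_\varepsilon L^{-2-\varepsilon}$.

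The main obstacle is the first part: getting the bookkeeping of convergence factors exactly right so that the global constant comes out as precisely $\phi(L)/\BL(2,\chi_0[L])$ rather than, say, $\phi(L)/(L^2\prod_{p\mid L}(\cdots))$ times an ambiguous constant—this requires being scrupulous about which normalisation of the Tamagawa measure on $\CV$ is in force (Peyre's, with the naive metric, as stated), how the degree-$2$ line bundle $\mathcal{O}(1)$ enters, and how the quotient by $\mathbb{G}_m$ interacts with the $p$-adic volumes at the primes dividing $L$ where the zoom condition is imposed. Once the local identity $v_p(\CV;\bLambda)=\frac{\phi_p(L)}{(1-p^{-2})^{[p\nmid L]}}\cdot(\text{local factor of }\widetilde{\mathfrak{S}})$ is pinned down at every $p$, taking the product and invoking $\BL(2,\chi_0[L])=\prod_{p\nmid L}(1-p^{-2})$ finishes it. The uniformity in $L$ in the final bound is then a soft consequence, since all the $\varepsilon$-losses come only from the divisor-type and $\phi$-type estimates, which are standard.
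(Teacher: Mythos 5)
Your overall strategy—comparing Euler products via the $\BG_m$-torsor $\CW^o\to\CV$—is the one the paper uses, and the structural observations about $\phi(L)$ handling the $p\mid L$ primes and the local transition factors handling the good primes are correct. Your lower-bound argument for the second assertion is also essentially what the paper refers to.

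However, there is a genuine confusion in the central step that you yourself flag as ``the main obstacle.'' You assert that $\BL(2,\chi_0[L])$ is ``exactly the product of the convergence factors $(1-p^{-2})^{-1}$ that Peyre's metric attaches on $\CV$.'' This is not what happens. Peyre's convergence factors for the non-split quadric surface are $\lambda_p=\BL_p(1,\Pic(V_{\overline{\BQ}}))$, and since $\Pic(V_{\overline{\BQ}})\simeq\BZ^2$ with Frobenius acting trivially at split primes and by the swap at inert primes, one has $\BL_p(s,\Pic(V_{\overline{\BQ}}))=(1-\psi_F(p)/p^s)^{-1}(1-1/p^s)^{-1}$, so $\lambda_p^{-1}=(1-1/p)(1-\psi_F(p)/p)$, not $(1-p^{-2})$. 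The factors $(1-p^{-2})$ in $\BL(2,\chi_0[L])^{-1}=\prod_{p\nmid L}(1-p^{-2})$ instead arise from the \emph{local} relation between the affine and projective densities at good primes, namely $(1-p^{-1})\sigma_p(V)=(1-p^{-2})\sigma_p(\CW)$, i.e.\ from the $\BG_m$-quotient; they are consumed in converting $\sigma_p(\CW)$ into $\sigma_p(V)$, after which one arrives at
$$\BL(1,\psi_F)\prod_{p}(1-p^{-1})\bigl(1-\psi_F(p)p^{-1}\bigr)\,\sigma_p(\CV;L,\bLambda),$$
and it is this last expression that one must then match against Peyre's prescription, which requires the explicit computation of $\BL_p(s,\Pic(V_{\overline{\BQ}}))$ and the observation that $\lim_{s\to1}(s-1)\BL_S(s,\Pic(V_{\overline{\BQ}}))$ supplies the residue while the $\lambda_p^{-1}$ supply precisely $(1-1/p)(1-\psi_F(p)/p)$. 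Your account collapses two logically distinct steps (the $\CW\to V$ transition and Peyre's convergence-factor normalisation) into one and mislabels the latter, so the bookkeeping would not actually close.

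A second, smaller omission: the paper's proposition is making the point that the leading constant \emph{agrees with Peyre's prediction}, which also requires noting $H^1(\BQ,\Pic(V_{\overline{\BQ}}))=0$ so that $\beta(V)=1$; your proposal does not mention this, and without it the comparison to Peyre's measure is incomplete. The Artin $\BL$-function computation (the split/inert dichotomy for $\operatorname{Fr}_p$ acting on $\Pic$) is the substantive content here and should be carried out explicitly rather than relegated to ``bookkeeping.''
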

\begin{proof}
	Recalling \eqref{eq:singser}, we compute
\begin{multline*}
	\frac{\phi(L)}{\BL(2,\chi_0[L])}\widetilde{\mathfrak{S}}_{L,\bGamma}(\CW) =\BL(1,\psi_F)\prod_{p\mid L}p^{\operatorname{ord}_p(L)}(1-p^{-1})(1-\psi_F(p)p^{-1})\sigma_p(\CW;L,\bGamma) \\ \times \prod_{p\nmid L}(1-p^{-2})(1-\psi_F(p)p^{-1})\sigma_p(\CW).
\end{multline*} 
Here we write $\sigma_p(\CW)= \sigma_p(\CW;L,\bGamma)$ if $p\nmid L$. Arguing as in \cite[Proof of Proposition 6.5]{PART1}, we may assume that $L_0\mid L$ for a certain fixed $L_0\in\BN$ such that $\CV\mod L$ is regular. Then we have
$$p\mid L\Rightarrow\sigma_{p}(\CV;L,\boldsymbol{\Lambda})=p^{\operatorname{ord}_p(L)}\sigma_p(\CW;L,\bGamma),$$
\begin{equation}\label{eq:sigmaVsigmaW}
	p\nmid L\Rightarrow (1-p^{-1})\sigma_p(V)=(1-p^{-2})\sigma_p(\CW).
\end{equation}
We define $\sigma_{p}(\CV;L,\boldsymbol{\Lambda})$ and $\sigma_p(V)$ as in \cite[Proposition 6.5]{PART1}. So the above factor equals
$$\BL(1,\psi_F)\prod_{p\mid L}(1-p^{-1})(1-\psi_F(p)p^{-1})\sigma_p(\CV;L,\boldsymbol{\Lambda})\prod_{p\nmid L}(1-p^{-1})(1-\psi_F(p)p^{-1})\sigma_p(V).$$
Admitting the equality in the Proposition, the upper and lower bounds for $\mathfrak{S}_{L,\bLambda}(\CV)$ are now clear (again comparig with \cite[Proposition 6.5]{PART1}).

Fix $S$ a finite set of places containing $\BR$ and the primes dividing $2\Delta_F$. 
For $p\not\in S$, let 	$$\BL_p(s,\operatorname{Pic}(V_{\overline{\BQ}})):=\frac{1}{\det(1-p^{-s}\operatorname{Fr}_p|\Pic(V_{\overline{\BQ}})_\BQ)}$$ be the $p$-adic Artin L-function. Here the Frobenius $\operatorname{Fr}_p$ acts on $\Pic(V_{\overline{\BQ}})\simeq \Pic(\CV_{\overline{\BF_p}})$. Let us define the convergence factors
$$\lambda_p:=\begin{cases}
	1 & \text{ if } p\in S;\\ \BL_p(1,\Pic(V_{\overline{\BQ}})) &\text{ if }p\not\in S.
\end{cases}$$ The (global) Artin $\BL$-function is $$\BL_S(s,\Pic(V_{\overline{\BQ}})):=\prod_{p\not\in S}\BL_p(s,\Pic(V_{\overline{\BQ}})).$$ It admits a pole at $s=1$ of order $\operatorname{rank}\Pic(V)$ (\cite[Lemme 2.2.5]{Peyre}).
Recall that (\cite[Définition 2.2]{Peyre}) the finite Tamagawa measure of $\CV(\widehat{\BZ})$ with congruence condition $(L,\bLambda)$ is $$\mathfrak{S}_{L,\bLambda}(\CV)=\left(\lim_{s\to 1}(s-1)\BL_S(s,\Pic(V_{\overline{\BQ}}))\right)\prod_{p<\infty}\lambda_p^{-1}\sigma_{p}(\CV;L,\boldsymbol{\Lambda}).$$

Let us now compute these Artin $\BL$-functions more closely. If any non-ramified $p$ is split in $\BQ(\sqrt{\Delta_F})$, which is equivalent to $\psi_F(p)=1$, then the action of $\operatorname{Fr}_p$ is trivial and hence $$\BL_p(s,\operatorname{Pic}(V_{\overline{\BQ}}))=\left(1-\frac{1}{p^{s}}\right)^{-2}.$$ While for inert $p$ ($\Leftrightarrow\psi_F(p)=-1$), $\operatorname{Fr}_p$ acts via exchanging the two generators of $\Pic(\CV_{\overline{\BF_p}})$, and so $$\BL_p(s,\operatorname{Pic}(V_{\overline{\BQ}}))=\left(1-\frac{1}{p^{2s}}\right)^{-1}.$$ 
We conclude that if $p$ is unramified, then we have 
$$\BL_p(s,\operatorname{Pic}(V_{\overline{\BQ}}))=\left(1-\frac{\psi_F(p)}{p^s}\right)^{-1}\left(1-\frac{1}{p^s}\right)^{-1}.$$
Now we easily deduce the accordance of the non-archimedean arithmetic factor with $\mathfrak{S}_{L,\bLambda}(\CV)$. 
Finally we note that (for example by direct computation) $H^1(\BQ,\Pic(V_{\overline{\BQ}}))=0$
, and hence $\beta(V)=1$. So we finally confirm Peyre's prediction.

\end{proof}

\section{Counting points with real conditions}\label{se:realzoom}
In this section we always assume that no congruence condition is imposed.
We thus take $L=1,\blambda=\boldsymbol{0}$ and omit them in what follows. Hence we denote the counting function \eqref{eq:countingW} (applied to $w_{B,R}$) by $\CN_{\CW}(w_{B,R})$. 
\begin{theorem}\label{thm:realzoom}
Assume that $R^2=O(B^{1-\tau})$ for some $-1<\tau<1$. Then
		$$\CN_{\CW}(w_{B,R})=B^{2}\CI(w_R)\widetilde{\mathfrak{S}}(\CW)+O_\varepsilon\left(\frac{B^{2-\tau+\varepsilon}}{R^2}\right),$$ where $\CI(w_R)$ and $\widetilde{\mathfrak{S}}(\CW)$ are defined respectively by \eqref{eq:weightedsingint} and \eqref{eq:singser}.

\end{theorem}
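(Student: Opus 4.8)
The plan is to specialise the Poisson-summed identity \eqref{eq:poisson} to $L=1$, $\blambda=\boldsymbol{0}$ — so that $Q=B/R$ — and to split the resulting double sum into the contribution of the diagonal frequency $\bc=\boldsymbol{0}$ (which will produce the stated main term) and that of all $\bc\neq\boldsymbol{0}$ (which I must show is absorbed into the error term $O_\varepsilon(B^{2-\tau+\varepsilon}R^{-2})$). For $\bc=\boldsymbol{0}$ I would simply quote Proposition \ref{prop:c=0L} with $L=1$ and $\bGamma=\boldsymbol{0}$: multiplying its conclusion by the prefactor $C_Q/Q^2=(R^2/B^2)(1+O_N(Q^{-N}))$ and using $Q=B/R$, the leading term becomes exactly $B^2\,\CI(w_R)\,\widetilde{\mathfrak{S}}(\CW)$, its error becomes $O_\varepsilon(B^{1+\varepsilon}/R)$, and the tail from $C_Q$ becomes $O_N(B^2(R/B)^NR^{-2})$. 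Since $R^2=O(B^{1-\tau})$ forces $R\ll B^{(1-\tau)/2}\leq B^{1-\tau}$ (as $\tau<1$) and $\tau>-1$ makes $R/B$ a negative power of $B$, both of these are $O_\varepsilon(B^{2-\tau+\varepsilon}R^{-2})$ once $N$ is taken large.

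The substance of the argument is the sum over $\bc\neq\boldsymbol{0}$. By \eqref{eq:IJ}, $I_q(w_{B,R};\bc)=|\det M|^{-1}Q^4\,\widehat{\CI}_q(\widehat{w};\bd)$ with $\bd=\bd(B,R,1;\bc)$ given by \eqref{eq:d}, so this part is $\ll Q^2\sum_{q\ll Q}q^{-4}\sum_{\bc\neq\boldsymbol{0}}|S_q(\bc)|\,|\widehat{\CI}_q(\widehat{w};\bd)|$. The rapid decay of $\widehat{\CI}_q$ in Lemma \ref{le:easy} lets me discard, up to a negligible error, all $\bc$ with $|\bd|\gg Q^{1+\varepsilon}$, so it suffices to sum over the fixed ($M$-dependent) lattice $\Lambda:=\{\bd(B,R,1;\bc):\bc\in\BZ^4\}$ — of covolume $\asymp Q^4$ and, because of the real zoom, strongly anisotropic, its shortest nonzero vector having length $\gg B/R^2$ — intersected with the ball $|\bd|\ll Q^{1+\varepsilon}$. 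I then distinguish the cases $F^*(\bc)\neq 0$ and $F^*(\bc)=0$.

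When $F^*(\bc)\neq 0$, Proposition \ref{prop:fstarcneq0} with $L=1$ gives the mean-value bound $\sum_{q\leq X}|S_q(\bc)|\ll_\varepsilon X^{3+\varepsilon}|\bc|^\varepsilon$ — exactly the point where the sharpened estimate (the ``double Kloosterman refinement'') is indispensable, the trivial $|S_q(\bc)|\ll q^3$ being far too lossy to reach the optimal exponent. Fed, via partial summation in $q$, into the bounds for $\widehat{\CI}_q$ and $\partial_q\widehat{\CI}_q$ of Lemmas \ref{le:easy} and \ref{le:hard}, the inner $q$-sum is $\ll_\varepsilon B^\varepsilon|\bd|^{-1+\varepsilon}$ for each fixed $\bc$ (with extra decay once $|\bd|\gg Q$); it then remains to estimate $Q^2\sum_{\boldsymbol{0}\neq\bd\in\Lambda,\,|\bd|\ll Q^{1+\varepsilon}}|\bd|^{-1+\varepsilon}$, which a dyadic lattice-point count — exploiting lower bounds for the covolumes of the sublattices of $\Lambda$, equivalently controlling its successive minima as governed by the scaling shape $B/R^2\leq B/R\leq B/R\leq B$ — is seen to be $\ll_\varepsilon B^{1+\varepsilon}$, hence $\ll_\varepsilon B^{2-\tau+\varepsilon}R^{-2}$. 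When instead $F^*(\bc)=0$ and $\bc\neq\boldsymbol{0}$, decompose $q=q_1q_2$ as in \eqref{eq:qdecomp}: by \eqref{eq:RqcFstarc=0} the coprime part is $S_q^{(1)}(\bc)=q_1^3\theta_1(q_1)\psi_F(q_1)$ while $|S_q^{(2)}(\bc)|\ll q_2^3$ by \eqref{eq:S2bd}; since $\Delta_F\neq\square$, the character $\psi_F$ — hence $\chi_0[2\Delta_F]\psi_F$ — is non-principal, so Lemma \ref{prop:fstarc=0} supplies genuine cancellation in the $q_1$-sum, and combining this with \eqref{eq:S2bd}, Lemmas \ref{le:easy}--\ref{le:hard} and the same lattice-point sum over $\bc\in\Lambda$ as before yields a contribution $\ll_\varepsilon B^{2-\tau+\varepsilon}R^{-2}$ with no surviving secondary main term of size $\asymp B^2\log B$ — the non-appearance of such a term being precisely what the hypothesis $\Delta_F\neq\square$ buys (for a square discriminant $\psi_F$ would be principal and such a term would materialise). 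Summing the four pieces proves the theorem.

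The principal obstacle I anticipate is the off-diagonal estimate in the case $F^*(\bc)\neq 0$: one has to use the full strength of Proposition \ref{prop:fstarcneq0} and simultaneously run a lattice-point count in the heavily skewed dual box $\{|\bd|\ll Q^{1+\varepsilon}\}$ with all implied constants uniform in the fixed Witt matrix $M$, and the bookkeeping must be just tight enough to reach the optimal error exponent across the whole admissible range $R^2=O(B^{1-\tau})$, $\tau<1$ — this is where the refinements of \cite{H-Bdelta} and \cite{PART1} announced in the introduction are brought to bear.
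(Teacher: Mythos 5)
Your proposal mirrors the paper's proof of Theorem~\ref{thm:realzoom} essentially step by step: Proposition~\ref{prop:c=0L} (with $L=1$) handles $\bc=\boldsymbol{0}$, and for $\bc\neq\boldsymbol{0}$ you distinguish $F^*(\bc)\neq 0$ (via the averaged bound of Proposition~\ref{prop:fstarcneq0}) from $F^*(\bc)=0$ (via the cancellation coming from $\psi_F$ being non-principal, which is precisely the content of Lemma~\ref{prop:Kloostermann=3}), feeding both into partial summation against the oscillatory-integral estimates of Lemmas~\ref{le:easy}--\ref{le:hard}. The only difference is presentational: you package the final $\bc$-sum as a geometry-of-numbers count over the skewed lattice $\Lambda=\{\bd\}$ with successive minima $\asymp(B/R^2,B/R,B/R,B)$, whereas the paper truncates $\bc$ to the box \eqref{eq:cond0} and then invokes the observation $|\bd|\gg B^\tau|\bc|$ of \eqref{eq:dc} together with $\sum_{\bc:\eqref{eq:cond0}}|\bc|^{-1}\ll_\varepsilon B^\varepsilon$ --- two parametrizations of the same estimate, yielding the same final error $O_\varepsilon(B^{2-\tau+\varepsilon}R^{-2})$.
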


\begin{remarks}
	\begin{enumerate}
		\item In a similar way to \cite[Theorem 2.3]{PART1}, Theorem \ref{thm:realzoom} provides an asymptotic formula when $0<\tau<1$, while it is an upper bound when $-1<\tau\leqslant 0$.
		\item If one regards the real condition as fixed, i.e., we take $\tau=1$ so that $R=O(1)$, then Theorem \ref{thm:realzoom} also improves upon the error term  $O(B^{\frac{3}{2}})$ of \cite[Theorem 6]{H-Bdelta} (see also \cite{Lindqvist} the comments after Corollary 1.4).
	\end{enumerate}
\end{remarks}
\subsection{More on $S_q(\bc)$}
In this case the quadratic exponential sums $S_{q,L,\blambda}(\bc)$ \eqref{eq:Sqc} simplify to \begin{equation}\label{eq:sqc}
	S_q(\bc):= \sum_{\bb \in (\BZ/q\BZ)^{4}}\sum_{\substack{a\bmod q\\(a,q)=1}} e\left( \frac{aF(\bb) + \bc\cdot \bb}{q}\right).
\end{equation}
We show that in this case the Poisson variables $\bc$ satisfying $F^*(\bc)=0$ have a negligible contribution.
\begin{lemma}\label{prop:Kloostermann=3}
	Uniformly for any $\bc\in\BZ^4$ with $F^*(\bc)=0$, we have $$\sum_{q\leqslant X}\frac{S_q(\bc)}{q^3}\ll_\varepsilon X^{\varepsilon}.$$
\end{lemma}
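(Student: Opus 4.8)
The statement concerns the sum $\sum_{q\le X}\frac{S_q(\bc)}{q^3}$ for $\bc$ with $F^*(\bc)=0$ in the no-congruence case ($L=1$, $\blambda=\mathbf 0$). Here $S_q(\bc)$ is fully multiplicative in $q$ by the Chinese Remainder Theorem, so the natural approach is to factor the associated Dirichlet series $Z(s;\bc):=\sum_{q\ge 1}S_q(\bc)q^{-s}$ into an Euler product and identify its analytic behaviour near $s=3$. The key input is the formula \eqref{eq:RqcFstarc=0}: when $\gcd(q,2\Delta_F)=1$ and $F^*(\bc)=0$ we have, directly from \eqref{eq:S1R}–\eqref{eq:Rqc} with $L=1$ (so $q_1=q$), that $S_q(\bc)=\CR(q;\bc)=q^3\theta_1(q)\psi_F(q)$, which is \emph{independent of $\bc$} among such $q$. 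Thus for the ``generic'' part of the Euler product, $\sum_{q:\,\gcd(q,2\Delta_F)=1}\frac{S_q(\bc)}{q^3}=\sum_n \chi_0[2\Delta_F](n)\psi_F(n)\theta_1(n)n^{0}$, whose partial sums are $O_\varepsilon(X^\varepsilon)$ by Lemma \ref{prop:fstarc=0} applied with $\chi=\chi_0[2\Delta_F]$ (note $\chi\psi_F$ is non-principal since $\psi_F$ is non-principal and $2\Delta_F$-divisible moduli do not change that). So the generic Euler factors contribute a bounded-by-$X^\varepsilon$ quantity regardless of $\bc$.

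**Main step: the bad primes.** The remaining primes are those dividing $2\Delta_F$, a \emph{fixed finite set} depending only on $F$. For each such prime $p$, the local factor $Z_p(s;\bc)=\sum_{k\ge0}S_{p^k}(\bc)p^{-ks}$ must be shown to converge absolutely for $\Re(s)$ slightly above $3$, with a bound uniform in $\bc$. The uniform bound from Proposition \ref{prop:fstarcneq0} (with $L=1$) gives $S_{p^k}(\bc)\ll p^{3k}$, i.e. $|S_{p^k}(\bc)/p^{3k}|\ll 1$, which only gives a geometric series that \emph{barely} diverges at $s=3$. To get a genuine power saving I would instead invoke the harder square-root-type cancellation available for quadratic exponential sums of full rank: after completing the square (possible up to the fixed $2\Delta_F$-local obstruction), $S_{p^k}(\bc)$ reduces to a product of Gauss sums times a linear-exponential sum in $\bc$, and when $F^*(\bc)=0$ one can analyze exactly which powers $p^k$ survive. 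The cleanest route is to observe, as in \cite{H-Bdelta} (Lemma 29 and pp. 195--196) and as already used in Proposition \ref{prop:c=0L}, that $Z(s;\bc)$ inherits the same analytic continuation as $\zeta(s,\mathbf 0)$: it equals $\BL(s-3,\psi_F)$ times a function $\nu(s;\bc)$ holomorphic and \emph{bounded uniformly in $\bc$} on $\Re(s)\ge 7/2$. Since $\BL(s-3,\psi_F)$ is entire (as $\psi_F$ is non-principal) and of polynomial growth in vertical strips, $Z(s;\bc)$ is holomorphic on, say, $\Re(s)\ge 7/2$.

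**Conclusion via a contour shift / Perron.** With $Z(s;\bc)$ holomorphic and polynomially bounded on a half-plane strictly to the right of $s=3$ — or, more elementarily, by a direct hyperbola-type splitting $q=q_2q_1$ with $q_2\mid(2\Delta_F)^\infty$ and $\gcd(q_1,2\Delta_F)=1$ exactly mirroring the double-Kloosterman computation in \eqref{eq:Klostermann=3} but now with $L=1$ (so the $q_2$-sum is over divisors of a \emph{fixed} integer's powers, giving $O_\varepsilon(X^\varepsilon)$) — partial summation yields $\sum_{q\le X}S_q(\bc)q^{-3}\ll_\varepsilon X^\varepsilon$. Concretely: write $S_q(\bc)=\CU(q_2;\bc)\,q_1^3\theta_1(q_1)\psi_F(q_1)$ with $|\CU(q_2;\bc)|\ll q_2^{3}$ from \eqref{eq:S2bd} (with $L=1$), so $\sum_{q\le X}\frac{S_q(\bc)}{q^3}\le \sum_{q_2\mid(2\Delta_F)^\infty}\frac{|\CU(q_2;\bc)|}{q_2^3}\bigl|\sum_{q_1\le X/q_2,\,(q_1,2\Delta_F)=1}\chi_0[2\Delta_F](q_1)\psi_F(q_1)\theta_1(q_1)\bigr|\ll_\varepsilon X^\varepsilon\sum_{q_2\mid(2\Delta_F)^\infty,\,q_2\le X}1\ll_\varepsilon X^\varepsilon$, using Lemma \ref{prop:fstarc=0} on the inner sum and the fact that the number of $q_2\le X$ dividing a power of the fixed integer $2\Delta_F$ is $O_\varepsilon(X^\varepsilon)$.

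**Expected obstacle.** The only delicate point is the uniformity in $\bc$ of the local factors at the bad primes $p\mid 2\Delta_F$ — one must be sure that the bound $|\CU(q_2;\bc)|\ll q_2^3$ (equivalently, that $S^{(2)}$ with $L=1$ satisfies \eqref{eq:S2bd} with $\gcd(q_2,L)=1$) holds with no hidden dependence on $\bc$, and that $F^*(\bc)=0$ does not somehow conspire with the ramified primes to inflate $\CU(q_2;\bc)$. This is exactly what Proposition \ref{prop:fstarcneq0}'s uniform bound (and the $L=1$ specialization of \eqref{eq:S2bd}) guarantees, so in fact no new work beyond citing those is needed; the proof is a short assembly of \eqref{eq:RqcFstarc=0}, Lemma \ref{prop:fstarc=0}, and the multiplicative splitting.
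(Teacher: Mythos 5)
Your proof is correct and, once you discard the detour through analytic continuation of $Z(s;\bc)$ and Perron's formula, the ``concretely'' portion is exactly the paper's argument: split $q=q_1q_2$ with $\gcd(q_1,2\Delta_F)=1$ and $q_2\mid(2\Delta_F)^\infty$ via the multiplicativity of $S_q(\bc)$, evaluate $S_{q_1}(\bc)=q_1^3\theta_1(q_1)\psi_F(q_1)$ on the generic part via \eqref{eq:S1R}--\eqref{eq:Rqc} with $F^*(\bc)=0$, apply Lemma~\ref{prop:fstarc=0} to get $X^\varepsilon$ from the $q_1$-sum, and bound the $q_2$-sum by the uniform estimate $S_{q_2}(\bc)\ll q_2^3$ together with the fact that the number of $q_2\le X$ dividing a power of the fixed integer $2\Delta_F$ is $O_\varepsilon(X^\varepsilon)$. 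The uniformity-in-$\bc$ concern you flag at the bad primes is resolved exactly as you say, so no new work is needed.
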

\begin{remark}
	In our setting \cite[Lemma 30]{H-Bdelta} provides $\sum_{q\leqslant X} S_q(\bc)\ll_\varepsilon X^{\frac{7}{2}+\varepsilon}(1+|\bc|)$ whenever $F^*(\bc)=0$. So Lemma \ref{prop:Kloostermann=3} improves on the estimate that one would obtain from \cite[Lemma 30]{H-Bdelta} in saving a factor of $|\bc|$ and an $X^{\frac{1}{2}}$-factor.
\end{remark}
\begin{proof}
	Our proof is a direct analogue of the treatment of \eqref{eq:Klostermann=3}.
	According to \cite[Lemma 23 and p. 193--p. 194]{H-Bdelta}, $S_q(\bc)$ is multiplicative for any $\bc$:
	\begin{equation}\label{eq:multiplicative}
		S_{q_1q_2}(\bc)=S_{q_1}(\bc)S_{q_2}(\bc),\quad \text{whenever } \gcd(q_1,q_2)=1.
	\end{equation}
	
	If $F^*(\bc)=0$, then by \eqref{eq:S1R}, \eqref{eq:Rqc}, and \eqref{eq:S2bd}, we have $$S_{q}(\bc)\begin{cases}
		 =q^3\theta_1(q)\psi_F(q)&\text{ if } \gcd(q,2\Delta_F)=1;\\  \ll q^3 &\text{ if } q\mid (2\Delta_F)^\infty.
	\end{cases}$$  
Then by Lemma \ref{prop:fstarc=0} (with $\chi$ being principal),
$$\sum_{\substack{q\leqslant Y\\ \gcd(q,2\Delta_F)=1}}\frac{S_q(\bc)}{q^3}=\sum_{\substack{n\leqslant Y\\ \gcd(n,2\Delta_F)=1}}\psi_F(n)\theta_1(n)\ll_\varepsilon Y^\varepsilon.$$
We thus have, thanks to the multiplicativity \eqref{eq:multiplicative}, 
\begin{align*}
	\sum_{q\leqslant X}\frac{S_q(\bc)}{q^3}&=\sum_{\substack{q_2\leqslant X\\ q_2\mid (2\Delta_F)^\infty}}\frac{S_{q_2}(\bc)}{q_2^3}\sum_{\substack{q_1\leqslant \frac{X}{q_2}\\ (q_1,2\Delta_F)=1}}\frac{S_{q_1}(\bc)}{q_1^3}\\ &\leqslant \sum_{\substack{q_2\leqslant X\\ q_2\mid (2\Delta_F)^\infty}}\left|\frac{S_{q_2}(\bc)}{q_2^3}\right|\left|\sum_{\substack{q_1\leqslant \frac{X}{q_2}\\ (q_1,2\Delta_F)=1}}\frac{S_{q_1}(\bc)}{q_1^3}\right|\\ &\ll_\varepsilon X^\varepsilon\sum_{\substack{q_2\leqslant X\\ q_2\mid (2\Delta_F)^\infty}}1\ll_\varepsilon X^\varepsilon.
\end{align*}
The proof is thus completed.
\end{proof}
\subsection{Proof of Theorem \ref{thm:realzoom}}
Our assumption $-1<\tau<1$ guarantees that $Q\gg B^{\frac{1+\tau}{2}}\gg 1$. In view of \eqref{eq:poisson} and \eqref{eq:IJ}, the contribution from all $\bc\neq\boldsymbol{0}$ is
\begin{equation}\label{eq:Rcneq0}
	\frac{C_Q}{Q^2}\sum_{q=1}^{\infty}\sum_{\bc\in\BZ^4\setminus\boldsymbol{0}}\frac{S_{q}(\bc)I_{q}(w_{B,R};\bc)}{q^{4}}\ll \frac{B^2}{R^2}\left|\sum_{q=1}^{\infty}\sum_{\bc\in\BZ^4\setminus\boldsymbol{0}}\frac{S_{q}(\bc)\widehat{\CI}_{q}(\widehat{w};\boldsymbol{d})}{q^4}\right|.
\end{equation}
When $|\boldsymbol{d}| \gg QB^{\varepsilon}$ for some $\varepsilon >0$, Heath-Brown's ``simple estimate'' (see Lemma \ref{le:easy}) can be applied (with $N$ sufficiently large in terms of $\varepsilon$) to obtain an arbitrary power saving. Recalling \eqref{eq:d} and arguing as in \cite[Section 5, Step I]{PART1}, it remains to consider $\bc$ satisfying \begin{equation}\label{eq:cond0}
	0\neq|\bc|\leqslant B^{\frac{1}{2}(1-\tau)+\varepsilon},\quad |\widetilde{c_0}|\leqslant \frac{B^{\varepsilon}}{R}\quad \text{and}\quad |\widetilde{c_j}|\leqslant B^\varepsilon \quad \text{for all }j=2,3.
\end{equation} 
 
 For a dyadic parameter $1\ll T\ll Q$, we claim that in both cases $F^*(\bc)\neq 0$ and $F^*(\bc)=0$, we have
\begin{equation}\label{eq:fstarc=0}
	\sum_{q\sim T}\frac{S_{q}(\bc)\widehat{\CI}_{q}(\widehat{w};\boldsymbol{d})}{q^4}\ll_\varepsilon\frac{B^\varepsilon T^\varepsilon}{|\bd|^{1-\varepsilon}}.
\end{equation} Here we recall the vector $\bd=\bd(B,R;\bc)$ from \eqref{eq:d} (with $L=1$).  Indeed, it follows directly from partial summation that the quantity to be estimated is
\begin{align*}
	\ll &T\left(\sup_{t\sim T}\left|\sum_{T\leqslant q<t}\frac{S_q(\bc)}{q^3}\right|\right)\left(\sup_{q\sim T}\left|\frac{\widehat{\CI}_{q}(\widehat{w};\boldsymbol{d})}{q^2}\right|+\sup_{q\sim T}\left|\frac{\partial\widehat{\CI}_{q}(\widehat{w};\boldsymbol{d})}{q\partial q}\right|\right).
\end{align*}
 Applying Proposition \ref{prop:fstarcneq0} when $F^*(\bc)\neq 0$ and Lemma \ref{prop:Kloostermann=3} when $F^*(\bc)=0$ yields that the quantity in the first bracket is $\ll_{\varepsilon} (1+|\bc|^\varepsilon) T^\varepsilon\ll B^\varepsilon T^\varepsilon$. Applying Lemma \ref{le:hard} to $\widehat{\CI}_{q}(\widehat{w};\boldsymbol{d})$ and its derivative yields that the quantity in the second bracket is $\ll_\varepsilon \frac{Q^\varepsilon}{T|\bd|^{1-\varepsilon}}$. We thus get the desired bound \eqref{eq:fstarc=0}.

We use the simple observation \begin{equation}\label{eq:dc}
	|\bd|\gg B^{\tau}|\bc|
\end{equation} throughout. 
It follows from \eqref{eq:fstarc=0} that  \begin{equation}\label{eq:sumcnot0}
	\sum_{\substack{\bc:\eqref{eq:cond0} \text{ holds}}}\sum_{q\ll Q}\frac{S_{q}(\bc)\widehat{\CI}_{q}(\widehat{w};\boldsymbol{d})}{q^{4}}\ll_\varepsilon B^{-\tau+\varepsilon}\sum_{\bc:\eqref{eq:cond0}\text{ holds}}|\bc|^{-1}\ll_{\varepsilon} B^{-\tau+\varepsilon}.
\end{equation}
Therefore, on inserting \eqref{eq:sumcnot0} back to \eqref{eq:Rcneq0}, the contribution from $\bc\neq\boldsymbol{0}$ is  $\ll_\varepsilon \frac{B^{2-\tau+\varepsilon}}{R^2}$ and goes to the remainder term. 

Finally, the condition $R^2\ll B^{1-\tau}$ implies $Q\gg B^{\frac{1+\tau}{2}}$,  Proposition \ref{prop:c=0L} shows that $\bc=\boldsymbol{0}$ contributes \begin{align*}
	\frac{C_Q}{Q^2}\sum_{q=1}^\infty\frac{S_{q}(\boldsymbol{0})I_{q}(w_{B,R};\boldsymbol{0})}{q^4}=B^{2}\left(\CI(w_R)\widetilde{\mathfrak{S}}(\CW)+O_\varepsilon\left(\frac{B^{-\frac{1}{2}(1+\tau)+\varepsilon}}{R^2}\right)\right).
\end{align*} Combining everything finishes the proof. \qed
\begin{remark}
We could have compared $\sum_{q\leqslant X}\frac{S_q(\boldsymbol{0})}{q^4}$ with $\widetilde{\mathfrak{S}}(\CW)$ by means of Perron's formula as in \cite[p. 198]{H-Bdelta}, given that the analytic properties of the zeta function $\zeta(s,\bc)$ (cf. \cite[Lemma 29]{H-Bdelta}) are well-understood. Indeed, $\widetilde{\mathfrak{S}}(\CW)$ equals the residue of the complex function $\varPi(s+4)X^s/s$ at $s=0$. However the error term deduced there is insufficient for optimality.
\end{remark}
\subsection{Proof of Theorem \ref{thm:realzoommain}}
We have by Möbius inversion \begin{align*}
	\CN_{V}(w_{B,R})&=\frac{1}{2}\sum_{d\ll B}\mu(d)\sum_{\substack{\bx\in\BZ^4\setminus\boldsymbol{0}\\ F(\bx)=0,d\mid \bx}}w_{B,R}(\bx)\\ &=\frac{1}{2}\sum_{d\ll B}\mu(d)\left(\CN_{\CW}(w_{B/d,R})+O(1)\right).
\end{align*}
For any $d\ll B$, let $\tau_d\in \BR$ be such that $$R^2=\left(\frac{B}{d}\right)^{1-\tau_d}.$$
Note that $$\left(\frac{B}{d}\right)^{-\tau_d}\ll dB^{-\tau}.$$
For $0<\varepsilon<\tau$, let 
\begin{equation}\label{eq:D0}
	D_0:=B^{\frac{1}{2}(1+\tau-\varepsilon)}.
\end{equation}
 For $d\leqslant D_0$, we have $1>\tau_d\geqslant-1+\frac{2\varepsilon}{1-\tau+\varepsilon}>-1$. The contribution from $d\leqslant D_0$ is, by Theorem \ref{thm:realzoom} and \eqref{eq:CIwbd}
\begin{align*}
	=&\frac{1}{2}B^{2}\CI(w_R)\widetilde{\mathfrak{S}}(\CW)\left(\sum_{d\leqslant D_0}\frac{\mu(d)}{d^2}\right)+O_\varepsilon\left(\sum_{d\leq D_0}\frac{B^{2-\tau_d+\varepsilon}}{d^{2-\tau_d}R^2}\right)+O(D_0)\\ =&\frac{1}{2}B^{2}\CI(w_R)\frac{\widetilde{\mathfrak{S}}(\CW)}{\zeta(2)}+O_\varepsilon\left(\frac{B^2}{R^2}\left(\sum_{d>D_0}\frac{1}{d^2}+B^{-\tau+\varepsilon}\sum_{d\leq D_0}\frac{1}{d}\right)\right)+O(D_0)\\ =&\frac{1}{2}B^{2}\CI(w_R)\frac{\widetilde{\mathfrak{S}}(\CW)}{\zeta(2)}+O_\varepsilon\left(\frac{B^{2-\tau+\varepsilon}}{R^2}\right).
\end{align*}
Finally, for large $d$, we use an alternative bound for $\CN_{\CW}(w_{B/d,R})$ based on lattice point counting. The Witt form \eqref{eq:FWitt}, and our choice of the weight function \eqref{eq:weightBR} force that \begin{equation}\label{eq:bdt0t2t3}
	t_0(\bx)\ll \frac{B}{d},\quad t_2(\bx),t_3(\bx)\ll  1+\frac{B}{dR}.
\end{equation} Note that $t_0\neq 0$.  Then once $t_0,t_2,t_3$ are chosen, $t_1$ is uniquely determined and satisfies
$$t_1(\bx)\ll \left|\frac{F_2(t_2,t_3)}{t_0}\right| \ll \left|t_0F_2\left(\frac{t_2}{t_0},\frac{t_3}{t_0}\right)\right|\ll \frac{B}{dR^2}.$$
Hence the contribution from $d>D_0$ is bounded by
\begin{align*}
&\sum_{d>D_0}\left(\frac{B}{d}\right) \left(1+\frac{B}{dR}\right)^2\left(1+\frac{B}{dR^2}\right)\\
&\ll B\sum_{d>D_0}\frac{1}{d} + \sum_{d>D_0}\frac{B^2}{d^2R^2} + \sum_{d>D_0}\frac{B^3}{d^3R^2}+\sum_{d>D_0}\frac{B^4}{d^4R^4}\\
&\ll B^{1+\varepsilon}+\frac{B^2}{D_0 R^2} + \frac{B^3}{D_0^2R^2} + \frac{B^4}{D_0^3R^4}\\
&\ll B^{1+\varepsilon}.
\end{align*}
To finish the proof it suffices to recall from Proposition \ref{prop:TamagawaLLambda} that
 $$\mathfrak{S}(\CV)=\frac{\widetilde{\mathfrak{S}}(\CW)}{\zeta(2)}.$$
 
 The deduction of the upper bound for $\aess^{\BR}$ is the same as \cite[Theorem A.3]{PART1}. \qed 

\section{Counting points with non-archimedean conditions}\label{se:zoompadic}
We assume from now on that no real condition is imposed, i.e. $R=1$.
\subsection{Counting integral points on the affine cone} 
Our goal in this section is to establish the following result.
\begin{theorem}\label{thm:Llambda}
Let $\wtil_B$ be defined by \eqref{eq:weightpm}. Assume $L^2=O(B^{1-\tau})$ for a certain $-1<\tau<1$. Then we have
	$$\CN_{\CW}(\wtil_B;(L,\bGamma))=B^2\left(\widetilde{\mathfrak{S}}_{L,\bGamma}(\CW)\CI(w_0)+\CK_{L,\bGamma}(\widehat{w})+O_{\varepsilon}\left(B^{-\tau+\varepsilon}L^{-1}  \right)\right),$$ where  
	$\CI(w_0)$ is given by \eqref{eq:weightedsingintpm} (with $R=1$) and we refer to \eqref{eq:CKc} for the definition of the term $\CK_{L,\bGamma}(\widehat{w})$.
\end{theorem}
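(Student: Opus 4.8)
The plan is to feed $\wtil_B$ into the Poisson-summed identity \eqref{eq:poisson}, with $R=1$, $M$ the identity, $Q=B/L$, and $\bd=\tfrac{B}{L^2}\bc$ as in \eqref{eq:d}, and to separate the sum over $\bc\in\BZ^4$ into three pieces: $\bc=\boldsymbol{0}$; $\bc\neq\boldsymbol{0}$ with $F^*(\bc)\neq 0$; and $\bc\neq\boldsymbol{0}$ with $F^*(\bc)=0$. The first piece is immediate from Proposition \ref{prop:c=0L} applied to the weight \eqref{eq:weightpm}: together with $\tfrac{C_Q}{Q^2}=\tfrac{L^2}{B^2}(1+O_N(B^{-N}))$ it produces the main term $B^2\widetilde{\mathfrak{S}}_{L,\bGamma}(\CW)\CI(w_0)$ with an error $O_\varepsilon(B^{1+\varepsilon}L^{-3})$, which is swallowed by $O_\varepsilon(B^{2-\tau+\varepsilon}L^{-1})$ since $L^{-2}\ll B^{1-\tau}$.

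For $\bc\neq\boldsymbol{0}$ with $F^*(\bc)\neq 0$ the aim is to show the contribution lies entirely in the error term, the argument running parallel to the proof of Theorem \ref{thm:realzoom} and to \cite[\S5]{PART1}. First, the simple estimate Lemma \ref{le:easy} disposes of all $\bc$ with $|\bd|\gg QB^\varepsilon$, i.e.\ all $|\bc|\gg LB^\varepsilon$, at the cost of an arbitrarily small power of $B$. For the remaining $\bc$ one splits $q=q_1q_2$ with $q_2\mid(2L\Delta_F)^\infty$ as in \eqref{eq:qdecomp}--\eqref{eq:sqdecomp}, decomposes the $q$-sum dyadically, and estimates each block by partial summation, invoking the ``double Kloosterman refinement'' of Proposition \ref{prop:fstarcneq0} for the partial sums of $S_{q,L,\blambda}(\bc)/q^3$ and the harder estimate Lemma \ref{le:hard} for $\widehat{\CI}_q(w_0;\bd)$ and its $q$-derivative. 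Summing the resulting bounds over the truncated range of $\bc$ and restoring the prefactor $\tfrac{C_Q}{Q^2}(B/L)^4L^{-4}$ places this piece inside $O_\varepsilon(B^{2-\tau+\varepsilon}L^{-1})$; this is exactly the range where the refined exponential sum bound of Proposition \ref{prop:fstarcneq0} is indispensable.

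The genuinely new contribution --- and the heart of the matter --- is the piece $\bc\neq\boldsymbol{0}$, $F^*(\bc)=0$, which in the higher-dimensional companion setting is negligible (cf.\ Lemma \ref{prop:Kloostermann=3}) but here gives rise to $\CK_{L,\bGamma}(\widehat{w})$. For such $\bc$ one combines \eqref{eq:S1R} with \eqref{eq:RqcFstarc=0}: on multiplying by the phase $\e_{qL^2}(\bc\cdot\blambda)$ coming from $I_{q,L,\blambda}(\wtil_B;\bc)$ and splitting this phase by the Chinese remainder theorem, the factor $\e_{q_1}(-\overline{q_2L^2}\,\bc\cdot\blambda)$ in $S^{(1)}_{q,L,\blambda}(\bc)$ is cancelled, leaving the $\bc$-independent factor $\CR(q_1;\bc)=\phi(q_1^3)\psi_F(q_1)$ together with a residual phase $\e_{q_2L^2}(\overline{q_1}\,\bc\cdot\blambda)$ which depends on $q_1$ only modulo $q_2L^2$. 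Since $\psi_F$ is non-principal --- so $\BL(s,\psi_F)$ is entire, in contrast with the pole that drives the $\bc=\boldsymbol{0}$ term --- the sum over $q_1$ coprime to $2L\Delta_F$ converges. One then treats the ramified part through $S^{(2)}_{q,L,\blambda}(\bc)$ and \eqref{eq:S2bd}, and, observing that $\widehat{\CI}_q(w_0;\bd)$ equals $I^*_{q/Q}(\bc/L)$ in Heath-Brown's notation (note $\bd/Q=\bc/L$) and hence stabilises to a limiting, degenerate singular integral on the cone $F^*=0$ as $q/Q\to 0$ while decaying rapidly once $|\bc|\gg LB^\varepsilon$, one lets $B\to\infty$: the surviving $\bc$-sum over the dual cone converges absolutely, and a careful bookkeeping of the limit identifies it with $\CK_{L,\bGamma}(\widehat{w})$ as in \eqref{eq:CKc}, the error again being within $O_\varepsilon(B^{2-\tau+\varepsilon}L^{-1})$. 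Collecting the three pieces finishes the proof.

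I expect this last step to be the principal obstacle. Unlike the $\bc=\boldsymbol{0}$ analysis, the twisted sum does not reduce to anything transparent: one must keep track of the residual dependence on the congruence datum $(L,\blambda)$ carried by $S^{(2)}_{q,L,\blambda}(\bc)$ and by the phase $\e_{q_2L^2}(\overline{q_1}\,\bc\cdot\blambda)$, interchange the orders of summation over $q_1$ and $\bc$ (delicate, since the $q_1$-sum is only conditionally convergent), and evaluate the limiting constant exactly while retaining enough uniformity in $L$ for the stated error. This is precisely the computation left open in \cite{Lindqvist}, and it is where the Brauer--Manin obstruction surfaces: the $\{0,2\}$-valued density function $\Xi_{W_i}$ of Theorem \ref{thm:HLWo} comes out of the local analysis of $\CK_{L,\bGamma}(\widehat{w})$, recording whether this extra term doubles the naive main term or cancels it.
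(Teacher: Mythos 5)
You have the right decomposition of the Poisson sum over $\bc$ into three cases, and your treatment of $\bc=\boldsymbol{0}$ (via Proposition~\ref{prop:c=0L}) and of $\bc\neq\boldsymbol{0}$ with $F^*(\bc)\neq 0$ (dyadic partial summation with Proposition~\ref{prop:fstarcneq0} and Lemma~\ref{le:hard}) matches the paper's Proposition~\ref{prop:cneq0qsum}. A small numerical slip: the $\bc=\boldsymbol{0}$ error, once you multiply Proposition~\ref{prop:c=0L}'s $O_\varepsilon(B^\varepsilon L^{-2}Q^{-1})$ by $\frac{B^4}{L^2}\cdot\frac{C_Q}{Q^2}$ with $Q=B/L$, is $O_\varepsilon(B^{1+\varepsilon}L^{-1})$, not $O_\varepsilon(B^{1+\varepsilon}L^{-3})$; this is still dominated by $O_\varepsilon(B^{2-\tau+\varepsilon}L^{-1})$ since $\tau<1$, so the conclusion survives.

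The genuine gap is in the third piece, $\bc\neq\boldsymbol{0}$ with $F^*(\bc)=0$, which you yourself flag as the principal obstacle but then leave unresolved. You observe that after the CRT/inversion manipulation one is left with $\CR(q_1;\bc)\cdot\e_{q_2L^2}(\bar q_1\bc\cdot\blambda)\cdot S^{(2)}_{q,L,\blambda}(\bc)$, and you note the residual phase depends on $q_1$ only through its class mod $q_2L^2$. That observation is too weak: the modulus $q_2L^2$ still varies with $q_2$, so the $q_1$- and $q_2$-sums cannot be decoupled from it alone, and the $q_1$-sum of $\phi(q_1^3)\psi_F(q_1)$ twisted by this phase is not simply a non-principal character sum. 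The missing idea --- and the key mechanism borrowed and completed from \cite{Lindqvist} --- is the change of variables $\balpha:=L\bsigma_2+\bar q_1\blambda$ in $S^{(2)}$, which shows that the \emph{entire} quantity $\CS_{q_2,L,\blambda}(q_1;\bc)=\e_{q_2L^2}(\bar q_1\bc\cdot\blambda)S^{(2)}_{q,L,\blambda}(\bc)$ depends on $q_1$ only mod $L$. One then expands in Dirichlet characters mod $L$ via \eqref{eq:CAchic}, so that $\CS_{q_2,L,\blambda}(q_1;\bc)=\sum_{\chi\bmod L}\chi(q_1)\CA_{q_2,L,\blambda}(\chi;\bc)$. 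This is what makes the $q_1$-sum genuinely separable: the single character $\chi=\chi_0[L]\widetilde{\psi_F}$ renders $\chi\psi_F$ principal and produces the $\tfrac14 X^4$ growth (Proposition~\ref{prop:qsumS}), all other characters giving a power saving via Lemma~\ref{prop:fstarc=0}. Without this step there is no way to "interchange the orders of summation" or to identify the limiting constant with $\CK_{L,\bGamma}(\widehat w)$; the subsequent partial summation and integration by parts producing $\CJ_L(\widehat w;\bc)$ are routine only once Proposition~\ref{prop:qsumS} is available. Your sketch of "let $B\to\infty$ and the surviving $\bc$-sum converges" does not supply this.
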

\subsubsection{Preliminary manipulations for the $S^{(2)}$-term}

If $L>1,\bc\neq\boldsymbol{0}$, in general $S_{q,L,\blambda}(\bc)$ \eqref{eq:Sqc} and especially $S^{(2)}_{q,L,\blambda}(\bc)$ \eqref{eq:S2} is not multiplicative as a function of $q$. 
Our strategy, inspired by \cite{Lindqvist}, is to make use of orthogonality of characters to ``drop off'' the dependence on $q_1$, upon twisting $S_{q,L,\blambda}(\bc)$ by a family of Dirichlet characters and adding an extra average over such characters. 

To carry out the details, observe that
$$T^{(2)}_{q,L,\blambda}(a_2,\bsigma_2,\bc)=\e_{q_2L^2}\left(a_2F(Lq_1\bsigma_2+\blambda)+\bc\cdot L\bsigma_2\right).$$
Then the change of variables $$a_2':=a_2q_1^2\bmod q_2,\quad \balpha:=L\bsigma_2+\overline{q_1}\blambda\bmod q_2L^2$$ gives 
\begin{equation}\label{eq:S2S}
	\e_{q_2L^2}\left(\overline{q_1}\bc\cdot\blambda\right)S^{(2)}_{q,L,\blambda}(\bc)= \CS_{q_2,L,\blambda}(q_1;\bc)
\end{equation} where, on recalling \eqref{eq:equivdiv} and the summation condition on $\bsigma_2$, we define, for every invertible $x\bmod L$, $$\CS_{q_2,L,\blambda}(x;\bc):=\sum_{\substack{a_2'\bmod q_2\\(a_2',q_2)=1}}\sum_{\substack{\balpha\bmod q_2L^2\\F(\balpha)\equiv 0\bmod L^2\\ \balpha\equiv \overline{x}\blambda\bmod L}}\e_{q_2L^2}\left(a_2' F(\balpha)+\bc\cdot\balpha\right).$$
We note that, with fixed $q_2$ and $\bc$, as a function of $q_1$, $\CS_{q_2,L,\blambda}(q_1;\bc)$ only depends on its class modulo $L$.
For each fixed $q_2$, let us define, for every $\bc\in\BZ^4$ and for every Dirichlet character $\chi\bmod L$, \begin{equation}\label{eq:CAchic}
	\CA_{q_2,L,\blambda}(\chi;\bc):=\frac{1}{\phi(L)}\sum_{x\bmod L}\chi(x)^c\CS_{q_2,L,\blambda}(x;\bc).
\end{equation} 
The advantage of introducing \eqref{eq:CAchic} is that now we have
$$\CS_{q_2,L,\blambda}(q_1;\bc)=\sum_{\chi\bmod L}\chi(q_1)\CA_{q_2,L,\blambda}(\chi;\bc).$$
Then gathering together \eqref{eq:S1R} and \eqref{eq:S2S}, we obtain, for every $q,\bc$,
\begin{equation}\label{eq:S}
	\begin{split}
		&\e_{qL^2}(\bc\cdot\blambda)S_{q,L,\blambda}(\bc)\\ =&\e_{qL^2}(\bc\cdot\blambda)\e_{q_1}\left(-\overline{q_2L^2}\bc\cdot\blambda\right)\CR(q_1;\bc)S^{(2)}_{q,L,\blambda}(\bc)\\ =&\CR(q_1;\bc)\CS_{q_2,L,\blambda}(q_1;\bc) \\ =&\sum_{\chi\bmod L}\CA_{q_2,L,\blambda}(\chi;\bc)\chi(q_1)\CR(q_1;\bc),
	\end{split}
\end{equation}
where we recall \eqref{eq:Rqc}. Here we have used the classical ``inversion formula'' $$\frac{\overline{q_2L^2}}{q_1}+\frac{\overline{q_1}}{q_2L^2}\equiv \frac{1}{q_1q_2L^2} \bmod 1,
$$ 
where we write $\bar{q_2L^2}$ for the inverse of $q_2L^2$ modulo $q_1$ and $\bar{q_1}$ for the inverse of $q_1$ modulo $q_2L^2$. Therefore it becomes possible to average over $q_1$ while fixing $\chi$.

We end this subsection with the following ``flipping'' formula, which is a direct analogue of \cite[Lemma 3.5]{Lindqvist}.
\begin{lemma}\label{le:flip}
	Let $q_2,\bc$ be fixed and let $d\in\BN$ be such that $\gcd(d,L)=1$. Then for every $\chi\bmod L$ we have
	$$	\CA_{q_2,L,d\blambda}(\chi;\bc)=\chi(d)^c \CA_{q_2,L,\blambda}(\chi;\bc).$$
\end{lemma}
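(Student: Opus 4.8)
The plan is to observe that $\blambda$ enters $\CS_{q_2,L,\blambda}(x;\bc)$ only through the single congruence $\balpha\equiv\overline{x}\,\blambda\bmod L$ imposed on the inner summation variable: the remaining data — the phase $\e_{q_2L^2}(a_2'F(\balpha)+\bc\cdot\balpha)$, the range $\balpha\bmod q_2L^2$, and the condition $F(\balpha)\equiv 0\bmod L^2$ — are untouched by rescaling $\blambda$ (in particular $\CS_{q_2,L,d\blambda}$ depends on $d\blambda$ only through its class modulo $L$, so there is no issue with the size of the chosen lift). Since $\gcd(d,L)=1$, the class $d\bmod L$ is invertible, and for invertible $x\bmod L$ one has $\overline{x}(d\blambda)\equiv\overline{\,\overline{d}x\,}\,\blambda\bmod L$, where $\overline{d}$ denotes the inverse of $d$ modulo $L$. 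Comparing the defining conditions therefore yields, for every invertible $x\bmod L$,
$$\CS_{q_2,L,d\blambda}(x;\bc)=\CS_{q_2,L,\blambda}(\overline{d}x;\bc).$$

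Next I would insert this identity into the definition \eqref{eq:CAchic} of $\CA_{q_2,L,d\blambda}(\chi;\bc)$. Since $\chi(x)^c$ vanishes unless $\gcd(x,L)=1$, the sum over $x\bmod L$ is effectively a sum over $(\BZ/L\BZ)^\times$, on which the substitution $y:=\overline{d}x$ (equivalently $x=dy$) is a bijection; this in particular means $\CS$ is only ever evaluated at arguments prime to $L$. Using complete multiplicativity of $\chi^c$ on residues prime to $L$, $\chi(x)^c=\chi(d)^c\,\chi(y)^c$, so the constant $\chi(d)^c$ factors out of the sum and what remains is precisely $\phi(L)\,\CA_{q_2,L,\blambda}(\chi;\bc)$. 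Dividing by $\phi(L)$ gives $\CA_{q_2,L,d\blambda}(\chi;\bc)=\chi(d)^c\,\CA_{q_2,L,\blambda}(\chi;\bc)$, as claimed.

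There is essentially no obstacle here: the statement is a change-of-variables bookkeeping identity, directly analogous to \cite[Lemma 3.5]{Lindqvist}. The only points deserving (minor) care are verifying that the congruence condition on $\balpha$ transforms exactly as written — in particular that it is $\overline{d}x$, and not $dx$, that appears as the new argument of $\CS$ — and keeping track that the apparent sum over all residues $x\bmod L$ in \eqref{eq:CAchic} reduces to a sum over units, which is what legitimises both the inversion $\overline{x}$ and the substitution $y=\overline{d}x$.
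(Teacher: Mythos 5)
Your proof is correct and follows essentially the same route as the paper's: the same observation $\overline{x}(d\blambda)\equiv\overline{x\overline{d}}\,\blambda\bmod L$ yields $\CS_{q_2,L,d\blambda}(x;\bc)=\CS_{q_2,L,\blambda}(x\overline{d};\bc)$, followed by the same unit substitution $x\mapsto x\overline{d}$ and the multiplicativity of $\chi^c$ to pull out $\chi(d)^c$. Your extra remarks — that $\CS$ depends on the lift $\blambda$ only through its class modulo $L$, and that the $x$-sum in \eqref{eq:CAchic} is effectively over $(\BZ/L\BZ)^\times$ — are accurate bookkeeping points that the paper leaves implicit.
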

\begin{proof}
	On observing that for every invertible $x\bmod L$, $\overline{x}d=\overline{x\overline{d}}\bmod L$, we have $\CS_{q_2,L,d\blambda}(x;\bc)=\CS_{q_2,L,\blambda}(x\overline{d};\bc)$ and hence the change of variables $x\mapsto x':=x\overline{d}$ gives
	\begin{align*}
			\CA_{q_2,L,d\blambda}(\chi;\bc)&=\frac{1}{\phi(L)}\sum_{x\bmod L}\chi(x)^c\CS_{q_2,L,\blambda}(x\overline{d};\bc)\\ &=\frac{1}{\phi(L)}\sum_{x'\bmod L}\chi(dx')^c\CS_{q_2,L,\blambda}(x';\bc)\\
            &=\chi(d)^c \CA_{q_2,L,\blambda}(\chi;\bc). \qedhere
	\end{align*}
\end{proof}
\subsubsection{The contribution from $\bc\neq\boldsymbol{0}$}

We recall \eqref{eq:CIq} and $w_0$. Let \begin{equation}\label{eq:CJc}
	\CJ_{L}(\widehat{w};\bc):=\int_{0}^{\infty}\frac{\widehat{\CI}_{t}(\widehat{w};\boldsymbol{d})}{t}\operatorname{d}t=\int_{0}^{\infty}\int_{\BR^4}\widehat{w}(\bt)h\left(r,F(\bt)\right)\e_r\left(-\frac{\bc}{L}\cdot\bt\right)\frac{\operatorname{d}r\operatorname{d}\bt}{r},
\end{equation} on recalling the real vector $\bd=\boldsymbol{d}(B,L;\cc)$ from \eqref{eq:d} and observing that $\frac{\bd}{Q}= \frac{\bc}{L}$.
On using Lemmas \ref{le:easy} and \ref{le:hard}, for $|\bc|>LQ^{\varepsilon}$ we can show (as in \cite[(13.3)]{H-Bdelta})
\begin{equation}\label{eq:CJL}
	\CJ_{L}(\widehat{w};\bc)\ll_N \left(\frac{|\bc|}{L}\right)^{-N}.
\end{equation} 
Moreover, for $|\bc|<LQ^{\varepsilon}$ we still have the upper bound
$$\CJ_{L}(\widehat{w};\bc)\ll \frac{LQ^{\varepsilon}}{|\boldsymbol{c}|}.$$
Let $\varepsilon_{\Delta_F}\mid 8\Delta_F$ be the conductor of the character $\psi_F$.  
We recall \eqref{eq:CAchic} and define  \begin{equation}\label{eq:CB}
	\CB_{L,\blambda}(\bc):=\begin{cases}
		\theta_2(2\Delta_F L)\sum_{\substack{u\mid (2\Delta_F L)^\infty}}\frac{\CA_{u,L,\blambda}(\chi_0[L]\widetilde{\psi_F};\bc)}{u^4} &\text{ if } \varepsilon_{\Delta_F}\mid L;\\ 0 &\text{ otherwise}.
	\end{cases}
\end{equation} 
To see that the $u$-sum is convergent, by \eqref{eq:S2bd}, \eqref{eq:S2S}, and \eqref{eq:CAchic}, we have
\begin{equation}\label{eq:CAbound}
	\left|\CA_{q_2,L,\blambda}(\chi;\bc)\right|\leqslant \max_{x\bmod L}\left|\CS_{q_2,L,\blambda}(x;\bc)\right|\ll (q_2L)^3\gcd(q_2,L)^\frac{1}{2}.
\end{equation}
Hence \begin{equation}\label{eq:CBbd}
	\sum_{u\mid (2\Delta_F L)^\infty}\frac{\left|\CA_{u,L,\blambda}(\chi_0[L]\widetilde{\psi_F};\bc)\right|}{u^4}\ll L^3 \sum_{u\mid (2\Delta_F L)^\infty}\frac{1}{u^\frac{1}{2}}=L^3\prod_{p\mid 2\Delta_F  L}\frac{1}{1-p^{-\frac{1}{2}}}\ll_\varepsilon L^{3+\varepsilon}.
\end{equation}

Consider for $\bc\in\BZ^{4}$ the lattice condition \begin{equation}\label{eq:latticecond}
	\bc\equiv b\nabla F(\blambda)\bmod L \text{ for some } b\in\BZ.
\end{equation} We let \begin{equation}\label{eq:CKc}
	\CK_{L,\bGamma}(\widehat{w}):=\frac{1}{L^6}\sum_{\substack{\bc\neq\boldsymbol{0}:F^*(\bc)=0\\\eqref{eq:latticecond}\text{ holds}}}\CB_{L,\blambda}(\bc)\CJ_{L}(\widehat{w};\bc).
\end{equation} The convergence of $\CK_{L,\bGamma}(\widehat{w})$ follows from \eqref{eq:CJL} and \eqref{eq:CBbd}, and $$\CK_{L,\bGamma}(\widehat{w})\ll_{N_0} L^{N_0}Q^{\varepsilon}$$ for any appropriately large $N_0$. 
\begin{remark}\label{rmk:deltanotdiv}
	We have avoided computing $\CK_{L,\bGamma}(\widehat{w})$ via expanding the expression of $\CB_{L,\blambda}(\bc)$ from \eqref{eq:CB} which would be quite tedious (see also \cite[Remark below Lemma 3.4]{Lindqvist}). Instead, we shall see below that the appearance of $\CK_{L,\bGamma}(\widehat{w})$ is a result of the existence of a Brauer--Manin obstruction to strong approximation on $\CW^o$. See Remark \ref{rmk:noCK} below.)
\end{remark}

The goal of this subsection is to prove:
\begin{proposition}\label{prop:cneq0qsum}
Let $-1<\tau<1$ and $L^2=O(B^{1-\tau})$. We have
	\begin{align*}
		&\underset{\substack{\bc\in\BZ^{4}\setminus\boldsymbol{0},q\ll Q}}{\sum\sum}\frac{S_{q,L,\blambda}(\bc)I_{q,L,\blambda}(\wtil_B;\bc)}{(qL)^4}=\frac{B^4}{L^2}\left(\CK_{L,\bGamma}(\widehat{w})+O_{\varepsilon}\left(B^{-\tau+\varepsilon}L^{-1+\varepsilon}\right)\right),
	\end{align*} 	where $\CK_{L,\bGamma}(\widehat{w})$ is defined by \eqref{eq:CKc}.
\end{proposition}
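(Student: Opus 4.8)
The plan is to substitute the identity $I_{q,L,\blambda}(\wtil_B;\bc)=\e_{qL^2}(\bc\cdot\blambda)\,\widehat{\CI}_{q}(w_0;\boldsymbol d)\,(B/L)^4$ recorded just after \eqref{eq:IJ}, so that the quantity to be estimated equals $\frac{B^4}{L^8}\sum_{\bc\ne\boldsymbol 0}\sum_{q\ll Q}\e_{qL^2}(\bc\cdot\blambda)\,S_{q,L,\blambda}(\bc)\,\widehat{\CI}_{q}(w_0;\boldsymbol d)\,q^{-4}$, where $\boldsymbol d$ is given by \eqref{eq:d} with $R=1$ and $M$ the identity, so that $\boldsymbol d=\tfrac{B}{L^{2}}\bc$ and $\boldsymbol d/Q=\bc/L$. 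First I would truncate the Poisson variable: by Lemma \ref{le:easy}, the uniform bound of Proposition \ref{prop:fstarcneq0}, and a trivial lattice-point count, the terms with $|\bc|\gg LB^{\varepsilon}$ contribute $\ll_{N}B^{-N}$, so it remains to handle $0<|\bc|\ll LB^{\varepsilon}$. I would then split this range according to whether $F^{*}(\bc)=0$ or $F^{*}(\bc)\ne 0$: the first family will produce the term $\CK_{L,\bGamma}(\widehat w)$ and the second will go into the remainder.

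For the family $F^{*}(\bc)=0$ I would apply the decomposition \eqref{eq:S} together with \eqref{eq:RqcFstarc=0}, turning the $q$-sum into $\sum_{\chi\bmod L}\Big(\sum_{q_2\mid(2\Delta_FL)^\infty}\frac{\CA_{q_2,L,\blambda}(\chi;\bc)}{q_2^4}\Big)\Big(\sum_{(q_1,2\Delta_FL)=1}\frac{(\chi\psi_F)(q_1)\,\theta_1(q_1)\,\widehat{\CI}_{q_1q_2}(w_0;\boldsymbol d)}{q_1}\Big)$. When $\chi\psi_F$ is non-principal, the inner $q_1$-sum is negligible by Lemma \ref{prop:fstarc=0} combined with partial summation (using the derivative bound of Lemma \ref{le:hard}), and the outer $q_2$-sum is $\ll L^{3+\varepsilon}$ by \eqref{eq:CAbound}--\eqref{eq:CBbd}; this contribution goes to the error. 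The unique surviving character is $\chi=\chi_0[L]\widetilde{\psi_F}$, which exists exactly when $\varepsilon_{\Delta_F}\mid L$ — precisely the dichotomy built into \eqref{eq:CB}. For it $(\chi\psi_F)(q_1)=1$ on $(q_1,2\Delta_FL)=1$, and using the identity $\theta_1=\sum_{d\mid\cdot}\mu(d)/d$ together with a comparison of $\sum_{q_1}\frac{\theta_1(q_1)\widehat{\CI}_{q_1q_2}}{q_1}$ with the integral $\CJ_{L}(\widehat w;\bc)$ of \eqref{eq:CJc} (the discrepancy controlled via Lemma \ref{le:Ic=0}-type bounds and the absolute convergence behind \eqref{eq:CJL}), the $q_1$-sum becomes $\theta_2(2\Delta_FL)\,\CJ_{L}(\widehat w;\bc)$ up to acceptable error. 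Multiplying by the $q_2$-sum reproduces $\CB_{L,\blambda}(\bc)\,\CJ_{L}(\widehat w;\bc)$; since $\CB_{L,\blambda}(\bc)$ vanishes unless $\bc$ satisfies the lattice condition \eqref{eq:latticecond} (the underlying linear sums in $\CS_{q_2,L,\blambda}$ being detected by that condition, compatibly with Lemma \ref{le:flip}, whence the summation condition in \eqref{eq:CKc}), summing over $\bc\ne\boldsymbol 0$ with $F^{*}(\bc)=0$ yields $\frac{B^4}{L^8}\sum_{\bc}\CB_{L,\blambda}(\bc)\,\CJ_{L}(\widehat w;\bc)=\frac{B^4}{L^2}\CK_{L,\bGamma}(\widehat w)$.

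For the family $F^{*}(\bc)\ne 0$ I would again use \eqref{eq:S}, but now $\CR(q_1;\bc)$ (see \eqref{eq:Rqc}) carries a Ramanujan sum $\sum_{a\bmod q_1,\,(a,q_1)=1}\e_{q_1}(-a F^{*}(\bc))$ as a factor. Here I would carry out the analogue of the ``double Kloosterman refinement'' of Proposition \ref{prop:c=0L}: in the $q_1$-aspect, partial summation using that the coefficients now have absolutely bounded partial sums (with extra Polya--Vinogradov-type cancellation when $\chi\psi_F$ is non-principal) together with the oscillatory-integral bounds of Lemmas \ref{le:easy} and \ref{le:hard}; in the $q_2$-aspect, the absolute convergence \eqref{eq:CBbd}. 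Combining this with the restriction of the non-trivial $\bc$ to the codimension-one family \eqref{eq:latticecond} (which cuts the number of relevant $\bc$ with $|\bc|\ll LB^\varepsilon$ down to $O(LB^{\varepsilon})$), with the improved $X$-aspect bound of Proposition \ref{prop:fstarcneq0} (the gain of $\min(X,L)^{1/2}$ over \cite{PART1}), and with the truncation, one bounds the entire $F^{*}(\bc)\ne 0$ contribution by $\ll\frac{B^4}{L^2}B^{-\tau+\varepsilon}L^{-1+\varepsilon}$. It then remains to collect the errors coming from replacing $\sum_{q\ll Q}$ by the full series (respectively by the integral defining $\CJ_L$) and from the non-principal characters, all of the same order.

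The step I expect to be the genuine obstacle is the $F^{*}(\bc)\ne 0$ estimate. Since $S_{q,L,\blambda}(\bc)$ fails to be multiplicative in $q$ once $L>1$ and $\bc\ne\boldsymbol 0$, the $q$-sum cannot be factored into an Euler product as in \cite{H-Bdelta,PART1}; the Dirichlet-character average packaged into $\CA_{q_2,L,\blambda}(\chi;\bc)$ (following \cite{Lindqvist}) restores just enough structure to average over $q_1$ with $\chi$ fixed, but one must then juggle simultaneously the $q_1$--$q_2$ interaction, the cancellation from the Ramanujan sum and from non-principal $\chi\psi_F$, the decay of $\widehat{\CI}_{q}(w_0;\boldsymbol d)$, and the lattice constraint \eqref{eq:latticecond} — it is only their combined effect that beats the threshold $L^{2}\asymp B^{1-\tau}$, and making the bookkeeping close at the power saving $B^{-\tau+\varepsilon}L^{-1+\varepsilon}$ rather than at something weaker is the delicate point. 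The remaining ingredients — main-term extraction and error collection — are, by contrast, essentially the manipulations already performed in \S\ref{se:realzoom} and in \cite{PART1}.
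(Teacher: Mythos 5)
Your overall plan mirrors the paper's: substitute the identity after \eqref{eq:IJ}, truncate $\bc$, split by whether $F^*(\bc)=0$, use the character decomposition \eqref{eq:S} together with \eqref{eq:RqcFstarc=0}, isolate $\chi_0[L]\widetilde{\psi_F}$ as the only surviving character, and read off the main term $\CB_{L,\blambda}(\bc)\CJ_L(\widehat w;\bc)$ summed over $\bc$ with $F^*(\bc)=0$. However, the crucial $F^*(\bc)=0$ main-term extraction is executed differently from the paper and, as sketched, is not yet a proof. The paper factors out a purely arithmetic statement, Proposition~\ref{prop:qsumS}, giving $\sum_{q\le X}\e_{qL^2}(\bc\cdot\blambda)S_{q,L,\blambda}(\bc)=\tfrac14\CB_{L,\blambda}(\bc)X^4+O_\varepsilon(X^{3+\varepsilon}L^{5+\varepsilon})$, and then passes to $\CG(B,L,T;\bc)$ by Abel summation against $\widehat{\CI}_t(\widehat w;\bd)/t^4$; the integral $\int_0^\infty t^4\tfrac{\partial}{\partial t}(\widehat{\CI}_t/t^4)\,\mathrm{d}t=-4\CJ_L(\widehat w;\bc)$ then appears from an integration by parts, using that $\widehat{\CI}_t$ vanishes for large $t$ and tends to $0$ as $t\to 0$. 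You instead try to keep $\widehat{\CI}_{q_1q_2}$ inside the $q_1$-sum and directly compare $\sum_{q_1}\theta_1(q_1)\widehat{\CI}_{q_1q_2}/q_1$ with $\theta_2(2\Delta_F L)\CJ_L(\widehat w;\bc)$. That comparison is exactly the step you wave at (``the discrepancy controlled via Lemma~\ref{le:Ic=0}-type bounds and the absolute convergence behind~\eqref{eq:CJL}''), and as stated it does not work: Lemma~\ref{le:Ic=0} applies only to $\bd=\boldsymbol 0$, so the derivative bounds you need come from Lemma~\ref{le:hard}; and you must deal both with the coprimality condition $(q_1,2\Delta_F L)=1$ in the sum (which is absent from the integral) and with the fact that the $q_1$-sum starts at $q_1=1$, i.e. $t=q_2$, so your candidate for the integral is $\int_{q_2}^\infty\widehat{\CI}_t/t\,\mathrm{d}t$, not $\int_0^\infty$. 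These discrepancies are controllable, but that work is precisely what Proposition~\ref{prop:qsumS} plus Abel summation packages cleanly, and it needs to be done explicitly.

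Two smaller points. First, you flag the $F^*(\bc)\ne 0$ estimate as the ``genuine obstacle''; in the paper it is the routine half — once one has the sharpened $X$-aspect bound of Proposition~\ref{prop:fstarcneq0}, a single partial summation against the Lemma~\ref{le:hard} bounds for $\widehat{\CI}_q$ and $\partial_q\widehat{\CI}_q$ gives $\CG(B,L,T;\bc)\ll B^\varepsilon L^{3+\varepsilon}T^\varepsilon\min(T,L)^{1/2}/|\bd|$, and summing over $\bc$ in the truncated lattice range closes it. Second, the lattice condition~\eqref{eq:latticecond} is not recovered from properties of $\CB_{L,\blambda}$ as you suggest; in the paper it is invoked up front via \cite[Proposition 5.2]{PART1}, which shows $\CG(B,L,T;\bc)=0$ unless $\bc$ satisfies~\eqref{eq:latticecond}, and this is used both to sieve the $\bc$-sum and to justify the summation range in~\eqref{eq:CKc}. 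So the proposal has the right skeleton, but the step that actually produces $\CJ_L$ — and hence $\CK_{L,\bGamma}(\widehat w)$ — is left at the level of a heuristic; supplying Proposition~\ref{prop:qsumS} (or an equivalent careful sum-to-integral comparison with the coprimality and lower-endpoint issues addressed) is the missing core of the argument.
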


At a crucial point in the proof of Proposition \ref{prop:cneq0qsum}, we rely on the following estimate for certain ``sums of Kloosterman sums'':

\begin{proposition}\label{prop:qsumS}
	Let $\bc\in\BZ^4$ be such that $F^*(\bc)=0$. 
	Then
	$$\sum_{q\leqslant X}\e_{qL^2}(\bc\cdot\blambda)S_{q,L,\blambda}(\bc)=\frac{1}{4}\CB_{L,\blambda}(\bc)X^4+O_{\varepsilon}\left(X^{3+\varepsilon}L^{5 + \varepsilon}\right).$$ 
\end{proposition}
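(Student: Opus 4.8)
The plan is to start from the factorisation \eqref{eq:S}, which reads
$\e_{qL^2}(\bc\cdot\blambda)S_{q,L,\blambda}(\bc)=\sum_{\chi\bmod L}\CA_{q_2,L,\blambda}(\chi;\bc)\chi(q_1)\CR(q_1;\bc)$,
where $q=q_1q_2$ with $\gcd(q_1,2\Delta_F L)=1$ and $q_2\mid(2\Delta_F L)^\infty$. Since $F^*(\bc)=0$ we may substitute \eqref{eq:RqcFstarc=0}, namely $\CR(q_1;\bc)=q_1^3\theta_1(q_1)\psi_F(q_1)$. Thus
$$\sum_{q\leqslant X}\e_{qL^2}(\bc\cdot\blambda)S_{q,L,\blambda}(\bc)
=\sum_{\chi\bmod L}\ \sum_{\substack{q_2\leqslant X\\ q_2\mid(2\Delta_F L)^\infty}}\CA_{q_2,L,\blambda}(\chi;\bc)\ \sum_{\substack{q_1\leqslant X/q_2\\ \gcd(q_1,2\Delta_F L)=1}}(\chi\psi_F)(q_1)\,q_1^3\theta_1(q_1).$$
The inner $q_1$-sum is the engine of the proof. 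First I would split off the main term: when $\chi\psi_F$ restricted to residues coprime to $2\Delta_F L$ is \emph{principal} — equivalently (given $\varepsilon_{\Delta_F}\mid L$ is needed for $\chi$ to exist, cf.\ the definition of $\CB$) when $\chi=\chi_0[L]\widetilde{\psi_F}$ — the character $(\chi\psi_F)(q_1)=\chi_0[2\Delta_F L](q_1)$ is trivial on the range, so $\sum_{q_1\leqslant Y}\chi_0[2\Delta_F L](q_1)q_1^3\theta_1(q_1)$ is a smooth arithmetic sum of size $\tfrac14 c\,Y^4+O(Y^{3+\varepsilon})$, where the leading coefficient $c$ is the Euler product $\prod_{p\nmid 2\Delta_F L}(1-p^{-1})\cdot$(density factor)$=\theta_2(2\Delta_F L)$ after collecting the $\theta_1$ into a Dirichlet convolution and summing. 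For all other characters $\chi$, the twisted character $\chi\psi_F$ is non-principal, and Lemma \ref{prop:fstarc=0} applied with partial summation (to handle the weight $q_1^3$) gives $\sum_{q_1\leqslant Y}(\chi\psi_F)(q_1)q_1^3\theta_1(q_1)\ll_\varepsilon Y^{3+\varepsilon}L^{1/2+\varepsilon}$, which has no main term.

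Next I would assemble the $q_2$-sum. For the main-term character, the $q_1$-main-term $\tfrac14\theta_2(2\Delta_F L)(X/q_2)^4$ gets multiplied by $\CA_{q_2,L,\blambda}(\chi_0[L]\widetilde{\psi_F};\bc)/\,$(implicit)$\,$and summed over $q_2\mid(2\Delta_F L)^\infty$, $q_2\leqslant X$; using \eqref{eq:CAbound} the tail $q_2>$ anything converges geometrically, so extending to the full sum $\sum_{q_2\mid(2\Delta_F L)^\infty}\CA_{q_2,L,\blambda}(\cdots)/q_2^4$ produces exactly $\tfrac14\CB_{L,\blambda}(\bc)X^4$ by the definition \eqref{eq:CB}, at the cost of an error controlled by $X^4\cdot\sum_{q_2>X}|\CA_{q_2}|/q_2^4\ll X^{3+\varepsilon}L^{3+1/2+\varepsilon}$ together with the $q_1$-error $\sum_{q_2}|\CA_{q_2}|(X/q_2)^{3+\varepsilon}\ll X^{3+\varepsilon}L^{3+\varepsilon}$. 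For the non-principal characters, the bound $\ll Y^{3+\varepsilon}L^{1/2+\varepsilon}$ on the $q_1$-sum combines with \eqref{eq:CAbound} and the fact that there are at most $\phi(L)\leqslant L$ characters to give a total contribution $\ll L\cdot L^{3+\varepsilon}\cdot X^{3+\varepsilon}L^{1/2+\varepsilon}$; here one must be slightly careful and use the sharper $\gcd(q_2,L)^{1/2}$ in \eqref{eq:CAbound} plus $\sum_{q_2\mid(2\Delta_F L)^\infty}q_2^{-\varepsilon}\gcd(q_2,L)^{1/2}\ll L^{1/2+\varepsilon}$ to land on the claimed $O_\varepsilon(X^{3+\varepsilon}L^{5+\varepsilon})$. (The exponent $5$ is comfortably above what the naive count $L^{1+3+1/2+1/2}=L^5$ gives, so there is no loss.)

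The main obstacle is the bookkeeping in the non-principal case: one needs the Polya--Vinogradov input of Lemma \ref{prop:fstarc=0} to apply \emph{uniformly} in the modulus, the weight $q_1^3$ must be removed by partial summation without degrading the $Y^{3+\varepsilon}$ saving, and the $\phi(L)$-fold sum over characters together with the $\CA_{q_2}$-bound must be organised so that the gcd-saving $\gcd(q_2,L)^{1/2}$ is retained — dropping it would cost a full power of $L$ in the $q_2\mid(2\Delta_F L)^\infty$ sum and overshoot the target exponent. A secondary technical point is verifying that the leading Euler constant in the principal-character $q_1$-sum is precisely $\theta_2(2\Delta_F L)$ as defined in \eqref{eq:theta}; this is a direct computation writing $\theta_1=\mathbf{1}*\tfrac{\mu}{\mathrm{id}}$ and summing the geometric-type series over prime powers, matching the local factors $(1-p^{-1})$ for $p\nmid 2\Delta_F L$ and $(1-p^{-1})$ (from $\theta_1$) for $p\mid 2\Delta_F L$ against the definition of $\theta_2(2\Delta_F L)=\theta_1(2\Delta_F L)\prod_{p\nmid 2\Delta_F L}(1-p^{-2})$ — one checks the $\psi_F$-twisted geometric series collapses correctly because $\psi_F(p)=1$ is forced on the relevant residues only in the main term. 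Once these two points are settled, the rest is routine partial summation and geometric-tail estimates.
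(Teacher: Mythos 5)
Your proposal follows the same route as the paper's proof: decompose via \eqref{eq:S} and \eqref{eq:RqcFstarc=0}, isolate the character $\chi=\chi_0[L]\widetilde{\psi_F}$ (which exists precisely when $\varepsilon_{\Delta_F}\mid L$), extract the main term $\tfrac14\theta_2(2\Delta_F L)(X/q_2)^4$ from the corresponding $q_1$-sum, extend the $q_2$-sum to infinity to produce $\tfrac14\CB_{L,\blambda}(\bc)X^4$, and treat the remaining characters by partial summation plus Lemma \ref{prop:fstarc=0} together with the bound \eqref{eq:CAbound}. Apart from slightly different bookkeeping of the sub-dominant error exponents in the principal-character case (which do not affect the stated $O_\varepsilon(X^{3+\varepsilon}L^{5+\varepsilon})$, since the non-principal characters dominate), this is the paper's argument.
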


\begin{proof} According to \eqref{eq:S} and \eqref{eq:RqcFstarc=0}, we have
	\begin{align*}
		\sum_{q\leqslant X}\e_{qL^2}(\bc\cdot\blambda)S_{q,L,\blambda}(\bc)&=\sum_{\substack{q=q_1q_2\leqslant X\\ \gcd(q_1,2L\Delta_F)=1\\ q_2 \mid (2L\Delta_F)^\infty,q_2\leqslant X}}\sum_{\chi\bmod L}\CA_{q_2,L,\blambda}(\chi;\bc)\chi(q_1)\CR(q_1;\bc)\\ &=\sum_{\substack{q_2\leqslant X\\q_2 \mid (2L\Delta_F)^\infty}}\sum_{\chi\bmod L}\CA_{q_2,L,\blambda}(\chi;\bc)\sum_{\substack{q_1\leqslant X/q_2\\\gcd(q_1,2L\Delta_F)=1}}\chi(q_1)\psi_F(q_1)\phi(q_1^3).
	\end{align*}

 If  $\varepsilon_{\Delta_F}\mid L$, then for any fixed $q_2$ in the sum, the $q_1$-sum with character $\chi = \chi_0[L]\widetilde{\psi_F}$ contributes
	\begin{align*}
	&\sum_{\substack{n\leqslant X/q_2}}\chi_0[2\Delta_F L](n)\phi(n^3)\\&=\sum_{\substack{e_1\leqslant X/q_2\\\gcd(e_1,2\Delta_F L)=1}}\frac{\mu(e_1)}{e_1}e_1^3\sum_{\substack{e_2\leqslant \frac{X}{q_2e_1}\\\gcd(e_2,2\Delta_F L)=1}}e_2^3\\ &=\sum_{\substack{e_1\leqslant X/q_2\\\gcd(e_1,2\Delta_F L)=1}}\mu(e_1)e_1^2\left(\frac{1}{4}\theta_1(2\Delta_F L)\left(\frac{X}{q_2e_1}\right)^4+O\left(L^{1+\varepsilon}\left(\frac{X}{q_2e_1}\right)^3\right)\right)\\ &=\frac{X^4}{4q_2^4}\theta_1(2\Delta_F L)\left(\sum_{\substack{e_1\leqslant X/q_2\\\gcd(e_1,2\Delta_F L)=1}}\frac{\mu(e_1)}{e_1^2}\right)+O_\varepsilon\left(\left(\frac{X}{q_2}\right)^{3+\varepsilon}X^{\varepsilon}L^{1+\varepsilon}\right)\\ &=\frac{X^4}{4q_2^4}\theta_2(2\Delta_F L)+O_\varepsilon\left(\left(\frac{X}{q_2}\right)^{3+\varepsilon}X^{\varepsilon}L^{1+\varepsilon}\right).
\end{align*}
Extending the sum over all such $q_2$ to infinity gives the main term, upon introducing an error term, that we can bound using \eqref{eq:CAbound} by
\begin{align*}
	X^4\sum_{\substack{u>X\\u\mid (2\Delta_F L)^\infty}}\frac{\left|\CA_{u,L,\blambda}(\chi_0[L]\widetilde{\psi_F};\bc)\right|}{u^4}&\ll X^4L^3\sum_{\substack{u> X\\u\mid (2\Delta_F L)^\infty}}\frac{\gcd(u,L)^\frac{1}{2}}{u} \\ &\ll_{\varepsilon} X^{3+\varepsilon} L^{\frac{7}{2}+\varepsilon}.
\end{align*}Similarly the error term above contributes
$$\ll_\varepsilon X^{3+\varepsilon}L^\varepsilon\sum_{\substack{q_2\leqslant X\\q_2 \mid (2L\Delta_F)^\infty}}\frac{(q_2L)^3\gcd(q_2,L)^\frac{1}{2}L}{q_2^3}\ll_{\varepsilon} X^{3+\varepsilon}L^{\frac{9}{2}+\varepsilon}.$$
By partial summation, the remaining characters contribute, by Lemma \ref{prop:fstarc=0}, 
\begin{align*}
	&\ll_\varepsilon \sum_{\substack{q_2\leqslant X\\q_2 \mid (2L\Delta_F)^\infty}}\sum_{\substack{\chi\bmod L\\ \chi\neq \chi_0[L]\widetilde{\psi_F}}}\left|\CA_{q_2,L,\blambda}(\chi;\bc)\right| \left(X/q_2\right)^{3+\varepsilon}L^{\frac{1}{2}+\varepsilon}\\ &\ll_\varepsilon X^{3+\varepsilon}L^{\frac{9}{2}+\varepsilon}\sum_{\substack{q_2\leqslant X\\q_2 \mid (2L\Delta_F)^\infty}}\gcd(q_2,L)^\frac{1}{2}\\ &\ll_{\varepsilon} X^{3+\varepsilon}L^{5+\varepsilon}.
\end{align*}
Gathering the bounds obtained so far gives the result.
\end{proof}

\begin{proof}[Proof of Proposition \ref{prop:cneq0qsum}]
	By \eqref{eq:IJ} and \eqref{eq:d}, the summand equals (recall that $M$ is the identity matrix so $\det M=1$)
	$$\left(\frac{B}{L}\right)^4\frac{\e_{qL^2}(\bc\cdot\blambda)S_{q,L,\blambda}(\bc)\widehat{\CI}_{q}(\widehat{w};\boldsymbol{d})}{(qL)^4}.$$ For $T\ll Q$ a dyadic parameter, we define
\begin{equation}\label{eq:CW}
		\CG(B,L,T;\bc):=\sum_{q\sim T}\frac{\e_{qL^2}(\bc\cdot\blambda)S_{q,L,\blambda}(\bc)\widehat{\CI}_{q}(\widehat{w};\boldsymbol{d})}{q^4}.
\end{equation}
	
	By \cite[Proposition 5.2]{PART1}, $\CG(B,L,T;\bc)\neq 0$ only if $\bc$ satisfies the lattice condition \eqref{eq:latticecond}.
	Arguing as in \cite[Step I of the proof of Theorem 5.1]{PART1}, we can reduce the range of $\bc$ to 
	\begin{equation}\label{eq:c00}
		0\neq|\bc|\leqslant B^{\frac{1}{2}(1-\tau)+\varepsilon},\quad |\widetilde{c_j}|\leqslant L B^\varepsilon \quad \text{for }j=0,2,3.
	\end{equation} 

	Assume first $F^*(\bc)\neq 0$. To deal with $\CG(B,L,T;\bc)$, a straightforward generalisation of the proof of Theorem \ref{thm:realzoom} yields, on combining with Proposition \ref{prop:fstarcneq0}, 
	\begin{align*}
		&\CG(B,L,T;\bc)\\ \ll &T\left(\sup_{T<t\leq 2T}\sum_{T< q\leq t}\frac{\left|S_{q,L,\blambda}(\bc)\right|}{q^3}\right)\left(\sup_{T< q\leq 2T}\left|\frac{\widehat{\CI}_{q}(\widehat{w};\boldsymbol{d})}{q^2}\right|+\sup_{T <  q\leq 2T}\left|\frac{\partial\widehat{\CI}_{q}(\widehat{w};\boldsymbol{d})}{q\partial q}\right|\right)\\ \ll &\frac{B^\varepsilon L^{3+\varepsilon}T^\varepsilon }{|\bd|}\min (T,L)^\frac{1}{2}.
	\end{align*} 
Therefore, using the observation $$	|\bd|\gg B^{\tau}|\bc|,$$ the $q$-sum for every such fixed $\bc$ contributes
$$\ll_{\varepsilon} \frac{B^\varepsilon L^{3+\varepsilon}}{|\bd|}\min(Q,L)^\frac{1}{2}\ll B^{-\tau+\varepsilon}L^{3+\varepsilon}\min(Q,L)^\frac{1}{2}|\bc|^{-1}.$$
By \cite[Lemma 5.3]{PART1}, $$\sum_{\bc:\eqref{eq:latticecond},\eqref{eq:c00}\text{ hold}}|\bc|^{-1}\ll_{\varepsilon} B^\varepsilon.$$ So we conclude
\begin{align*}
	&\underset{\substack{\bc\in\BZ^{4}\setminus\boldsymbol{0},F^*(\bc)\neq 0\\ q\ll Q}}{\sum\sum}\frac{S_{q,L,\blambda}(\bc)I_{q,L,\blambda}(\wtil_B;\bc)}{(qL)^4}  \ll_\varepsilon \left(\frac{B}{L^2}\right)^4 B^{-\tau+\varepsilon}L^{3+\varepsilon}\min(Q,L)^\frac{1}{2}.
\end{align*}

Assume from now on that $F^*(\bc)=0$. We write $$A_t:=\sum_{q\leqslant t}\e_{qL^2}(\bc\cdot\blambda)S_{q,L,\blambda}(\bc).$$ Then using partial summation, we get
\begin{equation}\label{eq:CBLTc}
    	\CG(B,L,T;\bc)=-\int_{T}^{2T}\left(A_t \frac{\partial}{\partial t}\left(\frac{\widehat{\CI}_{t}(\widehat{w};\boldsymbol{d})}{t^4}\right) \right)\operatorname{d}t +\left[A_t \frac{\widehat{\CI}_{t}(\widehat{w};\boldsymbol{d})}{t^4}\right]_{T}^{2T}
\end{equation}
By Proposition \ref{prop:qsumS} and again by Lemma \ref{le:hard} to control $\left|\frac{\partial}{\partial t}\left(\frac{\widehat{\CI}_{t}(\widehat{w};\boldsymbol{d})}{t^4}\right)\right|$, the integral in \eqref{eq:CBLTc} equals
$$-\frac{1}{4}\CB_{L,\blambda}(\bc)\int_{T}^{2T}t^4\frac{\partial}{\partial t}\left(\frac{\widehat{\CI}_{t}(\widehat{w};\boldsymbol{d})}{t^4}\right)\operatorname{d}t+O_{\varepsilon}\left(\frac{T^{\varepsilon} L^{5+\varepsilon}}{|\bd|^{1-\varepsilon}}\right).$$ 
Therefore, on summing over the dyadic parameters $T\ll Q$ and all the $\bc$ with $F^*(\bc)=0$, the big $O$-terms above contribute 
\begin{align*}
	&\ll_{\varepsilon} B^{-\tau+\varepsilon}L^{5+\varepsilon}\sum_{\bc:\eqref{eq:latticecond},\eqref{eq:c00}\text{ hold}}|\bc|^{-1}\ll_{\varepsilon} B^{-\tau+\varepsilon}L^{5+\varepsilon}.
\end{align*}
 On the other hand, since $\widehat{\CI}_{t}(\widehat{w};\boldsymbol{d})$ vanishes for $t>\max (Q,2Q\sup_{\bt\in\operatorname{Supp}(\widehat{w})}|F(\bt)|)$ \cite[Lemma 4]{H-Bdelta}, and by Lemma \ref{le:hard}, $\lim_{t\to0}\widehat{\CI}_{t}(\widehat{w};\boldsymbol{d})=0$, we have
$$\int_{0}^{\infty}\frac{\partial}{\partial t}\widehat{\CI}_{t}(\widehat{w};\boldsymbol{d})\operatorname{d}t=0.$$
So we can further compute:
\begin{align*}
	\int_{0}^{\infty}t^4\frac{\partial}{\partial t}\left(\frac{\widehat{\CI}_{t}(\widehat{w};\boldsymbol{d})}{t^4}\right)\operatorname{d}t&=-4\int_{0}^{\infty}\frac{\widehat{\CI}_{t}(\widehat{w};\boldsymbol{d})}{t}\operatorname{d}t=-4\CJ_{L}(\widehat{w};\bc).
\end{align*} 
Finally for the variation term in \eqref{eq:CBLTc}, summing over $T$ gives $$\left[A_t\frac{\widehat{\CI}_{t}(\widehat{w};\boldsymbol{d})}{t^4}\right]_{1}^{\infty}\ll_\varepsilon B^{-\tau+\varepsilon}L^{5+\varepsilon},$$ by \eqref{eq:CBbd}, Proposition \ref{prop:qsumS} and Lemma \ref{le:hard}.

Hence we conclude
\begin{multline*}
	\underset{\substack{\bc\in\BZ^{4}\setminus\boldsymbol{0},F^*(\bc)=0\\q\ll Q}}{\sum\sum}\frac{S_{q,L,\blambda}(\bc)I_{q,L,\blambda}(\wtil_B;\bc)}{(qL)^4}\\ = \left(\frac{B}{L^2}\right)^4\left(\sum_{\substack{\bc: F^*(\bc)=0 \\\eqref{eq:latticecond},\eqref{eq:c00}\text{ hold}}}\CB_{L,\blambda}(\bc)\CJ_{L}(\widehat{w};\bc) +O_{\varepsilon}\left(B^{-\tau+\varepsilon}L^{5+\varepsilon}\right)\right).
\end{multline*} 
The error term above dominates the one arising in the contribution from $F^*(\bc)\neq 0$. On making use of Lemma \ref{le:easy} again, we can extend the range of the $\bc$-sum by removing the condition \eqref{eq:c00} upon adding an acceptable power-saving error term, and hence we extract the term $\CK_{L,\bGamma}(\widehat{w})$.
\end{proof}

\subsubsection{Proof of Theorem \ref{thm:Llambda}}
Going back to \eqref{eq:poisson}, by Proposition \ref{prop:c=0L} and \ref{prop:cneq0qsum} (the error term from the analysis of the case $\bc = \boldsymbol{0}$ in Proposition \ref{prop:c=0L} is dominated by the one in  Proposition \ref{prop:cneq0qsum}), we finish the proof of Theorem \ref{thm:Llambda}.
\qed

\subsection{From the affine cone to the projective quadric}\label{se:Vcount}

We begin with the following ``flipping lemma'' for $\CK_{L,d\bGamma}(\widehat{w})$ \eqref{eq:CKc}. 
\begin{lemma}\label{le:CKflip}
	For $(d,L)=1$, we have
	$$\CK_{L,d\bGamma}(\widehat{w})=\psi_F(d)\CK_{L,\bGamma}(\widehat{w}).$$
\end{lemma}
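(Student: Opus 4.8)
The plan is to reduce the identity to the behaviour of the arithmetic factor $\CB_{L,\blambda}(\bc)$ and then to invoke the flipping formula of Lemma~\ref{le:flip}. In the definition \eqref{eq:CKc} of $\CK_{L,\bGamma}(\widehat{w})$ only two ingredients depend on the choice of representative of $\bGamma$: the index set cut out by the lattice condition \eqref{eq:latticecond}, and the factor $\CB_{L,\blambda}(\bc)$ of \eqref{eq:CB}; the integral $\CJ_{L}(\widehat{w};\bc)$ of \eqref{eq:CJc} depends only on $\bc$ and $L$. Moreover $\CS_{q_2,L,\blambda}(x;\bc)$, and hence $\CA_{q_2,L,\blambda}(\chi;\bc)$ and $\CB_{L,\blambda}(\bc)$, depend on $\blambda$ only through its class modulo $L$; since $\gcd(d,L)=1$ and $\gcd(L,\lambda_0,\dots,\lambda_3)=1$, the vector $d\blambda$ reduces modulo $L$ to an admissible lift of $d\bGamma$, so we may compute $\CK_{L,d\bGamma}(\widehat{w})$ using $d\blambda$ as representative. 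It thus suffices to show (i) the index set of the $\bc$-sum is unchanged upon replacing $\blambda$ by $d\blambda$, and (ii) $\CB_{L,d\blambda}(\bc)=\psi_F(d)\,\CB_{L,\blambda}(\bc)$ for each such $\bc$.

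Step (i) is immediate: $\nabla F$ is linear, so $\nabla F(d\blambda)=d\,\nabla F(\blambda)$, and as $\gcd(d,L)=1$ the sets $\{\,bd\,\nabla F(\blambda)\bmod L:b\in\BZ\,\}$ and $\{\,b\,\nabla F(\blambda)\bmod L:b\in\BZ\,\}$ coincide, while the remaining conditions $\bc\neq\boldsymbol{0}$ and $F^*(\bc)=0$ are $\blambda$-free. For step (ii): the case distinction in \eqref{eq:CB} depends on whether $\varepsilon_{\Delta_F}\mid L$, a condition on $L$ alone; if it fails, both sides of (ii) vanish. If $\varepsilon_{\Delta_F}\mid L$, then by the absolute convergence of the $u$-sum established in \eqref{eq:CBbd} we may apply Lemma~\ref{le:flip} term by term with $\chi=\chi_0[L]\widetilde{\psi_F}$, the convergence factor $\theta_2(2\Delta_F L)$ being untouched; this gives $\CB_{L,d\blambda}(\bc)=\chi(d)^c\,\CB_{L,\blambda}(\bc)$. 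Since $\chi=\chi_0[L]\widetilde{\psi_F}$ is real and $\gcd(d,L)=1$, one has $\chi(d)^c=\widetilde{\psi_F}(d)$, and under $\varepsilon_{\Delta_F}\mid L$ together with $\gcd(d,L)=1$ this equals $\psi_F(d)$ by the relation between $\psi_F=\left(\tfrac{4\Delta_F}{\cdot}\right)$ and its primitive inducing character $\widetilde{\psi_F}$. Assembling (i) and (ii) proves the lemma.

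The whole thing is short given Lemma~\ref{le:flip}, and is essentially bookkeeping. The two points that need care are: verifying that $\CB_{L,\blambda}(\bc)$ and the lattice condition \eqref{eq:latticecond} depend on $\blambda$ only modulo $L$, so that $d\blambda$ legitimately stands in for the prescribed lift of $d\bGamma$ even though it may violate $|\blambda|\leq L$; and justifying the termwise use of Lemma~\ref{le:flip} inside the infinite sum over $u\mid(2\Delta_F L)^\infty$, which \eqref{eq:CBbd} licenses. The only genuinely substantive point is the last equality in (ii) — identifying $\widetilde{\psi_F}(d)$ with $\psi_F(d)$ — where one must watch the conventions distinguishing the Kronecker symbol $\psi_F$ from its primitive counterpart; I expect this, rather than any estimate, to be the (mild) crux.
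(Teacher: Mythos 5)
Your proof is correct and takes essentially the same route as the paper: reduce to the case $\varepsilon_{\Delta_F}\mid L$ (else both sides vanish by \eqref{eq:CB}) and then invoke the flipping formula of Lemma~\ref{le:flip} for $\CA_{u,L,\blambda}(\chi_0[L]\widetilde{\psi_F};\bc)$. Your step (i) about invariance of the index set cut out by \eqref{eq:latticecond} under $\blambda\mapsto d\blambda$ is a genuine piece of bookkeeping that the paper's one-line proof leaves implicit, and your caution about the $\psi_F$ versus $\widetilde{\psi_F}$ distinction is warranted — both you and the paper write $\psi_F(d)$ where, strictly under the stated hypotheses $\varepsilon_{\Delta_F}\mid L$ and $\gcd(d,L)=1$ alone, the flipping lemma only yields $\widetilde{\psi_F}(d)$; the two agree once one also has $\gcd(d,2\Delta_F)=1$, which is guaranteed whenever $2\Delta_F\mid L$ as in the paper's later applications.
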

\begin{proof}
	Note that we may assume  $\varepsilon_{\Delta_F}\mid L$. Otherwise $\CB_{L,\blambda}(\bc)=0$ (recall \eqref{eq:CB}).
	Therefore, it follows directly from Lemma \ref{le:flip} that for every $\bc$, $$\CB_{L,d\blambda}(\bc)=\psi_F(d) \CB_{L,\blambda}(\bc).\qedhere$$ 
\end{proof}

Let us define the counting function for $\CW^o$:
\begin{equation}\label{eq:countWo}
	\CN_{\CW^o}(\wtil_B;(L,\bGamma)):=\sum_{\substack{\bx\in\BZ^4:F(\bx)=0\\\gcd(\bx)=1,\bx\equiv\bGamma\bmod L}}\wtil_B(\bx).
\end{equation}
\begin{lemma}\label{le:Wocount}
Assume that $L^2\asymp B^{1-\tau}$ for a certain $0<\tau<1$.
Then
	\begin{align*} \CN_{\CW^o}(\wtil_B;(L,\bGamma))=B^2\left(\CI(\widehat{w})\frac{\widetilde{\mathfrak{S}}_{L,\bGamma}(\CW)}{\BL(2,\chi_0[L])}+\CL(D_0)\CK_{L,\bGamma}(\widehat{w})\right)+O_\varepsilon\left(\frac{B^{2-\tau+\varepsilon}}{L} \right),
	\end{align*} where $D_0$ is defined by \eqref{eq:D0} and $$\CL(X):=\sum_{d\leqslant X,(d,L)=1}\frac{\mu(d)\psi_F(d)}{d^2}.$$
\end{lemma}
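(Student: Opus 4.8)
The plan is to follow the pattern by which Theorem \ref{thm:realzoommain} was deduced from Theorem \ref{thm:realzoom}: strip off the primitivity condition by Möbius inversion on $\gcd(\bx)$, and then feed the resulting family of affine counts into Theorem \ref{thm:Llambda}. The starting observation is that if $\bx\equiv\bGamma\bmod L$ with $\bGamma\in\CW^o(\BZ/L\BZ)$, then $\gcd(\bx)$ is automatically coprime to $L$, since a prime $p\mid L$ dividing every coordinate of $\bx$ would divide every coordinate of the primitive lift $\blambda$ of $\bGamma$. Hence, inserting $\sum_{d\mid\gcd(\bx)}\mu(d)$, interchanging summation, and substituting $\bx=d\by$ (which preserves $F(\bx)=0$ and satisfies $\wtil_B(d\by)=\wtil_{B/d}(\by)$), one obtains
$$\CN_{\CW^o}(\wtil_B;(L,\bGamma))=\sum_{\substack{d\ge 1\\(d,L)=1}}\mu(d)\,\CN_{\CW}\bigl(\wtil_{B/d};(L,\bGamma_d)\bigr),$$
where $\bGamma_d\in\CW^o(\BZ/L\BZ)$ denotes the class $\overline{d}\,\bGamma$ (a valid point since $\overline{d}$ is a unit mod $L$ and $F(\overline{d}\,\blambda)=\overline{d}^{\,2}F(\blambda)$), and the $d$-sum is finite because $\wtil_{B/d}\equiv0$ once $d\gg_{w_0}B$. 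One then needs that the two arithmetic quantities of Theorem \ref{thm:Llambda} transform simply under $\bGamma\mapsto\bGamma_d$: the map $\bv\mapsto\overline{d}\,\bv$ is a bijection of $(\BZ/p^k\BZ)^4$ preserving the zero locus of $F$ and sending the class of $\bGamma$ to that of $\bGamma_d$, so $\widetilde{\mathfrak{S}}_{L,\bGamma_d}(\CW)=\widetilde{\mathfrak{S}}_{L,\bGamma}(\CW)$, whereas Lemma \ref{le:CKflip} together with $\psi_F(\overline{d})=\psi_F(d)$ gives $\CK_{L,\bGamma_d}(\widehat{w})=\psi_F(d)\CK_{L,\bGamma}(\widehat{w})$.

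Next I would split the $d$-sum at $D_0=B^{\frac12(1+\tau-\varepsilon)}$ from \eqref{eq:D0}. For $d\le D_0$ one has $d<B/L$, so writing $L^2=(B/d)^{1-\tau_d}$ we get $-1<\tau_d<1$ with $\tau_d$ bounded away from $-1$, and Theorem \ref{thm:Llambda} applied with $B$ replaced by $B/d$ yields
$$\CN_{\CW}\bigl(\wtil_{B/d};(L,\bGamma_d)\bigr)=\left(\frac{B}{d}\right)^{2}\Bigl(\widetilde{\mathfrak{S}}_{L,\bGamma}(\CW)\,\CI(\widehat{w})+\psi_F(d)\CK_{L,\bGamma}(\widehat{w})\Bigr)+O_\varepsilon\left(\left(\frac{B}{d}\right)^{2-\tau_d+\varepsilon}\frac{1}{L}\right).$$
Multiplying by $\mu(d)$ and summing over $d\le D_0$ coprime to $L$, the first bracket contributes $B^2\widetilde{\mathfrak{S}}_{L,\bGamma}(\CW)\CI(\widehat{w})\sum_{d\le D_0,(d,L)=1}\mu(d)d^{-2}=B^2\widetilde{\mathfrak{S}}_{L,\bGamma}(\CW)\CI(\widehat{w})\bigl(\BL(2,\chi_0[L])^{-1}+O(D_0^{-1})\bigr)$, and the $\CK$-term contributes exactly $B^2\CL(D_0)\CK_{L,\bGamma}(\widehat{w})$. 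Using the identity $(B/d)^{2-\tau_d}=(B/d)L^2$ the error terms sum to $O_\varepsilon(LB^{1+\varepsilon})=O_\varepsilon(B^{2-\tau+\varepsilon}/L)$, and the truncation error $B^2D_0^{-1}\widetilde{\mathfrak{S}}_{L,\bGamma}(\CW)\CI(\widehat{w})$ has the same size since $B^2D_0^{-1}\ll B^{2-\tau+\varepsilon}/L$ and $\widetilde{\mathfrak{S}}_{L,\bGamma}(\CW)=O(1)$ by Proposition \ref{prop:TamagawaLLambda}.

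For the tail $d>D_0$, which is the only part not reducing to quoting earlier results, I would use a crude lattice-point bound. Since $\operatorname{supp}(w_0)$ is compact and avoids the cone point, $\wtil_{B/d}(\by)\ne0$ forces $|\by|\ll_{w_0}B/d$ with $\by$ in a fixed residue class mod $L$, so
$$\CN_{\CW}\bigl(\wtil_{B/d};(L,\bGamma_d)\bigr)\ \le\ \#\bigl\{\by\in\BZ^4:|\by|\ll_{w_0}B/d,\ \by\equiv\blambda'\bmod L\bigr\}\ \ll\ \left(1+\frac{B}{dL}\right)^{4},$$
$\blambda'$ being a primitive lift of $\bGamma_d$; moreover this count vanishes once $d\gg_{w_0}B$. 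Summing over $d>D_0$, the range $D_0<d\le B/L$ contributes $\ll B^4L^{-4}\sum_{d>D_0}d^{-4}\ll B^4L^{-4}D_0^{-3}=B^{\frac{1+\tau}{2}+\varepsilon}$, while the range $d>B/L$ contributes $O(1)$ per term over $O(B)$ values of $d$; both are $\ll B^{2-\tau+\varepsilon}/L$ for $\tau<1$ and $\varepsilon$ small enough in terms of $1-\tau$. Adding the three contributions gives the asserted formula.

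The main obstacle is the interplay between the choice of $D_0$ and the competing error sources. On the one hand $D_0$ must be taken strictly below $B/L$ so that Theorem \ref{thm:Llambda} applies throughout $d\le D_0$; on the other hand the tail of the crude lattice count is admissible only because the congruence modulo $L$ supplies the saving $L^{-4}$ — the analogous step for $L=1$ in the proof of Theorem \ref{thm:realzoommain} has no such gain and instead exploits the real zoom via the Witt form. Finally one must resist completing $\CL(D_0)$ to $\sum_{(d,L)=1}\mu(d)\psi_F(d)d^{-2}$: the tail of that series, multiplied by $B^2\CK_{L,\bGamma}(\widehat{w})$, is uncontrolled because at this stage $\CK_{L,\bGamma}(\widehat{w})$ is only bounded by $L^{N_0}Q^{\varepsilon}$, so $\CL(D_0)$ must be left truncated, and its completion is deferred to the later evaluation of $\CK$.
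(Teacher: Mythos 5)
Your proposal is correct and follows essentially the same route as the paper: M\"obius inversion over $d=\gcd(\bx)$ (which is automatically coprime to $L$ because $\bGamma$ is a primitive class), application of Theorem \ref{thm:Llambda} for $d\le D_0$ together with the flipping Lemma \ref{le:CKflip} for the $\CK$-term and invariance of $\widetilde{\mathfrak{S}}$ under $\bGamma\mapsto\overline{d}\bGamma$, and a crude lattice-point count exploiting the mod-$L$ congruence for the tail $d>D_0$. Your closing remark about why $\CL(D_0)$ must be left truncated rather than completed, because $\CK_{L,\bGamma}(\widehat{w})$ is at this stage only bounded by $L^{N_0}Q^\varepsilon$, is a correct reading of the paper's strategy and explains why its evaluation is deferred to Corollary \ref{co:CK}.
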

\begin{proof}
	A usual Möbius inversion yields $$\CN_{\CW^o}(\wtil_B;(L,\bGamma))=\sum_{d\ll B,(d,L)=1} \mu(d)\CN_{\CW}(\wtil_{B/d};(L,\overline{d}\bGamma)).$$
	Let $\tau_d'\in\BR$ be such that $$L^2=\left(\frac{B}{d}\right)^{1-\tau_d'}.$$
    For $d\leq D_0$ we then have that $-1+\varepsilon'<\tau_{D_0}'\leq \tau$ (for some $\varepsilon '>0$). An application of Theorem \ref{thm:Llambda} leads to
	\begin{align*}
	&B^2\left(\sum_{\substack{(d,L)=1\\ d\leqslant D_0}}\frac{\mu(d)}{d^2}\left(\CI(\widehat{w})\widetilde{\mathfrak{S}}_{L,\overline{d}\bGamma}(\CW)+\CK_{L,\overline{d}\bGamma}(\widehat{w})\right)\right)+	O_{\varepsilon}\left(\sum_{d\leq D_0}\frac{B^{2-\tau_d'+\varepsilon}}{d^{2-\tau_d'}L}\right).
	\end{align*}
For every $(d,L)=1$, we have $\sigma_p(\CW;L,\bGamma)=\sigma_p(\CW;L,d\bGamma)$  and hence $$\widetilde{\mathfrak{S}}_{L,\overline{d}\bGamma}(\CW)=\widetilde{\mathfrak{S}}_{L,\bGamma}(\CW).$$ 
Moreover, by Lemma \ref{le:CKflip}, 	$$\CK_{L,\overline{d}\bGamma}(\widehat{w})=\psi_F(d)\CK_{L,\bGamma}(\widehat{w}).$$
We observe that for $d\leq D_0$ we have 
$$\left(\frac{B}{d}\right)^{-\tau_d'}\ll d B^{-\tau}.$$
Noticing that 
$$\sum_{(d,L)=1}\frac{\mu(d)}{d^2}=\BL(2,\chi_0[L])^{-1},$$ as in the proof of Theorem \ref{thm:realzoommain}, the contribution of such $d\leqslant D_0$ is
$$B^2\left(\CI(\widehat{w})\frac{\widetilde{\mathfrak{S}}_{L,\bGamma}(\CW)}{\BL(2,\chi_0[L])}+\CL(D_0)\CK_{L,\bGamma}(\widehat{w})\right)+O_\varepsilon\left( \frac{B^{2-\tau+\varepsilon}}{L}\right).$$
Now for $D_0\leq d\ll B$, we use lattice point counting instead to show that
$$\CN_{\CW}(\wtil_{B/d};(L,\overline{d}\bGamma))\ll \#\{\bx\in\BZ^4:|\bx|\ll B/d,L\mid \bx\}\ll 1+\left(\frac{B}{dL}\right)^4,$$ whence the sum over $d>D_0$ is
$$\ll \sum_{d\ll B}1+\frac{B^4}{L^4}\sum_{d>D_0}\frac{1}{d^4}\ll_\varepsilon B+
\frac{B^{2-\tau+\varepsilon}}{L}. \qedhere$$ 
\end{proof}

We now make the additional assumption that the weight function $\wtil_B$ is symmetric, i.e. $\wtil_B (\bx)=\wtil_B (-\bx)$ for all $\bx\in \mathbb{R}^4$. Recall the definition of $\CN_{V}(\wtil_B;(L,\bLambda))$ in \eqref{eq:countingfunVpadic}.

\begin{lemma}\label{prop:Vcount}
Assume that $\wtil_B (\bx)=\wtil_B (-\bx)$ for all $\bx\in \mathbb{R}^4$ and that $L^2\asymp B^{1-\tau}$ for a certain $0<\tau<1$. We have
$$ \CN_{V}(\wtil_B;(L,\bLambda))=\frac{1}{2}B^2\CI(\widehat{w})\mathfrak{S}_{L,\bLambda}(\CV)+O_\varepsilon\left(B^{2-\tau+\varepsilon} \right).$$
\end{lemma}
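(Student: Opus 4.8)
The plan is to pass from the projective count $\CN_{V}(\wtil_B;(L,\bLambda))$ to an orbit sum of the affine counts $\CN_{\CW^o}(\wtil_B;(L,\bGamma))$ and then invoke Lemma \ref{le:Wocount}, the point being that the anomalous term $\CK_{L,\bGamma}(\widehat w)$ cancels out in this sum. First I would fix a primitive vector $\blambda\in\BZ^4$ with $F(\blambda)=0$ whose reduction mod $p^{\operatorname{ord}_p(L)}$ is a primitive lift of $\bLambda_p$ for every $p\mid L$, and set $\bGamma_0:=\blambda\bmod L\in\CW^o(\BZ/L\BZ)$. Since a $\BZ_p$-point of $\CV$ is a primitive vector modulo $\BZ_p^\times$-scaling, a primitive $\bx\in\BZ^4$ with $F(\bx)=0$ represents a rational point $P_\bx\in V(\BQ)\cap\CD(L,\bLambda)$ if and only if $\bx\equiv u\blambda\bmod L$ for some $u\in(\BZ/L\BZ)^\times$, i.e. if and only if $\bx\bmod L$ lies in the orbit $\CO:=(\BZ/L\BZ)^\times\cdot\bGamma_0\subseteq\CW^o(\BZ/L\BZ)$. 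As $\gcd(L,\lambda_0,\dots,\lambda_3)=1$, for each $p\mid L$ some $\lambda_i$ is a unit mod $p^{\operatorname{ord}_p(L)}$, so the stabiliser of $\bGamma_0$ is trivial and $\#\CO=\phi(L)$. Because $\wtil_B$ is symmetric, every $P\in V(\BQ)$ contributes $\wtil_B(\pm\bx)=\wtil_B(P)$ twice on the affine side, so that
$$2\,\CN_{V}(\wtil_B;(L,\bLambda))=\sum_{\bGamma\in\CO}\CN_{\CW^o}(\wtil_B;(L,\bGamma)).$$
(If the support of $\wtil_B$ meets both connected components of $W^o(\BR)$, one first splits $\wtil_B$ into a sum of two weights of the shape \eqref{eq:weightpm} — possible since the components are separated — and argues by linearity; note $\CI$, $\CK_{L,\bGamma}$ and the error term in Lemma \ref{le:Wocount} are all linear in the weight.)

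\textbf{Summing the main terms.} Next I would apply Lemma \ref{le:Wocount} to each of the $\phi(L)$ summands. Scaling by a unit mod $L$ leaves the $p$-adic densities unchanged, so $\widetilde{\mathfrak{S}}_{L,\bGamma}(\CW)=\widetilde{\mathfrak{S}}_{L,\bGamma_0}(\CW)$ for all $\bGamma\in\CO$ (as already noted in the proof of Lemma \ref{le:Wocount}), and the main terms add up to $\phi(L)B^2\CI(\widehat w)\,\widetilde{\mathfrak{S}}_{L,\bGamma_0}(\CW)/\BL(2,\chi_0[L])$. The $\phi(L)$ error terms $O_\varepsilon(B^{2-\tau+\varepsilon}/L)$ combine into $O_\varepsilon(B^{2-\tau+\varepsilon})$ since $\phi(L)\leqslant L$. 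It remains to treat $\CL(D_0)\sum_{\bGamma\in\CO}\CK_{L,\bGamma}(\widehat w)$, the factor $\CL(D_0)$ being independent of $\bGamma$.

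\textbf{Cancellation of the $\CK$-terms (the crux).} This is where the Brauer–Manin contribution disappears on passing to the projective quadric, and I expect it to be the main obstacle of the proof. Writing each element of $\CO$ as $d\bGamma_0$ with $d$ a positive integer coprime to $L$, Lemma \ref{le:CKflip} gives $\CK_{L,d\bGamma_0}(\widehat w)=\psi_F(d)\CK_{L,\bGamma_0}(\widehat w)$, so that
$$\sum_{\bGamma\in\CO}\CK_{L,\bGamma}(\widehat w)=\CK_{L,\bGamma_0}(\widehat w)\sum_{\substack{d\bmod L\\(d,L)=1}}\psi_F(d).$$
If $\varepsilon_{\Delta_F}\nmid L$ then $\CB_{L,\blambda}(\bc)=0$ for all $\bc$, whence $\CK_{L,\bGamma_0}(\widehat w)=0$ by \eqref{eq:CB}–\eqref{eq:CKc}; if $\varepsilon_{\Delta_F}\mid L$, then on $(\BZ/L\BZ)^\times$ the function $\psi_F$ coincides with the character induced by the \emph{primitive non-principal} character $\widetilde{\psi_F}$ of conductor $\varepsilon_{\Delta_F}\mid L$ (here we use $\Delta_F\neq\square$), hence $\psi_F$ is non-principal on $(\BZ/L\BZ)^\times$ and $\sum_{d\bmod L,(d,L)=1}\psi_F(d)=0$. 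In either case the full $\CK$-contribution vanishes.

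\textbf{Conclusion.} Combining the three steps,
$$2\,\CN_{V}(\wtil_B;(L,\bLambda))=B^2\CI(\widehat w)\,\frac{\phi(L)\,\widetilde{\mathfrak{S}}_{L,\bGamma_0}(\CW)}{\BL(2,\chi_0[L])}+O_\varepsilon\!\left(B^{2-\tau+\varepsilon}\right),$$
and Proposition \ref{prop:TamagawaLLambda} identifies $\phi(L)\widetilde{\mathfrak{S}}_{L,\bGamma_0}(\CW)/\BL(2,\chi_0[L])$ with $\mathfrak{S}_{L,\bLambda}(\CV)$; dividing by $2$ yields the stated formula. Apart from the cancellation above, the only delicate points are the bookkeeping for $\CO$: verifying $\#\CO=\phi(L)$ and that the adelic condition $P\in\CD(L,\bLambda)$ is \emph{exactly} $\bx\bmod L\in\CO$.
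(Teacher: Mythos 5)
Your proposal is correct and follows essentially the same route as the paper: Möbius/orbit decomposition $\CN_V = \tfrac12\sum_{\gamma\in(\BZ/L\BZ)^\times}\CN_{\CW^o}(\cdot;(L,\gamma\bGamma))$, termwise application of Lemma \ref{le:Wocount}, cancellation of the $\CK$-contribution via Lemma \ref{le:CKflip} and orthogonality of $\psi_F$ on $(\BZ/L\BZ)^\times$, and Proposition \ref{prop:TamagawaLLambda} to match Peyre's constant. You are somewhat more careful than the paper on two minor points — justifying that the $(\BZ/L\BZ)^\times$-action on $\bGamma_0$ is free (via primitivity of $\blambda$), and splitting $\wtil_B$ into single-component pieces so that Lemma \ref{le:Wocount} applies by linearity when $\Delta_F<0$ — but these are refinements of, not deviations from, the paper's argument.
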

\begin{proof}
	The function $\CN_{V}(\wtil_B;(L,\bLambda))$ is related to \eqref{eq:countWo} via
	\begin{align*}
		\CN_{V}(\wtil_B;(L,\bLambda))&=\frac{1}{2}\sum_{\gamma\in (\BZ/L\BZ)^\times}\CN_{\CW^o}(\wtil_B;(L,\gamma\bGamma)).
	\end{align*}
	Now for any $\gamma\in (\BZ/L\BZ)^\times$, by Lemma \ref{le:CKflip},  $$\widetilde{\mathfrak{S}}_{L,\gamma\bGamma}(\CW)=\widetilde{\mathfrak{S}}_{L,\bGamma}(\CW),\quad \CK_{L,\gamma\bGamma}(\widehat{w})=\psi_F(\gamma)\CK_{L,\bGamma}(\widehat{w}).$$
 Hence by Lemma \ref{le:Wocount}, $\CN_{V}(\wtil_B;(L,\bLambda))$ equals
\begin{multline*}
	\frac{1}{2} B^2\left(\CI(\widehat{w})\frac{\#(\BZ/L\BZ)^\times}{\BL(2,\chi_0[L])} \widetilde{\mathfrak{S}}_{L,\bGamma}(\CW)+\CL(D_0)\left(\sum_{\gamma\in (\BZ/L\BZ)^\times}\psi_F(\gamma)\right)\CK_{L,\bGamma}(\widehat{w})\right)\\+O_\varepsilon\left(B^{2-\tau+\varepsilon}\right).
\end{multline*}
Notice that $\sum_{\gamma\in (\BZ/L\BZ)^\times}\psi_F(\gamma)=0$ if  $\varepsilon_{\Delta_F}\mid L$. Otherwise $\CK_{L,\bGamma}(\widehat{w})=0$. So we conclude the desired formula on applying Proposition \ref{prop:TamagawaLLambda}.
\end{proof}

\begin{proof}[Proof of Theorem \ref{thm:padiczoommain}]	
	From Lemma \ref{prop:Vcount} we deduce the main term of Theorem \ref{thm:padiczoommain}. 
 
 It follows from Proposition \ref{prop:TamagawaLLambda} that $\mathfrak{S}_{L,\bLambda}(\CV)\gg_\varepsilon L^{-2-\varepsilon}$. Therefore, for the error term from Lemma \ref{prop:Vcount} to be negligible, we require that 
 $$ B^{-\tau + \varepsilon} \ll L^{-2 - \varepsilon}.$$
Recalling that $L\ll B^{\frac{1-\tau}{2}}$, this is satisfied if $1-\tau < \tau$, or equivalently, 
$$ \tau > \frac{1}{2}.\qedhere$$
\end{proof}

\section{Counting integral points on the punctured affine cone}\label{se:BMCK}
Let $W^o=W\setminus\boldsymbol{0}\subseteq \BA_{\BQ}^4\setminus\boldsymbol{0}$ be the ($3$-dimensional) punctured affine cone of the projective quadratic surface $V$. 
The Brauer--Manin obstruction to the Hasse principle and strong approximation of rational points has been studied by Colliot-Thélène and Xu \cite{CT-XuCompositio,CT-Xu}.
We refer to \cite{CT-XuCompositio}, \cite[\S2]{CT-Xu} and \cite[\S8.2]{Poonen} for more details about basic terminology. 

Let $W^o(\mathbf{A}_\BQ)$ be the adelic space, that is, the restricted product of $W^o(\BQ_\nu)$ with respect to $\CW^o(\BZ_\nu)$ for all places $\nu$ of $\BQ$. Our assumption that $V$ is everywhere locally soluble is equivalent to $W^o(\mathbf{A}_\BQ)\neq\varnothing$, which by the Hasse--Minkowski theorem, is also equivalent to $W^o(\BQ)\neq\varnothing$. Let $W^o(\mathbf{A}_\BQ)^{\operatorname{Br}}\subseteq W^o(\mathbf{A}_\BQ)$ denote the \emph{Brauer--Manin set}, consisting of elements that are orthogonal to the Brauer group.  We define $W^o(\mathbf{A}_\BQ^\infty)^{\operatorname{Br}}$, the \emph{Brauer--Manin set away from $\infty$}, to be the image of $W^o(\mathbf{A}_\BQ)^{\operatorname{Br}}$ under the projection $\operatorname{pr}_\infty:W^o(\mathbf{A}_\BQ)\to W^o(\mathbf{A}_\BQ^\infty)$, the adelic space without the $\BR$-component. The diagonal map $W^o(\BQ)\to W^o(\mathbf{A}_\BQ)$ then followed by $\operatorname{pr}_\infty$ maps $W^o(\BQ)$ into $W^o(\mathbf{A}_\BQ^\infty)^{\operatorname{Br}}$.
\begin{theorem}[Colliot-Thélène--Xu, \cite{CT-Xu} Theorem 6.5 (iii)]\label{thm:BrW}
	Assume that $\Delta_F\notin \BQ^2$ and that $W^o(\mathbf{A}_\BQ)\neq \varnothing$. Then the Brauer--Manin obstruction to strong approximation away from $\infty$ is the only one for the rational points of $W^o$, that is, $W^o(\BQ)$ is dense in $W^o(\mathbf{A}_\BQ^\infty)^{\operatorname{Br}}$. 
\end{theorem}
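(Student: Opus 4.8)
The plan is to recognise $W^o$ as a homogeneous space of a semisimple simply connected group and then to run the Colliot-Th\'el\`ene--Xu descent method for strong approximation on such spaces. First I would use everywhere local solubility together with the Hasse--Minkowski theorem to see that $F$ is isotropic over $\BQ$, fix a primitive $\bx_0\in W^o(\BQ)$, and note that by Witt's extension theorem the group $G:=\mathrm{Spin}(F)$, acting through $G\twoheadrightarrow\mathrm{SO}(F)$, is transitive on nonzero isotropic vectors over $\overline\BQ$; hence $W^o\cong G/H$ with $H:=\mathrm{Stab}_G(\bx_0)$. Writing $F=\mathbb H\perp F_2$ with $\bx_0$ a hyperbolic basis vector and $F_2$ binary, the hypothesis $\Delta_F\notin\BQ^{\times2}$ forces $F_2$ to be anisotropic (an isotropic $F_2$ would make $\Delta_F$ a square), so $\mathrm{SO}(F_2)\cong R^1_{K/\BQ}\BG_m$ with $K=\BQ(\sqrt{\Delta_F})$ a quadratic field; computing the stabiliser of $\bx_0$ in $\mathrm{SO}(F)$ (reductive quotient $\mathrm{SO}(F_2)$, unipotent radical $\BG_a^2$) and pulling back along $G\to\mathrm{SO}(F)$ shows that $H$ is connected and solvable, an extension of a one-dimensional anisotropic torus $\widetilde T$ by $\BG_a^2$. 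For the same reason the even Clifford algebra of $F$ is a quaternion algebra over $K$, split because $F$ is isotropic, so $G\cong R_{K/\BQ}\mathrm{SL}_2$; in particular $G$ is $\BQ$-almost-simple and $G(\BR)=\mathrm{SL}_2(K\otimes_\BQ\BR)$ is noncompact whether $K$ is real or imaginary quadratic.

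Next I would exploit the $H$-torsor $G\to W^o$. Since $G$ is semisimple simply connected with $G(\BR)$ noncompact, Kneser--Platonov gives strong approximation for $G$ away from $\infty$, with no Brauer obstruction. With $H$ connected this is exactly the situation treated by Colliot-Th\'el\`ene--Xu: given $(P_v)\in W^o(\mathbf A_\BQ)^{\operatorname{Br}}$, one considers the twists ${}^\tau G$ of the torsor indexed by $\tau\in H^1(\BQ,H)$; the orthogonality of $(P_v)$ to $\mathrm{Br}(W^o)$ translates, through a Sansuc/Tate--Poitou-type duality between $\mathrm{Br}_a(W^o)$ and the finite obstruction group controlling the image of $H^1(\BQ,H)\to\prod_v H^1(\BQ_v,H)$, into adelic solubility of ${}^\tau G$ for a suitable $\tau$; and ${}^\tau G$ is again a torsor under a connected group with noncompact real points, hence satisfies strong approximation off $\infty$ once it has an adelic point. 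Pushing its rational points forward to $W^o$ yields rational points approximating $(P_v)$ at every finite place, i.e. the density of $W^o(\BQ)$ in $W^o(\mathbf A_\BQ^\infty)^{\operatorname{Br}}$.

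To be sure the Brauer--Manin set is the right one I would compute $\mathrm{Br}(W^o)/\mathrm{Br}(\BQ)$: since $\overline{W^o}$ is a homogeneous space of the semisimple simply connected $\overline G$ one has $\mathrm{Br}(\overline{W^o})=0$, so the group is algebraic and is read off from the $\mathrm{Gal}(K/\BQ)$-module of characters of $\overline H$ via the torsor sequence; it comes out cyclic, generated by a single quaternion class of the shape $(\Delta_F,g)$ for a suitable linear form $g$. This is the class whose local invariants the functions $\Xi_{W_i}$ of Theorem \ref{thm:HLWo} record, and it is nontrivial precisely because $\widetilde T$ is anisotropic, i.e. $\Delta_F\notin\BQ^{\times2}$ (when $\Delta_F$ is a square, $G\cong\mathrm{SL}_2\times\mathrm{SL}_2$ and this obstruction disappears).

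The hard part is the descent in the second step. Strong approximation for $G$ alone does not suffice, because a given adelic point $(P_v)$ of $W^o$ need not lift to an adelic point of $G$: it defines a class in the restricted product $\prod_v H^1(\BQ_v,H)\cong\prod_v H^1(\BQ_v,\widetilde T)$ which is nontrivial at every place where $K/\BQ$ is nonsplit, and only a twist ${}^\tau G$ of the torsor can absorb these local twists. One must therefore prove that the Brauer--Manin condition is exactly what guarantees the existence of a single global $\tau\in H^1(\BQ,H)$ accounting for all of them, by pinning down the Sansuc-type exact sequence that identifies $\mathrm{Br}_a(W^o)$ with the obstruction to lifting an adelic class to $H^1(\BQ,H)$, and by using the noncompactness of $G(\BR)$ to license removing exactly the real place; this cohomological bookkeeping is the technical core. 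An alternative, more hands-on route would replace the general descent by Eichler's theorem that the spinor genus equals the class for indefinite quadratic lattices, together with a computation of the local spinor-norm groups $\theta(\mathrm{SO}(F)(\BQ_v))$; the resulting ``spinor exceptional'' failure of strong approximation is governed by $K=\BQ(\sqrt{\Delta_F})$ and coincides with the quaternion obstruction above.
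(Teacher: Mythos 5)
The paper does not prove this theorem; it imports it wholesale from Colliot-Th\'el\`ene--Xu \cite{CT-Xu}, and the remark immediately after it states that their argument proceeds via the quadric fibration $q(x,y,z)=p(t)\to\BA^1_t$ of Proposition 4.5 there, feeding the fibers into the strong-approximation machinery of \cite[\S5]{CT-XuCompositio}, together with an explicit warning that the naive $\BG_m$-projection $W^o\to V$ does \emph{not} work. Your route is genuinely different: you skip the fibration and exhibit $W^o$ directly as a homogeneous space $G/H$ with $G=\operatorname{Spin}(F)\cong R_{K/\BQ}\mathrm{SL}_2$ and $H$ an extension of a one-dimensional anisotropic torus by $\BG_a^2$, then run the descent through twists of the torsor $G\to W^o$. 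This is the structure the paper itself records in the Appendix (``Via homogeneous spaces''), but only for interpreting the constant $\beta(W^o)=2$, not for (re)proving strong approximation. What your approach buys is directness (no fibration and no base-change bookkeeping along the conic at $t=0$), at the cost of working only for $p(t)=ct^2$ rather than the general $p$ of \cite{CT-Xu}.

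The gap is exactly where you say it is, and it is not small. The statement ``given an adelic Brauer--Manin point, some twist ${}^\tau G$ has an adelic point, and then strong approximation for ${}^\tau G$ finishes'' is not a consequence of Kneser--Platonov plus formal descent: one needs the precise Sansuc/Tate--Poitou comparison identifying $\operatorname{Br}_a(W^o)$ (dual to $\Sha^1_\omega$ of $\widehat{H}$ or of the character module of $H^{\mathrm{tor}}$) with the obstruction to lifting the local classes in $\prod_v H^1(\BQ_v,H)$ to a global class, and one needs to justify removing precisely the archimedean place. This is the content of \cite[\S5]{CT-XuCompositio} (or Borovoi--Demarche), and as written you are citing it rather than proving it, so your sketch is in the end not independent of the same machinery the authors invoke. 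Two further points deserve explicit verification rather than assertion: (i) that the preimage $H\subset\operatorname{Spin}(F)$ of $\operatorname{Stab}_{\mathrm{SO}(F)}(\bx_0)$ is connected, i.e. that the $\mu_2$-extension of $\BG_a^2\rtimes R^1_{K/\BQ}\BG_m$ does not split off a $\BZ/2$-component (the appendix asserts $H$ is a form of $\BG_a^2\rtimes\BG_m$, which presupposes this); and (ii) that $\Sha^1(\BQ,H)=\Sha^1(\BQ,\widetilde T)=0$, which you need to know there is no extra failure of the descent beyond the Brauer--Manin condition --- for the norm-one torus of a quadratic field this is the Hasse norm theorem, but you should say so, since it is precisely what makes the single class $(\Delta_F,g)$ the whole story. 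The Eichler/spinor-genus alternative you mention would indeed give a more self-contained proof for this specific cone, but you would then need to translate spinor-exceptionality back into the Brauer--Manin language to match the statement being proved; as written it is an aside rather than a proof.
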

\begin{remark}
 Since $W^o(\BR)\neq\varnothing$ by assumption,  the form $F$ has signature either $(3,1)$ or $(2,2)$ over $\BR$. So upon diagonalising $F$ over $\BQ$, we may assume that $W^o$ is defined by the equation \begin{equation}\label{eq:quadricfibration}
		q(x,y,z)=p(t),
	\end{equation} where $q$ is an indefinite non-degenerate ternary quadratic form and $p(t)=ct^2,c\in\BQ^\times$. The assumption $\Delta_F\notin\BQ^2$ is therefore equivalent to $-c\det(q)\notin \BQ^2$ and hence \cite[Theorem 6.5 (iii)]{CT-Xu} applies. In fact \cite[\S6]{CT-Xu} deals with a more general family of quasi-affine varieties where $p(t)$ can be any non-zero polynomial in $t$, with the removal of the points $(0,0,0,t_0)$ where $t_0$ ranges over all multiple roots of $p(t)$. The special case \eqref{eq:quadricfibration} has been recently revisited again by work of Bright--Kok \cite{Bright-Kok} and Tronto \cite{Tronto}. 
	The proof of Theorem \ref{thm:BrW} proceeds by considering the quadratic fibration \eqref{eq:quadricfibration} into the variable $t$ (see \cite[Proposition 4.5]{CT-Xu}) and applies the machinery developed in  \cite[\S5]{CT-XuCompositio}. As pointed out in \cite[comment below Theorem 1.1]{Bright-Kok}, one cannot use simply the canonical projection $W^o\to V$ which is a $\BG_{\operatorname{m}}$-fibration.
\end{remark}

\begin{proposition}[\cite{CT-Xu} Proposition 5.7, \cite{Bright-Kok} Lemma 3.1]\label{prop:BrWo}
	Assume that $\Delta_F\notin \BQ^2$ and that $W^o(\mathbf{A}_\BQ)\neq \varnothing$.  Then we have $\operatorname{Br}(W^o)/\operatorname{Br}(\BQ)=\BZ/2\BZ$. Let  $g=g(X_1,\cdots,X_4)$ be a linear form that defines the tangent plane of a certain $\BQ$-point of $W^o$. Then the quaternion algebra $(\Delta_F,g)$ generates the group above.
\end{proposition}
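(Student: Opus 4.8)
The plan is to compute $\operatorname{Br}(W^o)/\operatorname{Br}(\BQ)$ through the geometry of the projection $\pi\colon W^o\to V$, which realises $W^o$ as the complement of the zero section $Z\cong V$ in the total space $L=\operatorname{Tot}(\CO_V(-1))$ of the tautological line bundle; equivalently $W^o$ is the punctured affine cone, so that $\overline{\BQ}[W^o]=\overline{\BQ}[x_0,\dots,x_3]/(F)$ by algebraic Hartogs. Recall that $V_{\overline{\BQ}}\cong\BP^1\times\BP^1$ with $\Pic(V_{\overline{\BQ}})=\BZ^2$ on which $\operatorname{Gal}_{\BQ}$ acts through $\operatorname{Gal}(\BQ(\sqrt{\Delta_F})/\BQ)$ by swapping the two rulings---this is where the hypothesis $\Delta_F\neq\square$ enters---and $\CO_V(1)$ corresponds to $(1,1)$.

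First I would dispose of the geometric Brauer group. Since $L\to V$ is a line bundle, pullback gives $\operatorname{Br}(L_{\overline{\BQ}})=\operatorname{Br}(V_{\overline{\BQ}})=0$ and $\Pic(L_{\overline{\BQ}})=\Pic(V_{\overline{\BQ}})$; Grothendieck purity along the smooth, simply connected divisor $Z_{\overline{\BQ}}\cong V_{\overline{\BQ}}$ then gives $\operatorname{Br}(W^o_{\overline{\BQ}})=0$, while the localisation sequence gives $\Pic(W^o_{\overline{\BQ}})=\Pic(V_{\overline{\BQ}})/\BZ\cdot[\CO_V(1)]=\BZ^2/\BZ(1,1)\cong\BZ$, on which $\operatorname{Gal}_{\BQ}$ acts by the quadratic character $\chi_{\Delta_F}$ (the swap of $\BZ^2$ descends to $-1$ on the quotient). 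One also notes $\overline{\BQ}[W^o]^{\times}=\overline{\BQ}^{\times}$ from the graded description, and $W^o(\BQ)\neq\varnothing$, since $V(\BQ)\neq\varnothing$ by Hasse--Minkowski and one lifts to the cone.

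With these inputs the Hochschild--Serre spectral sequence collapses to $\operatorname{Br}(W^o)/\operatorname{Br}(\BQ)\cong H^1(\BQ,\Pic(W^o_{\overline{\BQ}}))$: the geometric Brauer group vanishes, the geometric units are trivial, $W^o$ has a rational point, and the obstruction term lands in $H^3(\BQ,\overline{\BQ}^{\times})=0$. It then remains to evaluate $H^1(\BQ,\BZ(\chi_{\Delta_F}))$; inflation--restriction along $\operatorname{Gal}_{\BQ(\sqrt{\Delta_F})}$, which acts trivially and satisfies $\operatorname{Hom}(\operatorname{Gal}_{\BQ(\sqrt{\Delta_F})},\BZ)=0$, identifies this with $H^1(\BZ/2\BZ,\BZ^{-})=\BZ/2\BZ$, where $\BZ^{-}$ denotes $\BZ$ with the order-two action by $-1$. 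Hence $\operatorname{Br}(W^o)/\operatorname{Br}(\BQ)\cong\BZ/2\BZ$. (Alternatively one can argue directly on $W^o$ via purity for $Z\subset L$ and the Gysin sequence $\operatorname{Br}(L)\to\operatorname{Br}(W^o)\to H^1(Z,\BQ/\BZ)\to H^3(L,\BG_{\operatorname{m}})$, using $\operatorname{Br}(L)=\operatorname{Br}(V)=\operatorname{Br}(\BQ)$ and identifying the image of the residue map with the kernel $\langle\chi_{\Delta_F}\rangle$ of the Gysin map; this is closer to the route of \cite{CT-Xu}.)

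Finally I would check that $(\Delta_F,g)$ generates. It lies in $\operatorname{Br}(W^o)$: the only possible ramification is along $D:=\{g=0\}\cap W^o$, and since $g$ cuts out the tangent hyperplane of the cone $W$ at a rational point $P$ (a hyperplane through $\boldsymbol 0$), $D$ is geometrically the cone over the two lines of the two rulings of $V$ through the image of $P$, which are conjugate over $\BQ(\sqrt{\Delta_F})$; hence $F|_{g=0}$ is a $\BQ$-irreducible rank-two form, $D$ is reduced and $\BQ$-irreducible with $\sqrt{\Delta_F}\in\BQ(D)$, so the residue $\partial_D(\Delta_F,g)=[\Delta_F]$ is trivial. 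It is non-constant: restricting along a punctured line $\BQ^{\times}P'\subseteq W^o(\BQ)$ with $g(P')\neq 0$ (which exists since $W^o$ is $\BQ$-rational, so $W^o(\BQ)$ is Zariski dense), the class evaluates at $\lambda P'$ to $(\Delta_F,\lambda g(P'))$, which is trivial for $\lambda=g(P')$ but nontrivial for $\lambda$ making $\lambda g(P')$ a non-norm from $\BQ(\sqrt{\Delta_F})$; so $(\Delta_F,g)\notin\operatorname{Br}(\BQ)$. Being of order dividing $2$, it must generate $\operatorname{Br}(W^o)/\operatorname{Br}(\BQ)\cong\BZ/2\BZ$. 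The main obstacle will be the geometric input of the second paragraph---establishing $\operatorname{Br}(W^o_{\overline{\BQ}})=0$, so that no transcendental classes occur, and correctly pinning down $\Pic(W^o_{\overline{\BQ}})$ as a $\operatorname{Gal}_{\BQ}$-module; once this is secured the remainder is cohomological bookkeeping together with the routine residue and evaluation computations for $(\Delta_F,g)$.
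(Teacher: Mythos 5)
The paper does not prove this proposition: it attributes the result to \cite{CT-Xu} (Proposition 5.7) and \cite{Bright-Kok} (Lemma 3.1) and uses it as a black box, so there is no internal argument to compare against. Your proof is therefore best judged on its own merits, and as an outline it is sound: the realisation of $W^o$ as the total space of $\CO_V(-1)$ minus the zero section, the excision/purity computation of $\operatorname{Br}(W^o_{\overline{\BQ}})=0$ and of $\Pic(W^o_{\overline{\BQ}})\cong\BZ$ with the sign action of $\operatorname{Gal}(\BQ(\sqrt{\Delta_F})/\BQ)$, the Hochschild--Serre collapse to $H^1(\BQ,\Pic(W^o_{\overline{\BQ}}))\cong\BZ/2$, and the residue-plus-evaluation argument for $(\Delta_F,g)$ all work. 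Your route is essentially that of Bright--Kok, who compute directly on the cone; Colliot-Th\'el\`ene--Xu instead diagonalise $F$ as $q(x,y,z)=p(t)$ and analyse the resulting conic fibration over $\BA^1_t$, which generalises to higher-degree $p(t)$ but is less transparent here.

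Two details you should secure if this is to be written out. First, the step $\operatorname{Br}(L_{\overline{\BQ}})=\operatorname{Br}(V_{\overline{\BQ}})$ for the line-bundle total space is not a triviality for open varieties; it rests on $\BA^1$-invariance of the (cohomological) Brauer group for regular schemes, together with Zariski descent, or can be replaced by running the Leray spectral sequence for the $\BG_m$-torsor $W^o\to V$ directly. Second, in the residue calculation you should make explicit that the conclusion is insensitive to the multiplicity $v_D(g)$ of $g$ along $D=\{g=0\}\cap W^o$: the residue is $\overline{\Delta_F}^{\,v_D(g)}\in\kappa(D)^\times/(\kappa(D)^\times)^2$, and this vanishes for any $v_D(g)$ precisely because $\sqrt{\Delta_F}\in\kappa(D)$, as $D_{\overline{\BQ}}$ splits into two conjugate planes over $\BQ(\sqrt{\Delta_F})$. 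With those points made precise, the argument is complete and correct.
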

 In the rest of this section, we choose the integral model $\CW^o:=\CW\setminus\overline{\boldsymbol{0}}\subseteq \BA_{\BZ}^4$ defined by $F$, where $\overline{\boldsymbol{0}}$ is the Zariski closure of $\boldsymbol{0}$ in $\BA_{\BZ}^4$. 
For $L\in\BN$ and $\bGamma\in\CW^o(\BZ/L\BZ)$, let us consider (as in \cite{PART1}) the \emph{congruence neighbourhood} (of $\bGamma$ modulo $L$)
$$\CE_{\CW^o}(L,\bGamma):= \prod_{p\mid L}\CE_p(L,\bGamma)\times\prod_{p\nmid L}\CW^o(\BZ_p)\subseteq \CW^o(\widehat{\BZ}),$$ where for every $p\mid L$, $$\CE_p(L,\bGamma):=\{z_p\in\CW^o(\BZ_p): z_p\equiv \bGamma\bmod p^{\operatorname{ord}_p(L)}\}.$$
The family of all congruence neighbourhoods $\left\{\CE_{\CW^o}(L,\bGamma)\right\}_{(L,\bGamma)}$ forms a topological basis for $\CW^o(\widehat{\BZ})$.

Let us fix $g$ an integral linear form defining the tangent plane at a certain integral point of $\CW^o$. If we assume that $W^0(\mathbb{Q})\neq \emptyset$ and $\Delta_F\notin\mathbb{Q}^2$, then by Proposition \ref{prop:BrWo}, the element $(\Delta_F,g)$ has order two and generates the Brauer group $\operatorname{Br}(W^o)/\operatorname{Br}(\BQ)$. Let \begin{equation}\label{eq:evarhof}
	\varrho_f:=\sum_{p<\infty}\operatorname{inv}_p(\Delta_F,g(\cdot)):W^o(\mathbf{A}_\BQ^\infty)\to \frac{1}{2}\BZ/\BZ
\end{equation} be the evaluation map, which is locally constant (see \cite[Corollary 8.2.11 (a)]{Poonen})
Restricting $\varrho_f$ to the compact space $\CW^o(\widehat{\BZ})$, we can choose $L_0\in\BN$ large enough such that $\varrho_f$ is constant on every congruence neighbourhood modulo $L_0$. Upon enlarging $L_0$, we may assume that  $2\Delta_F \mid L_0$ and that $\CW^o\bmod L_0$ is regular.

Let us define the normalised Tagamawa measure $\omega_f$ on $\CW^o(\widehat{\BZ})$ as follows. 
For every non-empty compact open $\CE\subseteq \CW^o(\widehat{\BZ})$ and for every $L$ with $L_0\mid L$, we cover $\CE$ by a finite number of disjoint congruence neighbourhoods modulo $L$: 
$$\CE=\bigsqcup_{\bGamma}\CE_{\CW^o}(L,\bGamma).$$
Then we let \begin{equation}\label{eq:omegaf}
	\omega_f(\CE):=\frac{1}{\BL(2,\chi_0[L])}\sum_{\bGamma}\widetilde{\mathfrak{S}}_{L,\bGamma}(\CW),
\end{equation} where $\widetilde{\mathfrak{S}}_{L,\bGamma}(\CW)$ is defined by \eqref{eq:singser}.

\begin{lemma}\label{lemcomp}
Assume that $W^o(\mathbb{R})\neq\varnothing$. If $\Delta_F>0$, then the real locus $W^o(\mathbb{R})$ consists of one connected component, and if $\Delta_F<0$, then $W^o(\mathbb{R})$ has two connected components over $\mathbb{R}$.
\end{lemma}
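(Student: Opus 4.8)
The plan is to pass to an explicit real diagonal model of $F$ and then read off the topology of the punctured cone by hand. Since $W^o(\BR)\neq\varnothing$ the form $F$ has a nonzero real zero, hence is indefinite over $\BR$; by Sylvester's law of inertia there is $M\in\operatorname{GL}_4(\BR)$ such that $F(M^{-1}\bt)$ equals, up to the harmless replacement $F\mapsto -F$ (which does not alter the zero set), either $t_1^2+t_2^2+t_3^2-t_4^2$ (signature $(3,1)$) or $t_1^2+t_2^2-t_3^2-t_4^2$ (signature $(2,2)$). As $M$ is a linear automorphism of $\BR^4$ fixing $\boldsymbol 0$, it carries $W^o(\BR)$ homeomorphically onto the punctured real cone of the diagonal form, so the number of connected components is unchanged; hence I may assume $F$ is diagonal.

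Next I would pin down the sign of $\Delta_F$. Writing the Gram matrix of $F$ as $M^t D M$ with $D$ diagonal having $4-p$ entries equal to $-1$ and $p$ entries equal to $+1$, one gets $\Delta_F=(\det M)^2\det D$, which has the sign of $(-1)^{4-p}$. Combined with $p\in\{1,2,3\}$ (indefinite with nonzero zeros), this gives $\Delta_F>0\iff$ signature $(2,2)$ and $\Delta_F<0\iff$ signature $(3,1)$ (equivalently $(1,3)$). It then remains to count components in the two diagonal cases.

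For signature $(3,1)$: a real point of $\{t_1^2+t_2^2+t_3^2=t_4^2\}\setminus\{\boldsymbol 0\}$ must have $t_4\neq 0$ (else all coordinates vanish), and the map $\bt\mapsto(t_4,\,|t_4|^{-1}(t_1,t_2,t_3))$ is a homeomorphism onto $(\BR\setminus\{0\})\times S^2$, which has exactly two connected components, distinguished by $\operatorname{sign}(t_4)$; so $\#I=2$. For signature $(2,2)$: setting $r=(t_1^2+t_2^2)^{1/2}=(t_3^2+t_4^2)^{1/2}$, a nonzero real point has $r>0$, and $\bt\mapsto(r,\,r^{-1}(t_1,t_2),\,r^{-1}(t_3,t_4))$ is a homeomorphism onto $\BR_{>0}\times S^1\times S^1$, which is connected; so $\#I=1$. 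This proves the lemma.

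I do not expect any serious obstacle here: the only points requiring a little care are that the diagonalising change of variables is over $\BR$ and hence a homeomorphism (so component counts transfer faithfully), and that the signature $(1,3)$ alternative is genuinely the same as $(3,1)$ because negating $F$ leaves $\{F=0\}$ untouched; everything else is elementary point-set topology.
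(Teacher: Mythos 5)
Your proof is correct. The approach is essentially the same as the paper's — reduce over $\BR$ to a standard form and read off the topology of the punctured cone — but you use a fully diagonal model $\pm t_1^2\pm t_2^2\pm t_3^2\pm t_4^2$, whereas the paper uses the Witt decomposition $t_0t_1=F_2(t_2,t_3)$ and then reduces the binary form $F_2$ to $t_2t_3$ or $t_2^2+t_3^2$ according to the sign of $\Delta_F$; the two are equivalent linear models of the same quadratic form. Your version is a bit more self-contained in that it exhibits the explicit homeomorphisms onto $(\BR\setminus\{0\})\times S^2$ and $\BR_{>0}\times S^1\times S^1$, where the paper simply asserts the component counts (and, in the two-component case, also names the quaternion class $(t_0,-1)$ that separates them, which is useful context for the Brauer--Manin discussion but not needed for the lemma itself).
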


\begin{proof}
As $W^o(\mathbb{R})\neq \emptyset$, we may assume after a change of variables that $W^o$ is the punctured affine cone of the non-singular quadratic form
$$t_0t_1=F_2(t_2,t_3)$$
where $F_2(t_2,t_3)$ is a binary quadratic form over $\mathbb{R}$. In particular, if $\Delta_F>0$, then $F_2(t_2,t_3)$ has a non-trivial zero over $\mathbb{R}$, and if $\Delta_F<0$, then $F_2(t_2,t_3)$ is a positive or negative definite quadratic form over $\mathbb{R}$. I.e. after a change of variables, we may assume that $W^o$ is the punctured affine cone of $t_0t_1=t_2t_3$ in the case $\Delta_F>0$ or is given by the punctured affine cone of $t_0t_1=t_2^2+t_3^2$ if $\Delta_F<0$. The real locus of the first one is connected, whereas the second punctured affine cone has two connected components over $\mathbb{R}$, which are split by the Brauer class $(t_1,-1)=(t_0,-1)$.
\end{proof}

If the real locus $W^o(\BR)$ has two connected components, then we write $W_1,W_2$ for these. Note that we have
\begin{align*}
	\operatorname{inv}_\infty(\Delta_F,g)|_{W_1}=\operatorname{inv}_\infty(\Delta_F,g)|_{W_2}=0\quad & \text{ if } \Delta_F>0;\\ 
	\operatorname{inv}_\infty(\Delta_F,g)|_{W_1}=\operatorname{inv}_\infty(\Delta_F,g)|_{W_2}+\frac{1}{2}\quad & \text{ if } \Delta_F<0. 
\end{align*}

Throughout this section, $L$ is considered fixed (i.e. we take $\tau=1$). We omit its dependency in all implied constants. Let $\wtil_B$ be defined by \eqref{eq:weightpm}. 
We shall prove
\begin{theorem}\label{thm:Wocount}
	Assume $L_0\mid L$ and let $\bGamma_0\in\CW^o(\BZ/L\BZ)$. In the case $\Delta_F<0$, assume that the support  of $w_0$ is contained in one of the components $W_j$ for some $1\leq j\leq 2$, and that $W_j\times\CE_{\CW^o}(L,\bGamma_0)\subseteq W^o(\mathbf{A}_\BQ)^{\operatorname{Br}}$.  Then 
	$$\CN_{\CW^o}(\wtil_B;(L,\bGamma_0))= 2\CI(\widehat{w})\omega_f\left(\CE_{\CW^o}(L,\bGamma_0)\right) B^2+O_\varepsilon(B^{1+\varepsilon}),$$ where $\CI(\widehat{w})$ is defined by \eqref{eq:weightedsingintpm}. 
\end{theorem}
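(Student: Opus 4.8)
The plan is to reduce the statement to an explicit evaluation of the extra main term $\CK_{L,\bGamma_0}(\widehat w)$. We begin from Lemma~\ref{le:Wocount}: although it is phrased for $L^2\asymp B^{1-\tau}$ with $0<\tau<1$, the underlying estimate (Theorem~\ref{thm:Llambda}) only requires $L^2=O(B^{1-\tau})$, so with $L$ fixed its Möbius inversion may be run with $\tau$ arbitrarily close to $1$; then $D_0=B^{\frac12(1+\tau-\varepsilon)}\to\infty$ and the error $B^{2-\tau+\varepsilon}/L$ is $O_\varepsilon(B^{1+\varepsilon})$. Recognising the $\bc=\boldsymbol{0}$ contribution as $\CI(\widehat w)\,\omega_f(\CE_{\CW^o}(L,\bGamma_0))$ through \eqref{eq:omegaf} (here one uses $L_0\mid L$), and using the bound $\CK_{L,\bGamma_0}(\widehat w)=O_L(B^\varepsilon)$ from the excerpt together with $\CL(D_0)=\BL(2,\psi_F\chi_0[L])^{-1}+O(D_0^{-1})$, one reaches
\[
\CN_{\CW^o}(\wtil_B;(L,\bGamma_0))=B^2\Bigl(\CI(\widehat w)\,\omega_f(\CE_{\CW^o}(L,\bGamma_0))+\BL(2,\psi_F\chi_0[L])^{-1}\,\CK_{L,\bGamma_0}(\widehat w)\Bigr)+O_\varepsilon(B^{1+\varepsilon}).
\]
It thus suffices to establish the clean identity
\[
\CK_{L,\bGamma_0}(\widehat w)=\epsilon\cdot\BL(2,\psi_F\chi_0[L])\,\CI(\widehat w)\,\omega_f(\CE_{\CW^o}(L,\bGamma_0)),
\]
where $\epsilon=+1$ if $W_j\times\CE_{\CW^o}(L,\bGamma_0)\subseteq W^o(\mathbf{A}_\BQ)^{\operatorname{Br}}$ and $\epsilon=-1$ otherwise; under the theorem's hypothesis $\epsilon=+1$, so the two main terms combine to $2\CI(\widehat w)\,\omega_f(\CE_{\CW^o}(L,\bGamma_0))\,B^2$. (The sign $\epsilon=-1$ gives the $\Xi_{W_j}=0$ branch used later for Theorem~\ref{thm:HLWo}.) We may assume $\varepsilon_{\Delta_F}\mid L$: otherwise $\CB_{L,\blambda}(\bc)=0$ for every $\bc$ by \eqref{eq:CB}, hence $\CK_{L,\bGamma_0}(\widehat w)=0$, a degenerate situation treated on its own.

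To prove the identity I would evaluate $\CK_{L,\bGamma_0}(\widehat w)=\frac1{L^6}\sum\CB_{L,\blambda}(\bc)\CJ_L(\widehat w;\bc)$, the sum running over $\bc\neq\boldsymbol{0}$ with $F^*(\bc)=0$ and the lattice condition \eqref{eq:latticecond}, by treating the arithmetic factor $\CB_{L,\blambda}(\bc)$ and the archimedean factor $\CJ_L(\widehat w;\bc)$ separately. For $\CB_{L,\blambda}(\bc)$: write $\CS_{u,L,\blambda}(x;\bc)$ as a complete Gauss sum over $\balpha$, complete the square, and split the $u$-sum over $u\mid(2\Delta_F L)^\infty$ by the Chinese remainder theorem into an Euler product over $p\mid 2\Delta_F L$; each local factor turns out to be $\sigma_p(\CW;L,\bGamma_0)$ times a convergence factor times a sign, the sign being the local invariant at $p$ of the quaternion Brauer class $(\Delta_F,g)$ of Proposition~\ref{prop:BrWo}, extracted via quadratic Gauss-sum evaluations and reciprocity. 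For $\CJ_L(\widehat w;\bc)$: since $F^*(\bc)=0$, the hyperplane normal to $\bc$ is tangent to the cone $\{F=0\}$, so a change of variables adapted to this tangent hyperplane --- as in Heath-Brown~\cite[\S13]{H-Bdelta} --- evaluates the integral in closed form; grouping the null vectors of $F^*$ into primitive ones $\bc_0$ and their integer multiples $t\bc_0$, the ensuing $t$-sum is the Fourier expansion of a piecewise constant function and reconstructs the real density $\CI(\widehat w)$ together with a sign equal to the archimedean invariant of $(\Delta_F,g)$ restricted to the chosen component $W_j$ of Lemma~\ref{lemcomp}.

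Assembling the two evaluations, $\CK_{L,\bGamma_0}(\widehat w)$ becomes $\CI(\widehat w)$ times a product over all places $\nu$ of local densities of $\CW$, carrying exactly the convergence factors that reassemble them into $\BL(2,\psi_F\chi_0[L])\,\omega_f(\CE_{\CW^o}(L,\bGamma_0))$, weighted by the signs $(-1)^{2\operatorname{inv}_\nu(\Delta_F,g)}$. Their product $(-1)^{2\sum_\nu\operatorname{inv}_\nu(\Delta_F,g)}$ is, by the global reciprocity law, equal to $+1$ precisely when the total invariant $\sum_\nu\operatorname{inv}_\nu$ vanishes on $W_j\times\CE_{\CW^o}(L,\bGamma_0)$, i.e.\ precisely when this set lies in $W^o(\mathbf{A}_\BQ)^{\operatorname{Br}}$, and $-1$ otherwise. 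This yields the identity, and with it the theorem.

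The main obstacle is this last matching step: translating the $\delta$-method output --- a sum of twisted quadratic exponential sums against oscillatory integrals, indexed by integral null vectors of $F^*$ constrained by \eqref{eq:latticecond} --- into the cohomological Brauer--Manin pairing attached to $(\Delta_F,g)$. Concretely one must (i) identify, prime by prime, the $p$-adic Gauss-sum factor of $\CB_{L,\blambda}(\bc)$ with the local Hilbert symbol of $(\Delta_F,g)$ at the correct density normalisation, and (ii) show that for each primitive null vector the sum over its multiples, combined with the real slice integral, reproduces the archimedean invariant. This is precisely the evaluation left open in Lindqvist~\cite{Lindqvist}; the bookkeeping is delicate because $S_{q,L,\blambda}(\bc)$ is no longer multiplicative in $q$ --- which is why the Dirichlet-character twist and the auxiliary quantities $\CA_{q_2,L,\blambda}(\chi;\bc)$ and $\CB_{L,\blambda}(\bc)$ were introduced --- and because the lattice condition couples $\bc$ to $\blambda$.
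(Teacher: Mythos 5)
Your reduction of the theorem to the identity
\[
\CK_{L,\bGamma_0}(\widehat w)=\BL(2,\psi_F\chi_0[L])\,\CI(\widehat w)\,\omega_f\bigl(\CE_{\CW^o}(L,\bGamma_0)\bigr)
\]
is correct and matches Corollary~\ref{co:CK}, and the bookkeeping via Lemma~\ref{le:Wocount} and the limit $\CL(D_0)\to\BL(2,\psi_F\chi_0[L])^{-1}$ is sound. But the heart of the proof --- establishing this identity --- is only outlined as a plan, not carried out: you say you ``would evaluate'' $\CB_{L,\blambda}(\bc)$ as an Euler product of Gauss sums, match each local factor with a Hilbert symbol, and treat the archimedean factor $\CJ_L(\widehat w;\bc)$ by a tangent-hyperplane change of variables, while conceding that this is ``the evaluation left open in Lindqvist'' and that ``the bookkeeping is delicate.'' That is precisely the computation the paper deliberately avoids (see Remark~\ref{rmk:deltanotdiv}, which labels it ``quite tedious''), and none of its steps is verified here. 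In particular you do not address how the completion of the $\bc$-sum over \emph{all} null vectors satisfying the lattice condition \eqref{eq:latticecond} (not only primitive ones scaled by a single parameter) reassembles into a single measure, nor why the combined product of local signs lands on $\pm1$ rather than some intermediate value; these are exactly the places where Lindqvist's analysis stopped. So as written, the proposal has a genuine gap at its central step.

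The paper instead derives the identity for $\CK_{L,\bGamma_0}(\widehat w)$ ``for free'' without any Gauss-sum evaluation, by exploiting the Colliot-Th\'el\`ene--Xu theorem (Theorem~\ref{thm:BrW}) as an input. The mechanism is Lemma~\ref{le:gammaBr}: if $\psi_F(\gamma)=-1$ then $W_j\times\CE_{\CW^o}(L,\gamma\bGamma_0)$ misses the Brauer--Manin set, hence contains no integral points, hence $\CN_{\CW^o}(\wtil_B;(L,\gamma\bGamma_0))=0$ identically. Feeding this vanishing into the asymptotic from Lemmas~\ref{le:CKflip} and~\ref{le:Wocount} forces the $\CK$-term to cancel the singular-series term exactly, which is equivalent to the sought identity; then taking $\gamma=1$ gives the theorem. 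This ``extract the constant from a forced cancellation'' argument is the key idea your proposal misses. It replaces a place-by-place reconciliation of circle-method data with the cohomological pairing by a single application of strong approximation with Brauer--Manin obstruction, and it also automatically yields the $\epsilon=-1$ branch (Remark~\ref{rmk:measureint}) used for Theorem~\ref{thm:HLWo}. Your direct route, if completed, would be independent of the Colliot-Th\'el\`ene--Xu input and would in fact reprove it, which is of interest but a substantially harder undertaking than the one required here.
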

Theorem \ref{thm:Wocount} offers a new proof and a quantitative version of strong approximation with Brauer--Manin obstruction for integral points on $\CW^o$, i.e. Theorem \ref{thm:BrW}. On combining Lemmas \ref{le:CKflip}, \ref{le:Wocount} and Theorem \ref{thm:Wocount}, we arrive at determining the  term $\CK_{L,\bGamma}(\widehat{w})$, and we can deduce cleaner asymptotic formulas for $\CN_{\CW}(\wtil_B;(L,\bGamma))$ in Theorem \ref{thm:Llambda} (without the term $\CK_{L,\bGamma}(\widehat{w})$). This completes the result in \cite[\S5]{Lindqvist}.
\begin{corollary}\label{co:CK}
	Under the assumptions in Theorem \ref{thm:Wocount}, we have
	$$\CK_{L,\bGamma_0}(\widehat{w})=\CI(\widehat{w})	\omega_f\left(\CE_{\CW^o}(L,\bGamma_0)\right)\BL(2,\psi_F\chi_0[L]).$$ Moreover, for every $\gamma\in (\BZ/L\BZ)^\times$, we have
	$$\CN_{\CW}(\wtil_B;(L,\gamma\bGamma_0))=B^2\CI(w)\widetilde{\mathfrak{S}}_{L,\bGamma_0}(\CW)\left(1+\psi_F(\gamma)\frac{\BL(2,\psi_F\chi_0[L])}{\BL(2,\chi_0[L])}\right)+O_\varepsilon(B^{1+\varepsilon}).$$ 
\end{corollary}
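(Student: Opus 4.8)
The plan is to evaluate the single quantity $\CN_{\CW^o}(\wtil_B;(L,\bGamma_0))$ asymptotically in two independent ways and to read off $\CK_{L,\bGamma_0}(\widehat w)$ by matching leading coefficients. The first evaluation is exactly Theorem \ref{thm:Wocount},
$$
\CN_{\CW^o}(\wtil_B;(L,\bGamma_0))= 2\,\CI(\widehat w)\,\omega_f\!\bigl(\CE_{\CW^o}(L,\bGamma_0)\bigr)\, B^2+O_\varepsilon(B^{1+\varepsilon}).
$$
The second is obtained by rerunning the proof of Lemma \ref{le:Wocount} with $L$ fixed: starting from the Möbius inversion $\CN_{\CW^o}(\wtil_B;(L,\bGamma_0))=\sum_{d,(d,L)=1}\mu(d)\,\CN_{\CW}(\wtil_{B/d};(L,\overline{d}\bGamma_0))$, one applies Theorem \ref{thm:Llambda} to the range $d\le D_0$ (permissible since $L^2=O((B/d)^{1-\tau_d'})$ with $\tau_d'$ bounded away from $-1$) and controls $d>D_0$ by lattice-point counting, just as in the proof of Lemma \ref{le:Wocount}. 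The flipping identities $\widetilde{\mathfrak{S}}_{L,\overline{d}\bGamma_0}(\CW)=\widetilde{\mathfrak{S}}_{L,\bGamma_0}(\CW)$ and (Lemma \ref{le:CKflip}) $\CK_{L,\overline{d}\bGamma_0}(\widehat w)=\psi_F(d)\CK_{L,\bGamma_0}(\widehat w)$ then yield
$$
\CN_{\CW^o}(\wtil_B;(L,\bGamma_0))=B^2\!\left(\frac{\widetilde{\mathfrak{S}}_{L,\bGamma_0}(\CW)\,\CI(\widehat w)}{\BL(2,\chi_0[L])}+\CL(\infty)\,\CK_{L,\bGamma_0}(\widehat w)\right)+O_\varepsilon(B^{1+\varepsilon}),
$$
where the tail $\CL(D_0)-\CL(\infty)=O(B^{-1+\varepsilon})$ is absorbed into the error and $\CL(\infty)=\sum_{(d,L)=1}\mu(d)\psi_F(d)d^{-2}=\prod_{p\nmid L}\bigl(1-\psi_F(p)p^{-2}\bigr)=\BL(2,\psi_F\chi_0[L])^{-1}$.

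Comparing the two displays and recalling from \eqref{eq:omegaf} that $\omega_f(\CE_{\CW^o}(L,\bGamma_0))=\widetilde{\mathfrak{S}}_{L,\bGamma_0}(\CW)/\BL(2,\chi_0[L])$, the $\widetilde{\mathfrak{S}}_{L,\bGamma_0}(\CW)/\BL(2,\chi_0[L])$-terms leave a single copy of $\CI(\widehat w)\,\omega_f(\CE_{\CW^o}(L,\bGamma_0))$ matched against $\CK_{L,\bGamma_0}(\widehat w)\BL(2,\psi_F\chi_0[L])^{-1}$, which is precisely the first claimed identity. For the second claim, I feed this value of $\CK_{L,\bGamma_0}(\widehat w)$ back into Theorem \ref{thm:Llambda} applied to $\CN_{\CW}(\wtil_B;(L,\gamma\bGamma_0))$ with $\gamma\in(\BZ/L\BZ)^\times$ (again taking $\tau\to1$, so the error is $O_\varepsilon(B^{1+\varepsilon})$); the same two flipping identities give $\widetilde{\mathfrak{S}}_{L,\gamma\bGamma_0}(\CW)=\widetilde{\mathfrak{S}}_{L,\bGamma_0}(\CW)$ and $\CK_{L,\gamma\bGamma_0}(\widehat w)=\psi_F(\gamma)\CK_{L,\bGamma_0}(\widehat w)$, and substituting produces
$$
\CN_{\CW}(\wtil_B;(L,\gamma\bGamma_0))=B^2\,\CI(\widehat w)\,\widetilde{\mathfrak{S}}_{L,\bGamma_0}(\CW)\!\left(1+\psi_F(\gamma)\frac{\BL(2,\psi_F\chi_0[L])}{\BL(2,\chi_0[L])}\right)+O_\varepsilon(B^{1+\varepsilon}),
$$
as required (with $\CI(\widehat w)=\CI(w)$ in the notation of the statement).

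The argument is essentially bookkeeping once Theorems \ref{thm:Llambda} and \ref{thm:Wocount} are available, so I do not expect a genuine obstacle; the only point deserving care is that Lemma \ref{le:Wocount} is stated for $L^2\asymp B^{1-\tau}$ with $0<\tau<1$ while here $L$ is fixed. This is harmless: Theorem \ref{thm:Llambda} already covers $-1<\tau<1$, so choosing $\tau=1-\varepsilon$ makes all of its error terms of size $O_\varepsilon(B^{1+\varepsilon})$ for fixed $L$, the Möbius inversion and lattice-point bound go through verbatim, and the partial sums $\sum_{d\le D_0}\mu(d)d^{-2}$ and $\sum_{d\le D_0}\mu(d)\psi_F(d)d^{-2}$ converge to $\BL(2,\chi_0[L])^{-1}$ and $\BL(2,\psi_F\chi_0[L])^{-1}$ with remainder $O(D_0^{-1+\varepsilon})=O(B^{-1+\varepsilon})$.
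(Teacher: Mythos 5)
Your proposal is correct and follows essentially the same route as the paper: you compare the two asymptotic evaluations of $\CN_{\CW^o}(\wtil_B;(L,\bGamma_0))$ furnished by Lemma \ref{le:Wocount} and Theorem \ref{thm:Wocount}, solve for $\CK_{L,\bGamma_0}(\widehat{w})$ using the observation $\omega_f(\CE_{\CW^o}(L,\bGamma_0))=\widetilde{\mathfrak{S}}_{L,\bGamma_0}(\CW)/\BL(2,\chi_0[L])$ together with the tail estimate $\CL(D_0)=\BL(2,\psi_F\chi_0[L])^{-1}+O_\varepsilon(B^{-1+\varepsilon})$, and feed the result back into Theorem \ref{thm:Llambda} with the flipping identities of Lemma \ref{le:CKflip}. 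The only point of style is that you are a bit more careful than the paper in replacing the informal ``$\tau=1$'' by $\tau=1-\varepsilon$, which is indeed the correct reading of the hypothesis $-1<\tau<1$ in Theorem \ref{thm:Llambda} and $0<\tau<1$ in Lemma \ref{le:Wocount} when $L$ is held fixed.
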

\begin{proof}
	We start by computing the term $\CL(D_0)$ introduced in Lemma \ref{le:Wocount}:
	$$\sum_{d\leqslant D_0,(d,L)=1}\frac{\mu(d)\psi_F(d)}{d^2}=\BL(2,\psi_F\chi_0[L])^{-1}+O_\varepsilon(B^{-1+\varepsilon}).$$
	The formula for $\CK_{L,\bGamma_0}(\widehat{w})$ follows directly from comparing Lemma \ref{le:Wocount} and Theorem \ref{thm:Wocount}.
	
	To deduce the formula for $\CN_{\CW}(\wtil_B;(L,\gamma\bGamma_0))$ it suffices to combine Theorem \ref{thm:Llambda} with the formula for $\CK_{L,\bGamma_0}(\widehat{w})$ above, and then to use Lemma \ref{le:CKflip}.
\end{proof}
\begin{remark}\label{rmk:measureint}
	Assuming $\Delta_F<0$ and that the support of $w_0$ is contained in  $W_1$ and assuming that $(W_1\times\CE_{\CW^o}(L,\bGamma_0))\cap W^o(\mathbf{A}_\BQ)^{\operatorname{Br}}=\varnothing$, we similarly deduce
	$$\CK_{L,\bGamma_0}(\widehat{w})=-\CI(\widehat{w})	\omega_f\left(\CE_{\CW^o}(L,\bGamma_0)\right)\BL(2,\psi_F\chi_0[L])$$ and the corresponding formula for $\CN_{\CW}(\wtil_B;(L,\gamma\bGamma_0))$. From these formulas we see that the value of $\CK$ is  ``biased'' depending on whether the adelic sets meet the Brauer--Manin locus or not.
\end{remark}

Before preceding to the proof of Theorem \ref{thm:Wocount}, we make the following key observation:
\begin{lemma}\label{le:gammaBr}
Assume $L_0\mid L$ and let $\bGamma_0\in\CW^o(\BZ/L\BZ)$. In the case $\Delta_F<0$, assume that the support  of $w_0$ is contained in one of the components $W_j$ for $1\leq j\leq 2$ and that $W_j\times\CE_{\CW^o}(L,\bGamma_0)\subseteq W^o(\mathbf{A}_\BQ)^{\operatorname{Br}}$.
	In the case $\Delta_F<0$, for every $\gamma\in(\BZ/L\BZ)^\times$, we have
	\begin{align*}
		W_j\times \CE_{\CW^o}(L,\gamma\bGamma_0)\subseteq W^o(\mathbf{A}_\BQ)^{\operatorname{Br}}&\Leftrightarrow \psi_F(\gamma)=1;\\ W_j\times\CE_{\CW^o}(L,\gamma\bGamma_0)\cap W^o(\mathbf{A}_\BQ)^{\operatorname{Br}}=\varnothing&\Leftrightarrow \psi_F(\gamma)=-1.
	\end{align*}
    In the case $\Delta_F>0$ the same conclusions hold with $W_j$ replaced by $W^o(\mathbb{R})$.
\end{lemma}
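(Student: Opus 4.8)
The plan is to evaluate the Brauer--Manin pairing on each product set $W_j\times\CE_{\CW^o}(L,\gamma\bGamma_0)$ and to track how it varies as $\bGamma_0$ is rescaled. By Proposition \ref{prop:BrWo} the group $\operatorname{Br}(W^o)/\operatorname{Br}(\BQ)$ is generated by $(\Delta_F,g)$, so by global reciprocity (which annihilates the contribution of $\operatorname{Br}(\BQ)$) an adelic point $(z_\nu)_\nu$ lies in $W^o(\mathbf{A}_\BQ)^{\operatorname{Br}}$ if and only if $\operatorname{inv}_\infty(\Delta_F,g(z_\infty))+\varrho_f\big((z_p)_p\big)=0$ in $\tfrac12\BZ/\BZ$. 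On the component $W_j$ (resp.\ on all of $W^o(\BR)$ when $\Delta_F>0$) the first term is the constant $a$ recorded in the displays preceding the lemma; and since $L_0\mid L$ the neighbourhood $\CE_{\CW^o}(L,\bGamma)$ refines a congruence neighbourhood modulo $L_0$, on which $\varrho_f$ is constant by the choice of $L_0$, say equal to $b(\bGamma)$. Hence the total invariant is the constant $a+b(\bGamma)\in\{0,\tfrac12\}$ on $W_j\times\CE_{\CW^o}(L,\bGamma)$; as this set is non-empty, it is contained in $W^o(\mathbf{A}_\BQ)^{\operatorname{Br}}$ when $a+b(\bGamma)=0$ and disjoint from it when $a+b(\bGamma)=\tfrac12$, and these two alternatives are exhaustive.

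It then suffices to show that $b(\gamma\bGamma_0)-b(\bGamma_0)$ equals $0$ if $\psi_F(\gamma)=1$ and $\tfrac12$ if $\psi_F(\gamma)=-1$. First I would fix a representative $\mathbf{z}=(z_p)_p\in\CE_{\CW^o}(L,\bGamma_0)$ with $g(z_p)\neq 0$ for every $p\mid L$: this is possible because each $\CE_p(L,\bGamma_0)$ is a non-empty open subset of $\CW^o(\BZ_p)$ while $\{g=0\}$ cuts out a proper closed subvariety of $W^o$, hence a nowhere-dense $p$-adic subset (for $p\nmid L$ the component $z_p$ may be taken arbitrary, using that $p$ is a good prime since $2\Delta_F\mid L$). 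Fix also a positive integer lift of $\gamma\in(\BZ/L\BZ)^\times$, coprime to $L$. Multiplying $z_p$ by $\gamma$ for $p\mid L$ and leaving $z_p$ unchanged for $p\nmid L$ produces a point $\gamma\cdot\mathbf{z}$ which still lies in $\CW^o(\BZ_p)$ for every $p$ (scaling by a unit preserves both the cone equation and non-vanishing modulo $p$) and satisfies $\gamma z_p\equiv\gamma\bGamma_0\bmod p^{\operatorname{ord}_p L}$, hence $\gamma\cdot\mathbf{z}\in\CE_{\CW^o}(L,\gamma\bGamma_0)$. Since $g$ is linear, $g(\gamma z_p)=\gamma g(z_p)$, and by multiplicativity of the Hilbert symbol in its second argument
\[
b(\gamma\bGamma_0)-b(\bGamma_0)=\varrho_f(\gamma\cdot\mathbf{z})-\varrho_f(\mathbf{z})=\sum_{p\mid L}\operatorname{inv}_p(\Delta_F,\gamma).
\]

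To conclude I would apply the product formula $\sum_\nu\operatorname{inv}_\nu(\Delta_F,\gamma)=0$, the sum running over all places. The archimedean term vanishes since $\gamma>0$; for $p\nmid L$ the prime $p$ is odd and $p\nmid\Delta_F$, so $\operatorname{inv}_p(\Delta_F,\gamma)$ is the class attached to $\big(\tfrac{\Delta_F}{p}\big)^{\operatorname{ord}_p\gamma}=\psi_F(p)^{\operatorname{ord}_p\gamma}$, and multiplying over $p\nmid L$ --- which are exactly the primes dividing $\gamma$, since $\gcd(\gamma,L)=1$ --- yields $\psi_F(\gamma)$ by complete multiplicativity. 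Therefore $\sum_{p\mid L}\operatorname{inv}_p(\Delta_F,\gamma)$ is $0$ when $\psi_F(\gamma)=1$ and $\tfrac12$ when $\psi_F(\gamma)=-1$. Combining this with the hypothesis $a+b(\bGamma_0)=0$ gives $a+b(\gamma\bGamma_0)=0$ exactly when $\psi_F(\gamma)=1$ and $a+b(\gamma\bGamma_0)=\tfrac12$ exactly when $\psi_F(\gamma)=-1$, which by the dichotomy of the first paragraph is precisely the asserted pair of equivalences. I expect the rescaling step to be the delicate point: one must verify that $\gamma\cdot\mathbf{z}$ is a genuine integral adelic point of $\CW^o$ landing in the intended congruence neighbourhood, and that moving to a representative with $g$ non-vanishing at the primes dividing $L$ is exactly what licenses the clean splitting of the invariant map into $\sum_{p\mid L}\operatorname{inv}_p(\Delta_F,\gamma)$; the remaining reciprocity bookkeeping is routine.
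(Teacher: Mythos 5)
Your proof is correct and follows essentially the same route as the paper: both arguments reduce to showing that rescaling $\bGamma_0$ by $\gamma$ shifts the finite part of the Brauer--Manin pairing by $\sum_{p\mid L}\operatorname{inv}_p(\Delta_F,\gamma)$, and both then evaluate this via Hilbert reciprocity (your ``product formula'') together with the local computation $\operatorname{inv}_p(\Delta_F,\gamma)\leftrightarrow\big(\tfrac{\Delta_F}{p}\big)^{\operatorname{ord}_p\gamma}$ at the primes $p\mid\gamma$. One small cosmetic plus of your version: you explicitly pick a representative $\mathbf{z}$ with $g(z_p)\neq 0$ at $p\mid L$ before computing, whereas the paper works directly with Hensel lifts $(\bGamma_0)_p$ of $\bGamma_0$ and tacitly relies on $L_0\mid L$ to guarantee the evaluation is well-defined; the mathematical content is the same. (Minor wording quibble: ``$p\nmid L$, which are exactly the primes dividing $\gamma$'' should say that among the primes $p\nmid L$ only those dividing $\gamma$ contribute, since $p\nmid 2\Delta_F\gamma$ forces the local symbol to vanish.)
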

\begin{proof}
		By Hensel's lemma, for every $p\mid L$, we can lift $\bGamma_0$ into $\CW^o(\BZ_p)$ and we denote such a lift by $(\bGamma_0)_p$. By assumption, we have
\begin{equation}\label{eq:Br1}
			\operatorname{inv}_\infty(\Delta_F,g)|_{W_j}+\sum_{p\nmid L} \operatorname{inv}_p(\Delta_F,g)|_{\CW^o(\BZ_p)}+\sum_{p\mid L}\operatorname{inv}_p(\Delta_F,g((\bGamma_0)_p))=0.
\end{equation}
		Now for every $\gamma\in(\BZ/L\BZ)^\times$, we then have $W_j\times \CE_{\CW^o}(L,\gamma\bGamma_0)\subseteq W^o(\mathbf{A}_\BQ)^{\operatorname{Br}}$ if and only if
\begin{equation}\label{eq:Br2}
				\operatorname{inv}_\infty(\Delta_F,g)|_{W_1}+\sum_{p\nmid L} \operatorname{inv}_p(\Delta_F,g)|_{\CW^o(\BZ_p)}+\sum_{p\mid L}\operatorname{inv}_p(\Delta_F,g(\gamma(\bGamma_0)_p))=0.
\end{equation} 
On the other hand, 
$W_j\times \CE_{\CW^o}(L,\gamma\bGamma_0)\cap W^o(\mathbf{A}_\BQ)^{\operatorname{Br}}=\varnothing$ if and only if the right-hand-side of \eqref{eq:Br2} is equal to $\frac{1}{2}$. 
Since $g$ is homogeneous, we have $$\sum_{p\mid L}\operatorname{inv}_p(\Delta_F,g(\gamma(\bGamma_0)_p))=\sum_{p\mid L}\operatorname{inv}_p(\Delta_F,g((\bGamma_0)_p))+\sum_{p\mid L}\operatorname{inv}_p(\Delta_F,\gamma).$$ Given  \eqref{eq:Br1}, the condition \eqref{eq:Br2}  holds (resp. does not hold) if and only if $$\sum_{p\mid L}\operatorname{inv}_p(\Delta_F,\gamma)=0 \quad \left(\text{resp. }\sum_{p\mid L}\operatorname{inv}_p(\Delta_F,\gamma)=\frac{1}{2}\right).$$ 
We choose a representative $\gamma\in \mathbb{Z}$ which is a positive integer, for the residue class $\gamma\in (\mathbb{Z}/L\mathbb{Z})^\times$. Note that  $\operatorname{inv}_p(\Delta_F,\gamma)=0$ for all $p\nmid L\gamma$ by \cite[Corollary 6.9.3]{Poonen} (see also \cite[III.1 Theorem 1]{Serre}). 
By Hilbert's reciprocity law (see \cite[III.2 Theorem 3]{Serre}), we have \begin{equation}\label{eq:hilbert}
    \sum_{p\mid L}\operatorname{inv}_p(\Delta_F,\gamma)=\sum_{p\mid \gamma}\operatorname{inv}_p(\Delta_F,\gamma). 
\end{equation}Since $$\psi_F(\gamma)=\prod_{p\mid \gamma}\left(\frac{\Delta_F}{p}\right)^{\operatorname{ord}_p(\gamma)},$$ (recall that $2\mid L$ and $2\nmid \gamma$) a usual computation of the Hilbert symbol (see \cite[III.1 Theorem 1]{Serre}) shows that 
$$\sum_{p\mid \gamma}\operatorname{inv}_p(\Delta_F,\gamma)=\begin{cases}
	0&\text{ if } \psi_F(\gamma)=1;\\
	\frac{1}{2}&\text{ if } \psi_F(\gamma)=-1.\\
\end{cases}$$ 	
Recalling \eqref{eq:hilbert}, this finishes the proof of Lemma \ref{le:gammaBr}.
\end{proof}

\begin{remark}
	Lemma \ref{le:gammaBr} also recovers the main result of \cite{Bright-Kok} (see also the last remark in \cite{Bright}), where a special form of $F$ defining $W^o$ is investigated in detail, and it is shown that at most half of the multiples of a fixed primitive solution modulo a certain prime can be lifted into integers points of $\CW^o$.
\end{remark}
\begin{proof}[Proof of Theorem \ref{thm:Wocount}]
	We take $\tau=1$ in Theorem \ref{thm:Llambda} so that the error term is $O_\varepsilon(B^{-1+\varepsilon})$. By Lemma \ref{le:gammaBr}, for every $\gamma\in(\BZ/L\BZ)^\times$ with $\psi_F(\gamma)=-1$, one has $	W_j\times\CE_{\CW^o}(L,\gamma\bGamma_0)\cap\CW^o(\BZ)=\varnothing$, so $$\CN_{\CW^o}(\wtil_B;(L,\gamma\bGamma_0))=0.$$
	On the other hand, by Lemmas \ref{le:CKflip} and \ref{le:Wocount} we have
\begin{align*} \CN_{\CW^o}(\wtil_B;(L,\gamma\bGamma_0))=B^2\left(\CI(\widehat{w})\frac{\widetilde{\mathfrak{S}}_{L,\bGamma_0}(\CW)}{\BL(2,\chi_0[L])}+\CL(D_0)\psi_F(\gamma)\CK_{L,\bGamma_0}(\widehat{w})\right)+O_\varepsilon\left(B^{1+\varepsilon}\right),
	\end{align*}

We recall that
$$\CL(D_0)=\BL(2,\psi_F\chi_0[L])^{-1}+O_\varepsilon(B^{-1+\varepsilon}),$$
and hence we obtain
$$\BL(2,\psi_F\chi_0[L])^{-1}\CK_{L,\bGamma_0}(\widehat{w})= \CI(\widehat{w})\frac{\widetilde{\mathfrak{S}}_{L,\bGamma_0}(\CW)}{\BL(2,\chi_0[L])}$$

Again by Lemma \ref{le:Wocount} we deduce that

    $$\CN_{\CW^o}(\wtil_B;(L,\bGamma_0))= 2\CI(\widehat{w})\omega_f\left(\CE_{\CW^o}(L,\bGamma_0)\right) B^2+O_\varepsilon(B^{1+\varepsilon}).\qedhere$$
    
\end{proof}
\subsection{Proof of Theorem \ref{thm:HLWo}}
We recall \eqref{eq:evarhof} and define in the case of $\Delta_F<0$ the functions $$\Xi_{W_i}:=2\mathds{1}_{\varrho_f+\operatorname{inv}_\infty(\Delta_F,g)|_{W_i}=0},\quad i=1,2.$$ 
In the case $\Delta_F>0$ we define 
$$\Xi:=2\mathds{1}_{\varrho_f=0}.$$ 
The result then follows from Theorem \ref{thm:Wocount} by considering neighborhoods $\CE$ of the form  $\CE_{\CW^o}(L,\bGamma_0)$ with $L_0\mid L$. 



\qed
\begin{remark}\label{rmk:noCK}
	According to \cite[p. 159--p. 160]{H-Bdelta} we have \begin{equation}\label{eq:globalcount}
	    \sum_{\bx\in \CW^o(\BZ)} w_B(\bx)\sim \CI(w)\omega_f(\CW^o(\widehat{\BZ})) B^2.
	\end{equation}
	We now explain why in the global counting \eqref{eq:globalcount} such a complication was not observed. Continuing Remark \ref{rmk:deltanotdiv}, we can now easily see that the integral Brauer--Manin set $W^o(\BR)\times \CW^o(\widehat{\BZ})\cap W^o(\mathbf{A}_\BQ)^{\operatorname{Br}}$ has exactly half of the ``volume'' of $W^o(\BR)\times \CW^o(\widehat{\BZ})$. Indeed, assuming first that $\Delta_F>0$, we have $$\Xi_{W_i}= 2\mathds{1}_{\varrho_f=0},\quad i=1,2.$$ Decompose  $$\CW^o(\widehat{\BZ})=\bigsqcup_{\bGamma}\bigsqcup_{\gamma\in (\BZ/L\BZ)^\times}\CE_{\CW^o}(L,\gamma\bGamma)$$ for any $L$ with $ L_0\mid L$ and a family of primitive representatives $\bGamma\bmod L$ which cover $\CV(\BZ/L\BZ)$. The character $\psi_F$ achieves $1$ (resp. $-1$) for exactly half of the elements of $(\BZ/L\BZ)^\times$, which determines completely whether the congruence neighbourhood $\CE_{\CW^o}(L,\gamma\bGamma)$ meets the Brauer--Manin set or not for every such fixed $\bGamma$. 
	Hence only half of the Tamagawa volume of $\CW^o(\widehat{\BZ})$ counts and the factors $2$ compensates such lost.
	As for the case $\Delta_F<0$, for each  fixed $\CE_{\CW^o}(L,\bGamma)$ only one of the two connected component meets the Brauer--Manin set. Hence only half of the ``volume'' of $W^o(\BR)$ counts, which is again compensated by the factor $2$.
\end{remark}

\section*{Acknowledgements}
Part of this work was done at IST Austria, at Georg-August-Universität Göttingen and at Southern University of Science and Technology of China. Their excellent working condition and financial support are greatly appreciated. We thank Yang Cao and Daniel Loughran for helpful discussions. We are very grateful to Roger Heath-Brown for comments on an earlier version of this article. The third author was supported by the University of Bristol and the Heilbronn Institute for Mathematical Research.
	
\end{document}